\newtheorem{thm}{Theorem}
\newtheorem{prop}{Proposition}
\newtheorem{lemma}{Lemma}
\newtheorem{cor}{Corollary}
\newcommand{\spin}{\mathfrak{s}}
\theoremstyle{definition}
\newtheorem{defn}{Definition}
\theoremstyle{remark}
\newtheorem{remark}{Remark}
\newtheorem{example}{Example}
    \def\HSt{%
       \setbox0=\hbox{$\widehat{\mathit{HS}}$}
       \setbox1=\hbox{$\mathit{HS}$}
       \dimen0=1.1\ht0
       \advance\dimen0 by 1.17\ht1
       \smash{\mskip2mu\raise\dimen0\rlap{%
          \begin{turn}{180}
              {$\widehat{\phantom{\mathit{HS}}}$}
           \end{turn}} \mskip-2mu    
                \mathit{HS}
    }{\vphantom{\widehat{\mathit{HS}}}}{}}
    \def\HMt{%
       \setbox0=\hbox{$\widehat{\mathit{HM}}$}
       \setbox1=\hbox{$\mathit{HM}$}
       \dimen0=1.1\ht0
       \advance\dimen0 by 1.17\ht1
       \smash{\mskip2mu\raise\dimen0\rlap{%
          \begin{turn}{180}
              {$\widehat{\phantom{\mathit{HM}}}$}
           \end{turn}} \mskip-2mu    
                \mathit{HM}
    }{\vphantom{\widehat{\mathit{HM}}}}{}}
\newcommand{\HMf}{\widehat{\mathit{HM}}}
    \newcommand{\HSb}{\overline{\mathit{HS}}}
\newcommand{\HSf}{\widehat{\mathit{HS}}}
\newcommand{\Pin}{\mathrm{Pin}(2)}
\newcommand{\Cr}{\mathfrak{C}}
\newcommand{\Rin}{\mathcal{R}}
\newcommand{\ztwo}{\mathbb{F}}
\newcommand{\V}{\mathcal{V}}
\newcommand{\D}{\mathscr{D}}
\newcommand{\Tor}{\mathrm{Tor}}
\newcommand{\Ainf}{\mathcal{A}_{\infty}}
\newcommand{\Cj}{\hat{C}^{\jmath}_{\bullet}}
\newcommand{\x}{\mathbf{x}}
\newcommand{\y}{\mathbf{y}}
\newcommand{\p}{\mathfrak{p}}
\begin{document}

\title{$\text{PIN}(2)$-monopole Floer homology, higher compositions and connected sums} 

\author{Francesco Lin}
\address{Department of Mathematics, Massachusetts Institute of Technology} 
\email{linf@math.mit.edu}

\begin{abstract}
We study the behavior of $\Pin$-monopole Floer homology under connected sums. After constructing a (partially defined) $\Ainf$-module structure on the $\Pin$-monopole Floer chain complex of a three manifold (in the spirit of Baldwin and Bloom's monopole category), we identify up to quasi-isomorphism the Floer chain complex of a connected sum with a version of the $\Ainf$-tensor product of the modules of the summands. There is an associated Eilenberg-Moore spectral sequence converging to the Floer groups of the connected sum whose $E^2$ page is the $\mathrm{Tor}$ of the Floer groups of the summands. We discuss in detail a simple example, and use this computation to show that the $\Pin$-monopole Floer homology of $S^3$ has non trivial Massey products.\end{abstract}

\maketitle

In \cite{Lin} we introduced a set of gauge theoretic invariants of three manifolds named $\Pin$-monopole Floer homology. To each closed connected oriented three manifold $Y$ we associate in a functorial way three groups fitting in a long exact sequence
\begin{equation}\label{longex}
\dots\stackrel{i_*}{\longrightarrow}\HSt_{\bullet}(Y)\stackrel{j_*}{\longrightarrow} \HSf_{\bullet}(Y)\stackrel{p_*}{\longrightarrow} \HSb_{\bullet}(Y)\stackrel{i_*}{\longrightarrow}\dots
\end{equation}
which are read respectively \textit{H-S-to}, \textit{H-S-from} and \textit{H-S-bar}. These are also (relatively) graded topological modules over the ring 
\begin{equation*}
\Rin=\ztwo[[V]][Q]/(Q^3)
\end{equation*}
where $V$ and $Q$ have degree respectively $-4$ and $-1$ and $\ztwo$ is the field with two elements. The construction is the analogue of Manolescu's recent $\Pin$-equivariant Seiberg-Witten Floer homology (\cite{man2}) in the context of Kronheimer and Mrowka's monopole Floer homology (\cite{KM}), see \cite{Lin3} for a more detailed introduction. Unlike Manolescu's approach, which is only effective for rational homology spheres, this definition works for every three manifold (cfr. the recent work \cite{KLS} for a general construction of a version of the Seiberg-Witten spectrum).
\\
\par
Manolescu introduced his new invariants in order to disprove the longstanding Triangulation conjecture. He defined numerical invariants $\alpha,\beta$ and $\gamma$ of homology spheres to show that in the homology cobordism group $\Theta_3^H$ there are no elements of order two with Rokhlin invariant one. This is equivalent to the Triangulation conjecture being false by classic work of Galewski-Stern and Matumoto. We refer the reader to \cite{Man3} for a nice survey of the problem.
\par On the other hand not much more is known about the structure of $\Theta_3^H$. It has been shown in \cite{Fur} that it is not finitely generated but it is not known whether it has any torsion or it splits a $\mathbb{Z}^{\infty}$ summand. An interesting goal is to see whether $\Pin$-monopole Floer homology can be used to say something more precise about the structure of $\Theta_3^H$. The first goal is then to understand its behavior under connected sum, and this is the problem addressed in the present work.
\\
\par
When compared to the case of monopole Floer homology, the results are much more involved. First of all, we define higher operations satisfying the $\mathcal{A}_{\infty}$-relations to enhance the classic module structure. These algebraic structures appear naturally in the Heegaard Floer counterpart of the theory (see \cite{LOT}), and more in general when studying symplectic geometry (see for example \cite{Sei}). An analogous result for usual monopole Floer homology follows from the work in preparation of Baldwin and Bloom \cite{BB}, which actually provides a more refined version at an $\Ainf$-categorical level. Our higher compositions should be thought as the higher compositions of morphisms in their monopole category.
\par
Consider the \textit{from} Floer chain complex $\hat{C}_{\bullet}(Y,\spin)$ associated to a self-conjugate spin$^c$ structure $\spin$. This comes with a natural chain involution $\jmath$, and we denote by $\Cj (Y,\spin)$ the invariant subcomplex. The homology of this subcomplex is the Floer homology group $\HSf_{\bullet}(Y,\spin)$. In our setting, the higher compositions arise when studying special families of metrics and perturbations on cobordisms. Recall that the module structure
\begin{equation*}
\HSf_{\bullet}(Y)\otimes \Rin\rightarrow  \HSf_{\bullet}(Y)
\end{equation*}
is induced by a chain map
\begin{equation*}
m_2: \Cj(Y,2)\rightarrow \Cj(Y)
\end{equation*}
which is obtained by counting monopoles on the cobordism $I\times Y$ with a ball removed (after attaching cylindrical ends). Here 
\begin{equation*}
\Cj(Y,2)\subset \Cj(Y)\otimes \Cj(S^3)
\end{equation*}
is a subcomplex such that the inclusion is a quasi-isomorphism. The motivation is that we are working in a Morse-Bott setting so we need to consider chains which are transverse to the moduli spaces. While the map induced in homology is well defined, the map $m_2$ depends upon many choices including the ball we are removing, the metric and the perturbation. On the other hand any two such choices are chain homotopic. Analogously the ring multiplication
\begin{equation*}
\Rin\otimes \Rin\rightarrow \Rin
\end{equation*}
is induced by a chain map
\begin{equation*}
\mu_2:\Cj(S^3,2)\rightarrow \Cj(S^3)
\end{equation*}
obtained by taking $Y$ to be $S^3$ in the construction above.
\\
\par
We are ready to state our first result. Intuitively speaking, our theorem says that the chain complex $\Cj(S^3)$ has the structure of an $\Ainf$-algebra and $\Cj(Y)$ is an $\Ainf$-module over it. This is slightly imprecise because the various compositions are only defined on a subcomplex, but this heuristic will never be harmful in the present work. We refer the reader unfamiliar with the basic notions of $\Ainf$-algebra to Section \ref{algebra} for a very quick introduction to the subject. For the sake of clarity we state the theorem in a slightly imprecise way, and we refer to Section \ref{higher} for the admissibility conditions on metrics and perturbations we assume.

\begin{thm}\label{ainf}
Suppose we have chosen perturbations so that the maps $m_2$ and $\mu_2$ above are defined. Let $\mu_1$ and $m_1$ denote the differentials. There are suitable choices of data $\mathscr{D}$ (including a sequence of families of metrics and perturbations) so that for $i\geq 3$ there are higher compositions
\begin{align*}
m_i&:\Cj(S^3,i)\rightarrow \Cj(S^3)\\
\mu_i&: \Cj(Y, i)\rightarrow \Cj(S^3)
\end{align*}
defined on subcomplexes
\begin{align*}
\Cj(S^3,i)& \subset \Cj(Y)^{\otimes i}\\
\Cj(Y,i)& \subset \Cj(Y)\otimes \Cj(S^3)^{\otimes (i-1)}
\end{align*}
such that the inclusions are quasi-isomorphisms. The higher compositions respect the $\Ainf$-relations for $\Ainf$-modules over and $\Ainf$-algebra (see Section \ref{algebra} for the exact formulas), and they are independent of the choice of data $\mathscr{D}$, see Theorem \ref{invariance} for the precise statement. The same result holds for the from and bar flavors, and the chain maps $i, j$ and $p$ inducing the long exact sequence relating them can also be enhanced to higher morphisms $\{i_n\}$, $\{j_n\}$ and $\{p_n\}$, satisfying the $\Ainf$-relations for morphisms.
\par
Finally, a suitably decorated cobordism $W$ between $Y_0$ and $Y_1$ induces an $\Ainf$-module homomorphism between $\Cj(Y_0)$ and $\Cj(Y_1)$.
\end{thm}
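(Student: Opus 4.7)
The plan is to construct the higher compositions by counting Seiberg-Witten monopoles on punctured cobordisms $I\times Y$ (or $I\times S^3$) with $i-1$ balls removed, over a family of metrics and perturbations parametrized by the Stasheff associahedron $K_i$ of dimension $i-2$. The associahedron's combinatorial structure is precisely what encodes the $\Ainf$-relations: its codimension-one faces are products $K_r\times K_s$ with $r+s=i+1$, and when one approaches such a face the metrics degenerate into a broken configuration corresponding to the iterated composition $m_r(1\otimes\cdots\otimes m_s\otimes\cdots\otimes 1)$. So I would first set up carefully, following the admissibility conventions of Section \ref{higher}, families $\D=\{\D_i\}$ over $K_i$ of metrics with long cylindrical necks emerging near the boundary strata, together with compatible perturbations of the Seiberg-Witten equations. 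These families must be coherent, in the sense that their restriction to a boundary face $K_r\times K_s$ agrees with the product of previously-chosen families, which is arranged by induction on $i$.

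The second step is the definition of $m_i$ and $\mu_i$ themselves. Given the family $\D_i$, parametrized moduli spaces $\M(\D_i;\mathfrak{a}_0,\ldots,\mathfrak{a}_{i-1})$ are obtained by taking the union, over points of $K_i$, of moduli spaces of monopoles on the punctured cobordism with asymptotic limits $\mathfrak{a}_j$ at the ends (with appropriate book-keeping for the $\jmath$-involution). In the Morse-Bott setting the asymptotic data are chains on the critical submanifolds rather than single critical points, and for the count to make sense the input chains must be transverse to the evaluation maps at the incoming ends; this is exactly the condition that defines the subcomplex $\Cj(Y,i)\subset \Cj(Y)\otimes\Cj(S^3)^{\otimes(i-1)}$ (and similarly for $\Cj(S^3,i)$). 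A standard transversality argument, of the kind already used for the definition of $m_2$ in the paper, shows that the inclusions of these subcomplexes are quasi-isomorphisms. The map $m_i$ (resp.\ $\mu_i$) is then defined on zero-dimensional components of the parametrized moduli space.

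The $\Ainf$-relations are extracted from the codimension-one boundary of the compactification of the one-dimensional parametrized moduli spaces. This boundary splits into two types of contributions: first, boundary coming from the boundary of the parameter space $K_i=\bigcup K_r\times K_s$, which contributes terms of the form $m_r\circ (1^{\otimes\ast}\otimes m_s\otimes 1^{\otimes\ast})$ and the corresponding mixed terms for $\mu$; and second, breaking of trajectories at either the incoming or outgoing ends, which contributes the $m_1$-type terms that make up the differential in the $\Ainf$-relations. Identifying this boundary with the expected sum requires a gluing theorem for broken trajectories in the parametrized setting, and in the Morse-Bott framework an additional boundary contribution appears from chains hitting the boundary of the critical submanifolds; as in the construction of $m_2$ this is absorbed into the chain homotopy data. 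For invariance under the choice of $\D$, the same scheme is run over $K_i\times I$ with interpolating families, giving the higher homotopies $\{h_i\}$ that witness independence up to $\Ainf$-quasi-isomorphism; this is the content of the forthcoming Theorem \ref{invariance}. The $\Ainf$-morphisms $\{i_n\},\{j_n\},\{p_n\}$ are obtained by the same recipe but with parameter spaces modelled on multiplihedra rather than associahedra, or equivalently on $K_{n+1}$ with an extra marked input carrying the level-change information between flavors; the $\Ainf$-relations for morphisms are again read off the codimension-one boundary. Finally, for a decorated cobordism $W:Y_0\to Y_1$, one repeats the construction with $W$ replacing $I\times Y$ (with $i-1$ balls removed from the $Y_1$-side) to obtain a multilinear family of maps that assemble into an $\Ainf$-module homomorphism.

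The principal technical obstacle is ensuring coherent transversality over the entire family $K_i$ while respecting the $\jmath$-involution and the Morse-Bott structure: one must simultaneously achieve transversality of the parametrized moduli spaces, compatibility of the perturbations along the boundary strata of $K_i$ (so that the induction on $i$ actually closes), and transversality of the input chains to all evaluation maps arising in the construction. Handling the reducible locus of the Seiberg-Witten equations in this $\Pin$-equivariant parametrized setting, and checking that no unexpected boundary contributions appear from collisions between chains, interior singularities of the family, and trajectory breaking, is where the bulk of the work lies; the $\Ainf$-relations themselves then follow formally once the moduli-theoretic compactification has been understood.
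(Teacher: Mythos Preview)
Your proposal is essentially correct and follows the same approach as the paper: associahedra $K_i$ parametrize coherent families of metrics/perturbations on the punctured cobordisms, the higher compositions are defined by fibered products with the parametrized moduli spaces (restricted to the transverse subcomplexes), and the $\Ainf$-relations are read off the codimension-one boundary strata, which split into trajectory-breaking terms and terms coming from $\partial K_i$.

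Two small corrections on the bookkeeping of which polytopes go where. First, the maps $\{i_n\},\{j_n\},\{p_n\}$ do \emph{not} require multiplihedra: in the paper they are built from the very same associahedron-parametrized moduli spaces on $W_n$, simply by substituting the formulas for $i,j,p$ (which already involve the block decomposition $C^o\oplus C^u\oplus C^s$) in place of the formula for $\hat m$. Second, invariance under change of data $\mathscr{D}$ is not achieved over $K_i\times I$; the paper constructs the comparison $\Ainf$-morphism $f=\{f_n\}$ using families over the multiplihedron $J_n$, whose boundary combinatorics (faces of type $J_{i_1}\times\cdots\times J_{i_k}\times K_k$ and $J_{n-e+1}\times K_e$) are exactly what produce the $\Ainf$-morphism relations, and then shows two such $f$'s are homotopic via families over $I\times J_n$. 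So the multiplihedra enter for invariance, not for the long-exact-sequence maps.
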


\vspace{0.8cm}

With this additional structure in hand, after making the suitable choices we can define the chain complex
\begin{equation}\label{conn}
\Cj(Y_0,\spin_0)\boxtimes\left(\Cj(Y_1,\spin_1)\right)^{\mathrm{opp}}
\end{equation}
which is (after taking care of the transversality issues as above) a version of the $\Ainf$-tensor product of the two $\Ainf$-modules. Here $\left(\Cj(Y_1,\spin_1)\right)^{\mathrm{opp}}$ denote the opposite module of $\Cj(Y_1,\spin_1)$, which is the left $\Ainf$-module with higher operations defined by reversing the given ones, i.e.
\begin{equation}\label{opposite}
m_n^{\mathrm{opp}}(r_{n-1},\cdots, r_1,\x)=m_n(\x, r_1,\cdots, r_{n-1}).
\end{equation}
The main result of the present paper is the following.

\begin{thm}\label{main}
Given three manifolds $Y_0$ and $Y_1$ equipped with self conjugate spin$^c$ structures $\spin_0$ and $\spin_1$, there is a quasi-isomorphism
\begin{equation*}
\Phi: \Cj(Y_0,\spin_0)\boxtimes\Cj(Y_1,\spin_1)\rightarrow \Cj(Y_0\# Y_1,\spin_0\#\spin_1)\langle-1\rangle
\end{equation*}
where the angle brackets indicate the absolute grading shift. This quasi-isomorphism depends on many choices but any two of them give rise to homotopic maps. Furthermore there is a natural $\Rin$-action on the homology of the right hand side and the isomorphism induces $\Phi$ is an isomorphism of $\Rin$-modules.
\end{thm}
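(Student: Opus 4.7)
The strategy is to construct $\Phi$ as a sum of chain-level maps coming from counts of monopoles on a family of cobordisms, to verify the chain-map property via codimension-one boundaries of parameterized moduli spaces, and then to prove the quasi-isomorphism by comparison of two spectral sequences both abutting to homology with $E^2$ page $\Tor_\Rin(\HSf(Y_0,\spin_0),\HSf(Y_1,\spin_1))$. First I would decompose $\Phi=\sum_{n\geq 0}\Phi_n$ into components
\begin{equation*}
\Phi_n:\Cj(Y_0,\spin_0)\otimes\Cj(S^3)^{\otimes n}\otimes\Cj(Y_1,\spin_1)\longrightarrow\Cj(Y_0\#Y_1,\spin_0\#\spin_1)\langle-1\rangle,
\end{equation*}
defined on subcomplexes whose inclusions are quasi-isomorphisms. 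The map $\Phi_0$ counts $\jmath$-equivariant monopoles on the standard 1-handle connected-sum cobordism $W_0$ from $Y_0\sqcup Y_1$ to $Y_0\#Y_1$, with cylindrical ends attached. For $n\geq 1$, $W_n$ is obtained from $W_0$ by inserting $n$ pair-of-pants $S^3$-pieces along the neck, equipped with families of metrics and perturbations parameterized by the associahedron $K_{n+2}$, analogous to those of Theorem \ref{ainf} defining $m_n$ and $\mu_n$. The codimension-one boundary strata of the parameterized moduli spaces split into monopole breakings at the $Y_0$ and $Y_1$ ends (giving $m_1$-precompositions), at the $S^3$ ends (giving $\mu_1$-precompositions), within $\partial K_{n+2}$ (giving compositions involving higher $m_k,\mu_k$ and lower $\Phi_j$), and at the outgoing end (giving $m_1\circ\Phi_n$); summing over $n$, these contributions reproduce precisely the differential $d_\boxtimes$ of the $\Ainf$-tensor product.

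\medskip

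The main obstacle is proving that $\Phi$ induces an isomorphism in homology. I would compare two spectral sequences. On the source, filtering $\Cj(Y_0,\spin_0)\boxtimes\Cj(Y_1,\spin_1)$ by the number of $\Cj(S^3)$-tensor factors (the standard bar filtration of the $\Ainf$-tensor product) yields $E^2=\Tor_\Rin(\HSf(Y_0,\spin_0),\HSf(Y_1,\spin_1))$. On the target, stretching the $S^3$-neck in $Y_0\#Y_1$ along a family of metrics $g_T$ with $T\to\infty$ produces, via a standard compactness argument for Seiberg-Witten trajectories on a long neck $[0,T]\times S^3$, a chain complex $\jmath$-quasi-isomorphic to $\Cj(Y_0\#Y_1,\spin_0\#\spin_1)$ carrying an analogous bar-type filtration whose $E^2$ page is again $\Tor_\Rin(\HSf(Y_0,\spin_0),\HSf(Y_1,\spin_1))$. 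The technical heart is this neck-stretching analysis: showing that trajectories on the long-necked $Y_0\#Y_1$ converge to matched pairs of trajectories on $Y_0$ and $Y_1$ connected by a cascade of $\Pin(2)$-equivariant trajectories on $\mathbb{R}\times S^3$. By construction $\Phi$ respects both filtrations and induces the canonical identification on $E^0$, so it is a quasi-isomorphism on each $E^r$ and in particular on $E^\infty$.

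\medskip

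Finally, the $\Rin$-action on $\HSf(Y_0\#Y_1,\spin_0\#\spin_1)$ is induced by the chain-level map $m_2$ on $\Cj(Y_0\#Y_1,\spin_0\#\spin_1)$; compatibility with the two (homotopic) $\Ainf$-module $\Rin$-actions on the tensor product is checked by inserting an extra $S^3$-input on the outgoing side of $W_n$ and analyzing the resulting one-parameter families, producing a chain homotopy realizing $\Phi\circ\rho_\boxtimes\simeq\rho_\#\circ\Phi$. The grading shift $\langle-1\rangle$ is read off from the index computation for the 1-handle cobordism, and homotopy-uniqueness of $\Phi$ under changes of data $\mathscr{D}$ follows from the parameterized version of the entire construction, in the spirit of Theorem \ref{invariance}.
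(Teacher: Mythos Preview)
Your construction of $\Phi=\sum_n\Phi_n$ via monopole counts on the connected-sum cobordism $X^{\#}$ with $n$ balls removed, parameterized by associahedra, is essentially the same as the paper's construction of $\Psi$ in Section~\ref{formula} (the paper indexes the family by $K_{n+1}$ rather than $K_{n+2}$, but this is inessential). The chain-map verification via boundary strata also matches.

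Where you diverge is in proving that $\Phi$ is a quasi-isomorphism. The paper does \emph{not} attempt a direct spectral-sequence comparison. Instead it fixes one summand $(Z,\spin_0)$ and regards both sides as functors $\mathscr{F},\mathscr{G}$ on the based surgery precategory $\textsc{sur}_*$, with $\Psi$ a natural transformation between them. It then invokes Theorem~\ref{floerf}: since both functors send surgery triples to exact triangles and $\Psi(S^3)$ is an isomorphism (an easy check, as $\HSf_\bullet(S^3)\cong\Rin$ collapses the Eilenberg--Moore spectral sequence at $E^2$), the five-lemma propagates the isomorphism to all $Y$. The substantive work is verifying that $\mathscr{F}$ is a Floer functor (done by tensoring the entire surgery-triangle package --- the $\Ainf$-nullhomotopy $\{h_i\}$ and the acyclicity of the iterated cone --- with $N=\Cj(Z)^{\mathrm{opp}}$) and that $\Psi$ is natural (via homotopies $K$ built from further associahedral families on $(W_1\#I\times Z)\circ X^{\#}$).

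Your alternative route --- stretching the $S^3$-neck in $Y_0\#Y_1$ to produce a bar-type filtration on the target with the same $E^2$ page --- is a natural idea, but the step you label the ``technical heart'' is precisely what the paper's functorial argument is designed to circumvent. Establishing that trajectories on $\mathbb{R}\times(Y_0\#Y_1)$ with a long $S^3$-neck limit to cascades through $\Pin(2)$-equivariant trajectories on $\mathbb{R}\times S^3$, and that the resulting cascade complex carries a filtration with $E^2\cong\Tor_{\Rin}$, is not a ``standard compactness argument'': the $S^3$ end is entirely reducible and Morse--Bott, the $\jmath$-invariance must be tracked through the limit, and identifying the $E^1$ differential with the bar differential requires a gluing analysis that is nowhere in the literature for this setting. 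Absent that analysis, your argument is circular --- the claimed filtration on the target is essentially equivalent to the theorem itself. The paper's surgery-triangle approach trades this hard local analysis for a global five-lemma, at the cost of having to set up the $\Ainf$-enhancement of the surgery triangle.
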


The module structure is obtained by defining (with the caveats as above) on $\Cj(Y)$ an $\Ainf$-bimodule structure over $\Cj(S^3)$, so that the chain complex \ref{conn} is actually an $\Ainf$-module itself.
\par
By considering a natural filtration associated to this tensor product we prove the existence of the following Eilenberg-Moore spectral sequence.

\begin{cor}\label{EM}
Given three manifolds $Y_0,Y_1$ equipped with self conjugate spin$^c$ structures $\spin_0,\spin_1$ there is a forth quadrant spectral sequence whose $E^2$ page is
\begin{equation*}
E^2_{*,*}\cong \Tor^{\Rin}_{*,*}\left(\HSf_{\bullet}(Y_0,\spin_0),\HSf_{\bullet}(Y_1,\spin_1)\right)
\end{equation*}
converging to $\HSf_{\bullet}(Y_0\# Y_1,\spin_0\#\spin_1)\langle-1\rangle$. 
\end{cor}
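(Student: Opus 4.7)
The plan is to extract the spectral sequence from a length filtration on the $\Ainf$-tensor product model for \eqref{conn} and then transport its abutment across the quasi-isomorphism of Theorem \ref{main}. Model $\Cj(Y_0,\spin_0)\boxtimes \Cj(Y_1,\spin_1)$ (within the admissibility constraints of Theorem \ref{ainf}) as the usual two-sided bar complex
\[
\bigoplus_{n\geq 0}\Cj(Y_0,\spin_0)\otimes \Cj(S^3)[1]^{\otimes n}\otimes \Cj(Y_1,\spin_1),
\]
whose differential is the sum of the internal differentials on the factors, the bar differential built from the $\mu_i$, and the two module actions $m_i$ applied at the left and right ends. Let $F^p$ be the subcomplex generated by tensors with at least $p$ middle factors. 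Each term of the differential either preserves the middle length (the internal differentials) or strictly decreases it (the bar operations and module actions contract tensor words), so $F^\bullet$ is a decreasing, exhaustive, bounded-below filtration of chain complexes.

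The associated graded $E^0$ retains only the internal differentials, so
\[
E^1_{p,*}\cong \HSf_\bullet(Y_0,\spin_0)\otimes \Rin^{\otimes p}\otimes \HSf_\bullet(Y_1,\spin_1),
\]
with the appropriate degree shifts, using the quasi-isomorphism $\Cj(S^3)\simeq \Rin$ and the K\"unneth formula over $\ztwo$. The induced $d^1$ is exactly the reduced bar differential of the graded ring $\Rin$ with the $\Rin$-actions on the two Floer modules grafted at the ends, these actions being those induced on homology by the $m_2$'s which, by Theorem \ref{ainf}, coincide with the usual module structure. Hence $d^1$ computes the homology of the standard two-sided bar resolution of $\Rin$ and
\[
E^2_{*,*}\cong \Tor^{\Rin}_{*,*}\bigl(\HSf_\bullet(Y_0,\spin_0),\HSf_\bullet(Y_1,\spin_1)\bigr).
\]

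For convergence, the filtration is exhaustive and bounded below by $F^0$; within each fixed absolute Floer grading only finitely many bar lengths contribute, since both $V$ and $Q$ have strictly negative degree and $\HSf_\bullet$ has no positive infinite tail in each spin$^c$ component. Thus the spectral sequence converges strongly, and by Theorem \ref{main} the abutment is $\HSf_\bullet(Y_0\# Y_1,\spin_0\#\spin_1)\langle -1\rangle$. The fourth-quadrant shape reflects that the filtration degree $p$ is $\geq 0$ while the internal Floer grading of the tensor factors has no positive tail.

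The main technical obstacle I anticipate is the one already built into the statement of Theorem \ref{ainf}: the higher operations $m_i,\mu_i$ are only defined on the transversality subcomplexes $\Cj(Y,i)$, $\Cj(S^3,i)$. One must check that the bar-length filtered version of $\boxtimes$ can be assembled compatibly with a fixed choice of data $\mathscr{D}$ so that $F^\bullet$ above is actually a filtration by subcomplexes, and that the quasi-isomorphism $\Phi$ of Theorem \ref{main} intertwines this filtration with the trivial one on $\Cj(Y_0\# Y_1,\spin_0\#\spin_1)\langle-1\rangle$ up to filtered chain homotopy. Once these admissibility checks are carried out in the spirit of the preceding sections, the identification of the $E^2$ page with $\Tor$ and of the abutment with the Floer homology of the connected sum is the formal computation sketched above.
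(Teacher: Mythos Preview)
Your approach is essentially the same as the paper's: the corollary is obtained by combining Theorem~\ref{main} with Lemma~\ref{algEM}, which sets up the spectral sequence from the length filtration on the $\Ainf$-tensor product exactly as you describe (the paper uses the increasing filtration $F_n=\bigoplus_{i\le n}$ rather than your decreasing $F^p$, but this is only a convention). Your discussion of convergence and of the transversality caveat is more explicit than the paper's, which simply remarks that the algebraic argument of Lemma~\ref{algEM} goes through verbatim for the modified tensor product; one small overcomplication is your requirement that $\Phi$ intertwine filtrations---this is unnecessary, since the spectral sequence lives entirely on the tensor-product side and $\Phi$ is only used to identify the total homology with $\HSf_\bullet(Y_0\#Y_1,\spin_0\#\spin_1)\langle-1\rangle$.
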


As for the usual Eilenberg-Moore spectral sequence, the $\mathrm{Tor}$ is taken in the category of graded modules, see Section \ref{algebra} for a quick review. The intuition behind this last result comes from the Seiberg-Witten homotopy type for rational homology spheres constructed by Manolescu (\cite{man2}). Roughly speaking, he associates to each pair $(Y_i,\spin_i)$ a $\Pin$-equivariant stable pointed homotopy type $\mathrm{SWF}(Y_i,\spin_i)$ whose $\Pin$-equivariant homology is (conjecturally) the $\Pin$-monopole Floer homology of the pair. In his approach there is an identification between the homotopy type of the connected sum and the smash products of the homotopy types of the two summands
\begin{equation*}
\mathrm{SWF}(Y_0\# Y_1,\spin_0\#\spin_1)\cong \mathrm{SWF}(Y_0,\spin_0)\wedge \mathrm{SWF}(Y_1,\spin_1).
\end{equation*}
This more topological approach has been exploited in \cite{Sto2} to provide many interesting computations for Seifert fibered spaces.
The spectral sequence described in Corollary \ref{EM} is then the Eilenberg-Moore spectral sequence computing the $\Pin$-equivariant homology of the smash product of two spaces from the $\Pin$-equivariant homology of the factors. We refer the reader to \cite{McC} for an introduction to the classical Eilenberg-Moore spectral sequence in algebraic topology.
\par
The analogous result is true in the case of usual monopole Floer homology (by just replacing the invariant Floer complexes with the total ones). In this case the $\Tor$ is taken over $\ztwo[[U]]$. In particular the spectral sequence is much simpler because the ring is a PID, and in fact it collapses at the $E^2$ page. In this sense we recover the connect sum formula for Heegaard Floer homology proved in \cite{OS2} and the one for monopole Floer homology discussed in the unpublished work \cite{BMO}. Our proof is in fact an elaboration of the proof in that work, which we briefly sketch in Section \ref{HMconn}. It is functorial in nature, and relies on the surgery exact triangle (see \cite{KMOS} and \cite{Lin2}). The key idea is that there is a natural cobordism $X^{\hash}$ from $Y_0\coprod Y_1$ to $Y_0\# Y_1$ given by a single one handle attachment, and the isomorphism in Theorem \ref{main} is given by studying the Seiberg-Witten equations on such a cobordism (and the cobordisms obtained by removing balls from it). In the whole discussion we use the \textit{from} version of Floer homology because it is the one which is well behaved with respect to cobordisms with multiple incoming ends. Indeed work of \cite{Blo}, this cobordism $X^{\hash}$ induces a map
\begin{equation*}
\HSf_{\bullet}(X^{\hash}): \HSf_{\bullet}(Y_0,\spin_0)\otimes \HSf_{\bullet}(Y_1,\spin_1)\rightarrow \HSf_{\bullet}(Y_0\#Y_1,\spin_0\# \spin_1)
\end{equation*}
while the corresponding statement for the \textit{to} version is false. In fact we can say something more about the image of this map.

\begin{cor}\label{image}
The image of the map $\HSf_{\bullet}(X^{\hash})$ in $\HSf_{\bullet}(Y_0\#Y_1,\spin_0\# \spin_1)$ corresponds to the sum
\begin{equation*}
\bigoplus_{p\in \mathbb{Z}} E^{\infty}_{p,0}
\end{equation*}
in the limit of the spectral sequence in Theorem \ref{conn}. In particular the Manolescu correction terms $\alpha,\beta,\gamma$ of homology spheres $Y_0,Y_1$ satisfy the following inequalities
\begin{align*}
\alpha(Y_0)+\alpha(Y_1)\geq & \alpha(Y_0\#Y_1)\geq \mathrm{max}\left(\alpha(Y_0)+\gamma(Y_1),\beta(Y_0)+\beta(Y_1)\right)\\
\alpha(Y_0)+\beta(Y_1)\geq & \beta(Y_0\#Y_1)\geq \beta(Y_0)+\gamma(Y_1)\\
\mathrm{min}\left(\alpha(Y_0)+\gamma(Y_1),\beta(Y_0)+\beta(Y_1)\right)\geq & \gamma(Y_0\#Y_1)\geq \gamma(Y_0)+\gamma(Y_1).
\end{align*}
\end{cor}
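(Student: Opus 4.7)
The plan is to first identify the image of $\HSf_\bullet(X^{\hash})$ with the claimed piece of $E^{\infty}$ by a chain-level argument, and then translate this identification into inequalities on correction terms using the $\Rin$-module structure of Floer homology for homology spheres.

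For the identification of the image, the key observation is that both the cobordism map $\HSf_\bullet(X^{\hash})$ and the quasi-isomorphism $\Phi$ of Theorem \ref{main} are built from Seiberg-Witten moduli spaces on $X^{\hash}$. The $\Ainf$-tensor product $\Cj(Y_0,\spin_0) \boxtimes \Cj(Y_1,\spin_1)$ carries a natural bar filtration indexed by the number of $\Cj(S^3)$-tensor factors inserted; the depth-zero piece is just $\Cj(Y_0) \otimes \Cj(Y_1)$, with no intervening algebra factors. The restriction of $\Phi$ to this depth-zero piece is, by construction, the chain-level representative of $\HSf_\bullet(X^{\hash})$ obtained by counting solutions on $X^{\hash}$ with a single ball removed but no additional cylindrical insertions. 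Since the Eilenberg-Moore filtration of Corollary \ref{EM} is induced precisely by this bar filtration, the image of $\HSf_\bullet(X^{\hash})$ is the associated graded piece at depth zero, namely $\bigoplus_p E^{\infty}_{p,0}$.

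For the numerical inequalities, recall that for a homology sphere $Y$ the correction terms $\alpha, \beta, \gamma$ record the gradings at which the three distinguished infinite $V$-towers of $\HSf_\bullet(Y)$ (detected via $p_*$ by $1, Q, Q^2 \in \HSb_\bullet(Y) \cong \ztwo[V,V^{-1}][Q]/(Q^3)$) appear. The upper bounds are produced directly by the cobordism map: since $\HSf_\bullet(X^{\hash})$ is $\Rin$-linear and commutes with $p_*$, the tensor product of distinguished generators of $\HSf_\bullet(Y_0)$ and $\HSf_\bullet(Y_1)$ is sent to a non-torsion class of predictable bar grading in $\HSf_\bullet(Y_0\# Y_1)$, yielding $\alpha(Y_0\# Y_1) \leq \alpha(Y_0) + \alpha(Y_1)$, $\beta(Y_0\# Y_1) \leq \alpha(Y_0) + \beta(Y_1)$, and $\gamma(Y_0\# Y_1) \leq \min(\alpha(Y_0)+\gamma(Y_1), \beta(Y_0)+\beta(Y_1))$. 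The lower bounds arise from the complementary perspective: classes of $\HSf_\bullet(Y_0\# Y_1)$ \emph{outside} the image of $\HSf_\bullet(X^{\hash})$ correspond to entries $E^{\infty}_{p,q}$ with $q \neq 0$, coming from $\Tor_1^{\Rin}$ and $\Tor_2^{\Rin}$. An explicit computation of these $\Tor$ groups in terms of the correction terms of the summands shows that the lowest grading at which a new $V$-tower can appear is at least $\gamma(Y_0) + \gamma(Y_1)$, yielding $\gamma(Y_0\# Y_1) \geq \gamma(Y_0) + \gamma(Y_1)$, with analogous computations for $\alpha$ and $\beta$.

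The main technical obstacle will be controlling differentials in the Eilenberg-Moore spectral sequence. Because $\Rin$ has global dimension two, only $d_2$ and $d_3$ differentials are possible, but these could in principle cancel classes needed for the bounds. For the upper bounds, one must verify that the distinguished classes produced by $\HSf_\bullet(X^{\hash})$ have surviving $V$-towers, which follows from $V$-multiplication being an isomorphism on the bar flavor and naturality of $p_*$ under the cobordism map. For the lower bounds, the constraint $Q^3 = 0$ in $\Rin$ confines the possible sources and targets of higher differentials to a narrow grading window, making a case analysis tractable. Careful bookkeeping of the absolute grading shift $\langle -1\rangle$ from Theorem \ref{main} will be essential throughout.
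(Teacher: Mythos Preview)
Your identification of the image of $\HSf_{\bullet}(X^{\hash})$ with the filtration-zero piece of the $\Ainf$-tensor product is correct and matches the paper's argument exactly: under the quasi-isomorphism $\Phi$, the cobordism chain map is simply the inclusion of $\Cj(Y_0)\otimes\Cj(Y_1)$ into the bar complex, and its image in homology is the bottom step of the Eilenberg--Moore filtration.

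However, your approach to the numerical inequalities contains a genuine error. You assert that $\Rin$ has global dimension two and hence only $d_2$ and $d_3$ can be nonzero. This is false: as Example~\ref{FF} in the paper shows, $\Tor^{\Rin}_{i,*}(\ztwo,\ztwo)$ is nonzero for every $i\geq 0$, so $\Rin$ has infinite global dimension. Consequently there is no a priori bound on the length of differentials in the Eilenberg--Moore spectral sequence, and your proposed ``case analysis in a narrow grading window'' does not get off the ground. The computation in Section~\ref{compute} in fact exhibits nontrivial $d_3$'s appearing in an infinite two-periodic pattern.

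The paper sidesteps the spectral sequence entirely for the inequalities. The key observation is that on the \emph{bar} flavor the cobordism map
\[
\HSb_{\bullet}(X^{\hash}):\HSb_{\bullet}(Y_0)\otimes\HSb_{\bullet}(Y_1)\longrightarrow \HSb_{\bullet}(Y_0\# Y_1)
\]
is, after the appropriate grading shift, nothing but multiplication in $\Rin\cong\ztwo[V^{-1},V]][Q]/(Q^3)$. Since the correction terms are read off from the image of $p_*:\HSf_{\bullet}\to\HSb_{\bullet}$, and $p_*$ is natural under cobordisms, the upper bounds follow immediately by tracking how products of $1,Q,Q^2$ land. The lower bounds then come for free from the duality relations $\alpha(-Y)=-\gamma(Y)$, $\beta(-Y)=-\beta(Y)$, $\gamma(-Y)=-\alpha(Y)$ applied to $-Y_0,-Y_1$. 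No control of higher differentials or $\Tor$ computations is needed.
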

The last statement is also proved in \cite{Sto2} in the Seiberg-Witten Floer homology setting.
\vspace{0.5cm}
Unfortunately the $E^2$ page of the spectral sequence in Theorem \ref{conn} is generally very complicated. For example $\Tor_{*,*}(\ztwo,\ztwo)$ is non trivial in infinitely many bidegrees. Here we consider $\ztwo$ with the trivial $\Rin$-module structure. Nevertheless the spectral sequence can be used in combination with other observations (as the Gysin exact sequence, see \cite{Lin2}) to compute the $\Pin$-monopole Floer homology groups in some simple cases, e.g. the connected sum of two copies of $\Sigma(2,3,11)$. These computations can be used to show the following.

\begin{thm}\label{nonadditive}[See also \cite{Man3}]
No non-trivial linear combination of Manolescu's correction terms $\alpha,\beta,\gamma$ defines a homomorphism on the homology cobordism group $\Theta^H_3$.
\end{thm}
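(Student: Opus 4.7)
The plan is to argue by contradiction. Suppose that $f = a\alpha + b\beta + c\gamma$ defines a homomorphism on $\Theta_3^H$ for some $(a,b,c) \neq (0,0,0)$. My first step is an easy reduction exploiting the orientation-reversing involution on $\Theta_3^H$. Manolescu's correction terms transform as $\alpha(-Y) = -\gamma(Y)$, $\beta(-Y) = -\beta(Y)$, $\gamma(-Y) = -\alpha(Y)$. Since $Y \# (-Y)$ bounds a homology $4$-ball and hence vanishes in $\Theta_3^H$, additivity of $f$ gives $f(Y) + f(-Y) = 0$, which after substitution collapses to
\begin{equation*}
(a - c)\bigl(\alpha(Y) - \gamma(Y)\bigr) = 0 \qquad \text{for all } Y.
\end{equation*}
Exhibiting any single homology sphere with $\alpha \neq \gamma$ (for instance a small Seifert fibered example from \cite{Sto2}) forces $a = c$, so from now on we may assume $f = a(\alpha+\gamma) + b\beta$ with $(a,b) \neq (0,0)$.

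The second step is the real input from the paper: using the explicit computation of $\HSf_\bullet$ for $Y = \Sigma(2,3,11)$ and of $Y \# Y$ announced in the introduction. For $Y$ itself, $\alpha(Y), \beta(Y), \gamma(Y)$ can be extracted from the monopole Floer homology via the Gysin sequence of \cite{Lin2}. For $Y \# Y$ one runs the Eilenberg--Moore spectral sequence of Corollary \ref{EM}: its $E^2$-page $\mathrm{Tor}^{\Rin}\bigl(\HSf_\bullet(Y), \HSf_\bullet(Y)\bigr)$ is computed by resolving $\HSf_\bullet(Y)$ over $\Rin = \ztwo[[V]][Q]/(Q^3)$, and the higher differentials are detected by the nontrivial Massey products in $\HSf_\bullet(S^3)$ produced by the $\Ainf$-structure of Theorem \ref{ainf}. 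Reading off $(\alpha+\gamma)(Y\#Y)$ and $\beta(Y\#Y)$ from the $E^\infty$-page and comparing with $2(\alpha+\gamma)(Y)$ and $2\beta(Y)$ yields a nontrivial linear relation $a \delta_1 + b \delta_2 = 0$.

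A third step supplies a second, linearly independent relation. Two natural candidates are (i) repeating the previous computation with a different Brieskorn sphere whose Floer complex has a different shape (so that the failures of additivity for $(\alpha+\gamma)$ and for $\beta$ are mixed in a genuinely different ratio), or (ii) taking the triple connected sum $Y \# Y \# Y$ and comparing with $3 f(Y)$, using Theorem \ref{main} inductively. Either choice produces a second equation in $(a,b)$; verifying that the two relations are not proportional forces $a = b = 0$, contradicting the standing assumption and completing the proof.

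The hard part is clearly the concrete computation in step two: the $E^2$-page of the Eilenberg--Moore spectral sequence is intrinsically complicated because $\Rin$ is not a PID and has infinite global dimension, so a whole tower of higher differentials a priori contributes. What makes the computation feasible is precisely the content of Theorem \ref{ainf}: the higher compositions on $\Cj(S^3)$ are geometric in origin and, for the specific Brieskorn sphere in question, can be pinned down sufficiently to determine the relevant correction terms. Establishing that those geometric higher products give the numerical offsets $\delta_1, \delta_2$ above, rather than cancelling by accident, is where the argument actually does work.
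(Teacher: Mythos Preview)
Your outline follows the paper's strategy closely: reduce to $a=c$ via orientation reversal, then use $Y=\Sigma(2,3,11)$ and its self-connected sums to obtain further linear constraints on $(a,b)$. The paper carries out exactly your option~(ii) in step three, using $2Y$ and $3Y$. Concretely, $(\alpha,\beta,\gamma)(Y)=(2,0,0)$, $(\alpha,\beta,\gamma)(2Y)=(2,2,0)$, and $(\alpha,\beta,\gamma)(3Y)=(4,2,2)$; the condition $f(2Y)=2f(Y)$ forces $a=b$, and then $f(3Y)=3f(Y)$ forces $a=0$.

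One point where you overshoot: the actual computation of the correction terms for $2Y$ and $3Y$ does \emph{not} require resolving the higher differentials or the Massey products. For $2Y$, the Gysin exact sequence against $\HMf_{\bullet}(2Y)$ narrows the answer to two candidates, and these are distinguished by the rank of $\HSf_3(2Y)$; that rank is read off directly from the placement of groups on the $E^2$-page (the relevant summands lie on antidiagonals where no differential can touch them). For $3Y$, the Gysin sequence alone, combined with a parity observation, suffices to determine the correction terms without invoking Corollary~\ref{EM} at all. The Massey-product analysis you describe is genuine content of the paper, but it is used to establish Corollary~\ref{nonformal} (non-formality of $\HSf_{\bullet}(S^3)$), not Theorem~\ref{nonadditive}. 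So your ``hard part'' paragraph locates the difficulty in the wrong place: the proof of non-additivity is in the end a short numerical check once the two connected-sum computations are in hand, and those computations are lighter than you anticipate.
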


\begin{remark}
As a curiosity, the relation
\begin{equation*}
\lambda(Y)=\chi(\mathit{HS}_{\bullet}(Y))-\alpha(Y)+\beta(Y)-\gamma(Y)
\end{equation*}
holds, where $\lambda$ is the Casson invariant and $HS_{\bullet}(Y)$ is the reduced Floer group (i.e. the kernel of $j_*$). Recall that the latter is finite dimensional and admits an absolute $\mathbb{Z}/2\mathbb{Z}$ grading. In particular, the right hand side is additive. This follows from the analogous statement
\begin{equation*}
\lambda(Y)=\chi(\mathit{HM}_{\bullet}(Y))-d(Y)/2
\end{equation*}
showed in \cite{OSd} (whose proof works verbatim in the monopole case), and the Gysin exact sequence.\end{remark}

The higher compositions have a manifestation at the homological level in terms of Massey products. In Section \ref{massey} we discuss how to interpret higher differentials in the Eilenberg-Moore spectral sequence in terms of these Massey products. This relation has been exploited in more classical problems in algebraic topology, see for example \cite{McC}. A byproduct of our computation is that there are non-vanishing higher differentials in the spectral sequence for the connected sum of two copies of $\Sigma(2,3,11)$. This also implies the following.
\begin{cor}\label{nonformal}
$\HSf_{\bullet}(S^3)$ has nontrivial Massey products.
\end{cor}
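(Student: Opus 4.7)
The plan is to argue by contraposition, using the dictionary developed in Section \ref{massey} that translates higher differentials in the Eilenberg--Moore spectral sequence of Corollary \ref{EM} into Massey products on the $\Ainf$-algebra $\Cj(S^3)$. Concretely, if every Massey product on $\HSf_{\bullet}(S^3)$ vanished, then by Kadeishvili's theorem the transferred $\Ainf$-structure on the homology $\Rin$ would be concentrated in arity two, so $\Cj(S^3)$ would be formal as an $\Ainf$-algebra. In that case the dictionary would force the Eilenberg--Moore spectral sequence to collapse at $E^2$ whenever the $\Ainf$-module structures on both summands are themselves determined by their underlying $\Rin$-module structures.

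The second ingredient is the explicit computation referenced immediately before the corollary: for $Y_0 = Y_1 = \Sigma(2,3,11)$, one can determine $\HSf_{\bullet}(\Sigma(2,3,11))$ completely (for instance by combining the Gysin exact sequence of \cite{Lin2} with the known $S^1$-equivariant answer). The resulting module is rigid enough -- essentially a small, explicit extension of $\Rin$ -- that its $\Ainf$-module structure over $\Cj(S^3)$ is determined by the $\Rin$-action, so any genuine higher operation appearing in the bar resolution must come from the algebra factor rather than from the modules. The actual computation of $\HSf_{\bullet}(\Sigma(2,3,11)\#\Sigma(2,3,11))$, done independently via the surgery exact triangle and the cobordism $X^{\hash}$, then shows that the spectral sequence does not collapse at $E^2$, producing a non-vanishing differential $d_r$ with $r \geq 3$.

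Combining these two ingredients delivers the contradiction: the existence of a higher differential, together with the rigidity of the module structures in this example, forces a non-trivial Massey product on $\HSf_{\bullet}(S^3)$. The main obstacle will be the second step, namely verifying module rigidity for $\HSf_{\bullet}(\Sigma(2,3,11))$ precisely enough to exclude module-side explanations for the higher differential. Once that is in place, I would trace the chain-level witness of $d_r$ backwards through the quasi-isomorphism of Theorem \ref{main} to write down an explicit triple (or higher) Massey bracket on $\Rin = \ztwo[[V]][Q]/(Q^3)$, which exhibits the non-trivial Massey product asserted by the corollary.
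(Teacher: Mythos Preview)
Your contrapositive strategy has two genuine gaps.

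First, the implication ``all Massey products on $\HSf_{\bullet}(S^3)$ vanish $\Rightarrow$ $\Cj(S^3)$ is formal'' is false in general, and Kadeishvili's theorem does not say this. Kadeishvili gives a transferred $\Ainf$-structure on homology whose higher operations $\mu_n$ \emph{refine} Massey products, but the Massey products carry indeterminacy: each $\langle a_1,\dots,a_n\rangle$ is only a coset, and it can contain zero while the underlying $\mu_n$ is nonzero. There are standard examples of non-formal dg-algebras with uniformly vanishing Massey products. So even if you verified that every Massey product on $\Rin$ vanished, you could not conclude that the transferred structure is concentrated in arity two, and hence could not deduce $E^2$-collapse from the algebra side alone.

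Second, the ``module rigidity'' step is doing real work that you have not supplied. The differentials in the Eilenberg--Moore spectral sequence are governed by the $\Ainf$-module structures on $\Cj(\Sigma(2,3,11))$ as well as by the algebra, and a nontrivial $d_r$ could in principle be explained entirely by module-side higher operations even if the algebra were formal. Saying the module is ``a small, explicit extension of $\Rin$'' does not rule this out. In fact the paper has to prove a concrete vanishing result (Lemma~\ref{m3vanish}) for the relevant \emph{triple} module Massey products precisely to pin down that $d_2=0$; only then does the known target force $d_3\neq 0$, and Proposition~\ref{masseyprod} identifies that $d_3$ with a specific fourfold product $\langle Q^2,Q,Q^2,Q\rangle$.

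The paper's route avoids both problems: it never invokes formality. It writes down explicit bar generators, uses Lemma~\ref{m3vanish} to kill $d_2$, compares with the independently computed $\HSf_{\bullet}(2\Sigma(2,3,11))$ to see $d_3\neq 0$, reads off $\langle Q^2,Q,Q^2,Q\rangle = V$ in $\HSb_\bullet(Y)$, and then transfers to $S^3$ by observing that the computation involves only reducibles. If you want to salvage your outline, replace the formality appeal by the direct dictionary of Proposition~\ref{masseyprod}, and replace ``module rigidity'' by an actual vanishing statement for the triple products as in Lemma~\ref{m3vanish}.
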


\vspace{1cm}
\textbf{Plan of the paper.} In Section \ref{algebra} we provide a quick review of the homological algebra involved in the results, namely $\Ainf$-tensor products and the graded $\mathrm{Tor}$ functors. In Section \ref{HMconn} we review the functorial framework in which the classical connected sum formula is proved. In Section \ref{transversality} we discuss a general transversality result for moduli spaces parametrized by manifolds with corners. In Section \ref{higher} we construct the $\Ainf$-module structure on the $\Pin$-monopole Floer chain complex. Building on this, in Section \ref{formula} we prove the connect sum formula in Theorem \ref{main}. In Section \ref{massey} we discuss the relation between higher differentials, the module structure and the Massey products, and in finally in Section \ref{compute} we exhibit some explicit computations.

\vspace{1cm}
\textbf{Acknowledgements.} The author would like to thank John Baldwin, Jonathan Bloom, Lucas Culler, his advisor Tom Mrowka, Ciprian Manolescu, Matthew Stoffregen and Umut Varolgunes for the nice and useful discussions. This work was partially supported by the NSF grant DMS-0805841.

\vspace{1cm}

\section{A quick review of homological algebra}\label{algebra}
In this section we briefly review the homological algebra that could be unfamiliar to people working in Floer homology for three-manifolds. We discuss the definition of the graded $\Tor$ functor that appears in the statement of Corollary \ref{EM}, and the basic notions regarding $\Ainf$-algebra and modules. Regarding the latter, as pointed out in our setting the setup will be slightly more complicated because the constructions will be performed in a geometric fashion, and we will have to deal with transversality issues. On the other hand the homological algebra discussed in the second part of this section motivates our geometric constructions, and the actual formulas will be indeed the same. We will always suppose that our ground ring is $\ztwo$, the field with two elements. This will allow us to forget about sign conventions and flatness issues.
\\
\par
\subsection{The graded $\Tor$ functor}
The graded $\Tor$ is the object that arises as the $E^2$-page of the Eilenberg-Moore spectral sequence. Our treatment will follow Chapter $7$ of \cite{McC}, with less details. Consider a graded algebra $R$ over $\ztwo$, and let $M$ and $N$ be respectively a right and left graded module over $R$. We suppose that $M$ and $N$ are graded by a coset of $\mathbb{Z}$ in $\mathbb{Q}$. From this data we can form the bigraded $R$-module $\Tor^{R}_{*,*}(M,N)$ (where the action of $R$ affects the second grading accordingly) defined as follows. Take a projective resolution of $N$
\begin{equation*}
0\stackrel{\epsilon}{\longleftarrow}N \stackrel{\delta}{\longleftarrow} P^0 \stackrel{\delta}{\longleftarrow} P^1 \stackrel{\delta}{\longleftarrow}\cdots \stackrel{\delta}{\longleftarrow} P^{n-1} \stackrel{\delta}{\longleftarrow} P^n \stackrel{\delta}{\longleftarrow}\cdots
\end{equation*}
where the $P^{n}$ are projective graded modules over $R$ and the maps $\delta$ are grading preserving. In particular for each $j$ we get an exact sequence of $\ztwo$-vector spaces by considering the homogenous components of degree $j$:
\begin{equation*}
0\stackrel{\epsilon}{\longleftarrow}N^j \stackrel{\delta}{\longleftarrow} (P^0)^j \stackrel{\delta}{\longleftarrow} (P^1)^j \stackrel{\delta}{\longleftarrow}\cdots \stackrel{\delta}{\longleftarrow} (P^{n-1})^j \stackrel{\delta}{\longleftarrow} (P^n)^j \stackrel{\delta}{\longleftarrow}\cdots
\end{equation*}
Consider the complex $(P^{\bullet}, \delta)$ given by
\begin{equation*}
P^0 \stackrel{\delta}{\longleftarrow} P^1 \stackrel{\delta}{\longleftarrow}\cdots \stackrel{\delta}{\longleftarrow} P^{n-1} \stackrel{\delta}{\longleftarrow} P^n \stackrel{\delta}{\longleftarrow}\cdots\end{equation*}
which has homology $H(P^{\bullet}, \delta)$ isomorphic to $N$. We can form the graded differential module $(\mathrm{total}(P^{\bullet}), \delta)$ setting
\begin{equation*}
\mathrm{total}(P^{\bullet})^j=\bigoplus_{m+n=j} (P^m)^n
\end{equation*}
which is a differential graded module over $R$. Here the action of $R^i$ sends $(P^m)^n$ to $(P^m)^{n+i}$. For the right $R$-module we can then define the $\Tor$ group as
\begin{equation*}
\Tor^{R}(M,N)=H(M\otimes_{R}\mathrm{total}(P^{\bullet}), \delta)
\end{equation*}
This $R$-module is well defined, meaning that up to isomorphism it is independent of the choice of the proper projective resolution of $N$. It can also be computed by tensoring a graded projective resolutions of $M$ with $N$, or by tensoring two graded projective resolutions. 
\par
The key point is that $\Tor_{R}(M,N)$ is a bigraded object, where the elements coming from $M^m\otimes_{\Gamma}(P^{-i})^n$ have bidegree $(-i, m+n)$. The first degree is the homological degree and the second one is the internal degree.
\vspace{0.5cm}
\begin{example}\label{S2311}
Consider the $\Rin$-module $M$ given by the three towers
\begin{equation*}
\ztwo[[V]]\langle-2\rangle\oplus\ztwo[[V]]\langle-3\rangle\oplus\ztwo[[V]]
\end{equation*}
where the action of $Q$ (which has degree $-1$) is an injection from the first tower to the second and from the second tower to the third. We then define the $\Rin$-modules
\begin{align*}
P_{2n}&=\Rin\langle-3n\rangle \oplus\Rin\langle-3n-2\rangle\\
P_{2n+1}&=\Rin\langle-3n-1\rangle \oplus\Rin\langle-3n-4\rangle.
\end{align*}
Define for each $n\geq 0$ the maps
\begin{align*}
\delta_{2n}&:P_{2n}\rightarrow P_{2n-1}\\
(1,0)&\mapsto (Q^2,0)\\
(0,1)&\mapsto (V,Q)
\end{align*}
and
\begin{align*}
\delta_{2n+1}&:P_{2n+1}\rightarrow P_{2n}\\
(1,0)&\mapsto (Q,0)\\
(0,1)&\mapsto (V,Q^2).
\end{align*}
If we consider $\varepsilon: P_0\rightarrow M$ obtained by sending $(1,0)$ to $(0,0,1)$ and $(1,0)$ to $(1,0,0)$ then the sequence
\begin{equation*}
0\stackrel{\epsilon}{\longleftarrow}M \stackrel{\delta}{\longleftarrow}P^0 \stackrel{\delta}{\longleftarrow} P^1 \stackrel{\delta}{\longleftarrow}\cdots \stackrel{\delta}{\longleftarrow} P^{n-1} \stackrel{\delta}{\longleftarrow} P^n \stackrel{\delta}{\longleftarrow}\cdots
\end{equation*}
is a graded projective resolution of the $\Rin$-module $M$. This projective resolution has the nice property of being two periodic: shifting the first degree by two is the same as lowering the second degree by $-3$. Define the $\Rin$-module $N$ given by 
\begin{equation*}
\ztwo[[V]]\langle-4\rangle\oplus\ztwo[[V]]\langle-1\rangle\oplus\ztwo[[V]]\langle-2\rangle
\end{equation*}
where again the action of $Q$ is an injection from the first tower to the second and from the second to the third. We then have in particular that
\begin{equation*}
\mathrm{ker}d_{2n}\cong N\langle-3n\rangle\quad\text{and}\quad\mathrm{ker}d_{2n+1}\cong M\langle-3-3n\rangle.
\end{equation*}
From this projective resolution of $M$ it is easy to also reconstruct a $2$-periodic graded projective resolution of $N$.
\end{example}

\vspace{0.5cm}

\begin{example}\label{FF}
Consider the trivial $\Rin$-module $\ztwo$ in degree zero. It is straightorward to construct a projective resolution of this module from the discussion above. Indeed we can pick $P_0$ to be $\Rin$ with the map $\varepsilon:\Rin\rightarrow\ztwo$ sending $1$ to the generator of $\ztwo$. The kernel of this map is the module $N\langle-1\rangle$ so we can exploit the construction above. From this we can compute that
\begin{equation*}
\Tor^{\Rin}_{i,j}(\ztwo,\ztwo)\cong \ztwo
\end{equation*}
in the cases 
\begin{itemize}
\item $(i,j)=(0,0)$;
\item $i=2n$ for $n\geq1$ and $j=-3n$ or $-3n-2$;
\item $i=2n+1$ and $j=-1-3n$ or $-4-3n$,
\end{itemize}
and zero otherwise. When thinking about the total degree, $\Tor^{\Rin}_{*,*}(\ztwo,\ztwo)$ has rank two over $\ztwo$ on each degree. The following figure shows the placement of the non trivial summands for $j\leq 6$.

\begin{center}
\begin{tikzpicture}
\matrix (m) [matrix of math nodes,row sep=0.1em,column sep=0.5em,minimum width=0.1em]
  { \ztwo& \cdot & \cdot &\cdot& \cdot& \cdot\\
  \cdot & \ztwo &\cdot & \cdot & \cdot & \cdot\\
  \cdot & \cdot & \cdot & \cdot & \cdot & \cdot\\
  \cdot &\cdot& \ztwo &\cdot &\cdot&\cdot\\
  \cdot & \ztwo & \cdot & \ztwo & \cdot & \cdot\\
  \cdot &\cdot& \ztwo &\cdot &\cdot&\cdot\\
  \cdot &\cdot&\cdot&\cdot&\ztwo&\cdot\\
  \cdot& \cdot&\cdot&\ztwo &\cdot& \ztwo \\
    \cdot &\cdot&\cdot&\cdot&\ztwo&\cdot\\
    \cdot & \cdot & \cdot & \cdot & \cdot & \cdot\\
    \cdot & \cdot & \cdot & \cdot & \cdot & \ztwo\\};
\end{tikzpicture}
\end{center}

Unfortunately the result is infinite dimensional, which makes computations with the Eilenberg-Moore spectral sequence very complicated. Nevertheless it is important to notice that the placement of the non trivial groups allows the spectral sequence to converge to a finite dimensional group: the differentials $d_2,d_3,d_4$ can be non trivial.
\end{example}

\vspace{0.5cm}
There is a nice general way to construct a graded projective resolution of an $R$-module called the \textit{bar construction}. Here we suppose that the ring $R$ is unital and that $R^0$, the homogeneous components in degree $0$, are identified with the ground ring $\ztwo$ (as in the case of $\Rin$), and we define
\begin{equation*}
\bar{R}=\bigoplus_{j>0} R^j.
\end{equation*}
Define
\begin{equation*}
B^{n}(R,N)=R\otimes\underbrace{\bar{R}\otimes\cdots \otimes\bar{R}}_\text{$n$ times}\otimes N,
\end{equation*}
where the tensor products are taken over $\ztwo$. We can write an element of this module as $[\gamma_0\lvert \gamma_1\lvert \cdots \lvert \gamma_n]a$. This is a graded projective module over $R$. These $R$-modules can be assembled in a graded projective resolution by introducing the homological differential
\begin{equation*}
\delta: B^{n}(R,N)\rightarrow B^{n-1}(R,N)
\end{equation*}
given by
\begin{equation*}
\delta([\gamma_0\lvert\gamma_1\lvert  \cdots \lvert \gamma_n]x)=\sum_{i=1}^n [\gamma_0\lvert \gamma_1\lvert \cdots\lvert \gamma_{i-1}\cdot\gamma_i\lvert \gamma_{i+1}\lvert \cdots \lvert \gamma_n] \x+[\gamma_0\lvert \gamma_1\lvert \cdots \lvert \gamma_{n-1}](\gamma_n\cdot \x).
\end{equation*}
One can then check that the complex
\begin{equation*}
0\stackrel{\epsilon}{\longleftarrow}N \stackrel{\delta}{\longleftarrow} B^0(R,N) \stackrel{\delta}{\longleftarrow} B^1(R,N) \stackrel{\delta}{\longleftarrow}\cdots \stackrel{\delta}{\longleftarrow} B^n(R,N) \stackrel{\delta}{\longleftarrow}\cdots,
\end{equation*}
where the map $\varepsilon: \Gamma\otimes_{\ztwo} N\rightarrow N$ is given by the module structure, is a graded projective resolution of the $R$-module $N$. We call this the \textit{bar resolution} of $N$. We can define
\begin{equation*}
B^{n}(M,R, N)= M\otimes_{R} B^{n}(R,N)= M\otimes_{\ztwo}\underbrace{\bar{R}\otimes_{\ztwo}\cdots \otimes_{\ztwo}\bar{R}}_\text{$n$ times}\otimes_{\ztwo} N
\end{equation*}
with differential $1\otimes_{R} \delta$. We have then by definition we have that
\begin{equation*}
H(\mathrm{total}(B^{\bullet}(M,R,N)),\delta)=\Tor^{R}(M,N).
\end{equation*}
This description is particularly interesting in our case because a variant of the bar resolution will arise as the $E^1$-page of the spectral sequence described in Corollary \ref{EM}, see Lemma \ref{algEM}. The observation is that under our assumption the bar construction can be performed by replacing $\bar{R}$ by $R$.

\begin{example}
The groups described in Example \ref{FF} can be described when thought as the homology of the bar resolution. Denote by $\x$ the generator of $\ztwo$. For example the group at $(0,0)$ is generated by the class of $\x[ ]\x$. The groups at $(1,-1)$ and $(1,-4)$ are generated respectively by $\x[Q]\x$ and $\x[V]\x$, while those at $(2,-3)$ and $(2,-5)$ by $\x[Q\lvert Q^2]\x$ and $\x[V\lvert Q]\x+ \x[Q\lvert V]\x$. 
\end{example}

\vspace{0.8cm}

\subsection{Basics on $\Ainf$-algebras and modules} We now provide a quick (and incomplete) discussion of the basics of $\Ainf$-structures, mostly in order to fix notation and adapt the discussion to our case. For a more thorough introduction (including motivations) we refer to \cite{Kel} and \cite{Val}. As above, we will always suppose that our ground ring is $\ztwo$.
\\
\par
An $\Ainf$-algebra $\mathcal{A}$ is a graded vector space $A$ together with maps
\begin{equation*}
\mu_i: A^{\otimes i}\rightarrow A[2-i]
\end{equation*}
defined for each $i\geq 1$ so that the compatibility conditions
\begin{equation}\label{infalg}
\sum_{i+j=n+1}\sum_{l=1}^{n-j+1}\mu_i(a_1,\dots ,a_{l-1}, \mu_j(a_l,\dots ,a_{l+j-1}), a_{l+j},\dots ,a_n)=0
\end{equation}
hold for each $n\geq 1$. Here to simplify the notation we denote $\mu_n(a_1\otimes\cdots\otimes a_n)$ by $\mu_n(a_1,\dots,a_n)$.
In particular $\mu_1$ is a differential, $\mu_2$ is a chain map and $\mu_3$ is a chain homotopy between $\mu_2(\mu_2\otimes 1)$ and $\mu_2(1\otimes \mu_2)$. This implies that the homology $H_*(\mathcal{A})$ is a graded algebra over $\ztwo$, where the multiplication is the one induced by $\mu_2$.
\\
\par
A right $\Ainf$-module $\mathcal{M}$ over $\mathcal{A}$ is a vector space $M$ graded by a coset of $\mathbb{Z}$ in $\mathbb{Q}$ together with maps
\begin{equation*}
m_{i}: M \otimes A^{\otimes i-1}\rightarrow M[2-i]
\end{equation*}
satisfying the compatibility relations
\begin{align}\label{infmod}
0&=\sum_{i+j=n+2} m_i(m_j(\x , a_1,\dots , a_{j-1}), a_j ,\dots , a_n)\\
& +\sum \sum m_i(\x, a_1,\dots , \mu_j(a_l,\dots , a_{l+j-1}),\dots ,a_n)
\end{align}
As above, $m_1$ is a differential, $m_2$ is a chain map and $m_3$ is a chain homotopy between $m_2(m_2\otimes 1)$ and $m_2(1\otimes \mu_2)$. The homology $H_*(\mathcal{M})$ is a graded module over $H(\mathcal{A})$, the multiplication being induced by $m_2$.
\par
The definition of a left $\Ainf$-module is analogous. Given a right $\Ainf$-module $\mathcal{M}$, the opposite module $\mathcal{M}^{\mathrm{opp}}$ is the left $\Ainf$-module obtained by reversing all the operations as in equation (\ref{opposite}).
\\
\par
Even though the higher operations $\mu_i$ are not chain maps for $i\geq 3$, so that they do not define maps in homology, they have partial incarnations on $H_*(\mathcal{A})$ in terms of Massey products. In particular suppose we are given cycles $a_1, a_2$ and $a_3$ of degrees $d_i$ so that
\begin{equation*}
[a_1]\cdot[a_2]=0\quad[a_2]\cdot[a_3]=0.
\end{equation*}
Let $s_1$ and $s_2$ be chains so that
\begin{equation*}
\mu_1(s_1)=\mu_2(a_1,a_2)\quad \mu_1(s_2)=\mu_2(a_2,a_3).
\end{equation*}
We define then the Massey product of $[a_1],[a_2]$ and $[a_3]$ as the homology class
\begin{equation*}
\langle[a_1],[a_2],[a_3]\rangle=[\mu_3(a_1,a_2,a_3)+\mu_2(s_1,a_3)+\mu_2(a_1,s_2)]
\end{equation*}
which is a well defined class in 
\begin{equation*}
H_{d_1+d_2+d_3-1}(\mathcal{A})/([a_1]\cdot H_{d_2+d_3-1}(\mathcal{A})+H_{d_1+d_1-1}(\mathcal{A})\cdot [a_3]).
\end{equation*}
The analogous formulas determine partially defined triple products in the cohomology of a right $\Ainf$-module $\mathcal{M}$.
\par
There are of course incarnations of all the compositions $m_n$ when all the lower degree Massey product vanish in a consistent way. In the present work we will need the case for $n=4$, which we discuss in detail now. Suppose we are cycles $a_1, a_2, a_3$ and $a_4$ so that
\begin{align*}
\mu_2(a_1,a_2)&=\mu_1(s_1)\\ \mu_2(a_2,a_3)&=\mu_1(s_2)\\ \mu_2(a_3,a_4)&=\mu_1(s_3).
\end{align*}
for some chains $s_i$. Suppose furthermore that
\begin{align*}
\mu_3(a_1,a_2,a_3)+\mu_2(s_1,a_3)+\mu_2(a_1,s_2)&=\mu_1(t_1)\\
\mu_3(a_2,a_3,a_4)+\mu_2(s_2,a_4)+\mu_2(a_1,s_3)&=\mu_1(t_2)
\end{align*}
for some $t_1$ and $t_2$. We can then define the homology class of
\begin{align*}
\mu_4(a_1,a_2,a_3,a_4)&+\mu_3(s_1,a_3,a_4)+\mu_3(a_1,s_2,a_4)+\mu_3(a_1,a_2,s_3)\\
&+\mu_2(t_1,a_4)+\mu_2(a_1,t_2)+\mu_2(s_1,s_3).
\end{align*}
The fourfold Massey product, denoted by $\langle[a_1],[a_2],[a_3],[a_4]\rangle$, is the set of homology classes in $H_{d_1+d_2+d_3+d_4-2}(\mathcal{A})$ arising from the construction above.
\\
\par
Given two $\Ainf$-algebras $\mathcal{A}$ and $\mathcal{B}$ with respective operations $\{\mu_k\}$ and $\{\nu_k\}$, an $\Ainf$-morphism $f$ between them consists of a collection of maps
\begin{equation*}
f_n: A^{\otimes n}\rightarrow B[n-1]
\end{equation*}
for each $n\geq1$ satysfying the relations
\begin{align}\label{amorp}
0&=\sum_{k\geq 1}\sum_{i_1+\dots i_k=n} \nu_k\left(f_{i_1}(a_1,\dots, a_{i_1}), f_{i_2}(a_{i_1+1},\dots ,a_{i_1+i_2}) ,\dots , f_{i_k}(a_{i_1+\dots +i_{k-1} +1},\dots, a_{n})\right)\\
&+\sum_{k+l=n+1}\sum_{1\leq j\leq k} f_k(a_1,\dots, \mu_l(a_j,\dots,a_{k+l-1}),\dots ,a_n).
\end{align}
We say that an $\Ainf$-morphism $f$ is nullhomotopic if there are maps
\begin{equation*}
h_j: A^{\otimes j}\rightarrow B[-j]
\end{equation*}
such that 
\begin{align*}
f_n(a_1,\dots, a_n)&=+\sum_{k\geq 1}\sum_{i_1+\dots i_k=n} \nu_k\left(h_{i_1}(a_1,\dots, a_{i_1}),\dots , h_{i_k}(a_{i_1+\dots +i_{k-1} +1},\dots, a_{n})\right)\\
&+\sum_{k+l=n+1}\sum_{1\leq j\leq k} h_k(a_1,\dots, \mu_l(a_j,\dots,a_{k+l-1}),\dots ,a_n),
\end{align*}
and that two maps $f_0$ and $f_1$ are homotopic if their difference is nullhomotopic. The composition of two $\Ainf$-morphisms $g$ and $f$ is given by
\begin{equation*}
(g\circ f)_n=\sum_{k\geq_1} \sum_{i_1+\dots+i_k=n} g_k\left(f_{i_1}(a_1,\dots, a_{i_k}),\dots , f_{i_k}(a_{i_1+\dots+i_{k-1}+1},\dots , a_n)\right)
\end{equation*}
We say that two $\Ainf$-algebras $\mathcal{A}$ and $\mathcal{B}$ are weakly homotopy equivalent if there are maps $f:\mathcal{A}\rightarrow \mathcal{B}$ and $g:\mathcal{B}\rightarrow \mathcal{A}$ so that the compositions $g\circ f$ and $f\circ g$ induce the identity in homology.
\\
\par
Given two right $\Ainf$-modules $\mathcal{M}$ and $\mathcal{M}'$ over $\mathcal{A}$, an $\Ainf$-homomorphism $f$ consists of a collection of maps
\begin{equation*}
f_{i}: M \otimes A^{\otimes i-1}\rightarrow M'[1-i]
\end{equation*}
satisfying the compatibility relations
\begin{align*}
0&=\sum_{i+j=n+1} m'_i(f_j(\x,a_1,\dots, a_{j-1}),\dots , a_{n-1})\\
& +\sum_{i+j=n+1} f_i(m_j(\x,a_1,\dots, a_{j-1}),\dots , a_{n-1})\\
&=\sum_{j=1}^{n-1} \sum_{l=1}^{n-1-j} f_i(\x,a_1,\dots, \mu_j(a_l,\dots , a_{l+j-1}),\dots ,a_{n-1})
\end{align*}
For any module $\mathcal{M}$ the identity morphism $\mathbb{I}$ is defined to be
\begin{align*}
\mathbb{I}_1(\x)&=\x\\
\mathbb{I}_i(\x|a_1|\cdots |a_{i-1}) &=0\text{ for }i>0.
\end{align*}
Given $\Ainf$-morphisms $f$ from $\mathcal{M}$ to $\mathcal{M}'$ and $g$ from $\mathcal{M}'$ to $\mathcal{M}''$ we define their composition $g\circ f$ to be
\begin{equation*}
(g\circ f)_n(\x, a_1,\dots , a_{n-1})=\sum_{i+j=n+1} g_j(f_i(\x,a_1,\dots, a_{i-1}),\dots, a_{n-1})
\end{equation*}
We say that a morphism $f$ is nullhomotopic if there are maps
\begin{equation*}
h_i: M\otimes A^{\otimes i-1}\rightarrow M'[-i]
\end{equation*}
with the property that
\begin{align}\label{ahom}
f_n(\x|a_1,\dots ,a_{n-1})&=\\
&\sum_{i+j=n+1}h_i(m_j(\x,a_1,\cdots , a_{j-1}),\cdots , a_{n-1})\\
&+\sum_{i+j=n+1} m_i'(h_j(\x,a_1,\dots , a_{j-1}),\cdots ,a_{n-1})\\
&+\sum_{i+j=n+1}\sum_{l=1}^{n-j} h_i(\x,a_1,\dots ,\mu_j(a_l,\dots , a_{l+j-1},\dots,a_{n-1}).
\end{align}
Two morphisms $f,g:\mathcal{M}\rightarrow \mathcal{M}'$ are homotopic if their difference is nullhomotopic. Two $\Ainf$-modules $\mathcal{M}$ and $\mathcal{M}'$ are homotopy equivalent if there are morphisms
\begin{equation*}
f:\mathcal{M}\rightarrow \mathcal{M}',\qquad g:\mathcal{M}'\rightarrow \mathcal{M}
\end{equation*}
such that $f\circ g$ and $g\circ f$ are homotopy equivalent to the identity.

\vspace{0.5cm}

Given $\mathcal{M}$ and $\mathcal{N}$, respectively a right and a left $\Ainf$-module over $\mathcal{A}$, we define their $\Ainf$-tensor product as the chain complex $\mathcal{M}\boxtimes\mathcal{N}$ whose underlying graded vector space is given by
\begin{equation*}
\bigoplus_{i=0}^{\infty} M\times (A[1])^{\otimes i} \otimes N
\end{equation*}
and the differential is given by
\begin{align*}
\partial[\x\lvert a_1\lvert\dots \lvert a_n\lvert \y]&:= \sum_{i+1}^{n+1} [m_i(\x, a_1,\dots ,a_{i_1})\lvert\cdots\lvert a_n\lvert\y]\\
& +\sum_{i=1}^n\sum_{l=1}^{n-i+1} [\x\lvert a_1\vert\cdots \lvert\mu_i(a_l,\dots ,a_{l+i-1})\lvert\cdots\lvert a_n\lvert \y]\\
& +\sum [\x| a_1|\cdots | m_i(a_{n-i+2},\dots , a_n,y)].
\end{align*}
Here we use the same notation we adopted for the bar resolution. The main homological algebra result we need is the following.
\begin{lemma}\label{algEM}
Suppose $H_*(\mathcal{A})$ is a unital algebra whose degree zero part is identified with $\ztwo$. Then there is a spectral sequence whose $E^2$-page is isomorphic to
\begin{equation*}
\Tor_{*,*}^{H_*(\mathcal{A})}(H_*(\mathcal{M}),H_*(\mathcal{N}))
\end{equation*}
which converges to $H_*(\mathcal{M}\boxtimes\mathcal{N}$)
\end{lemma}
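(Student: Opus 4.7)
The plan is to introduce the obvious length filtration on $\mathcal{M}\boxtimes\mathcal{N}$ and show that the associated spectral sequence is, on the $E^1$-page, the unreduced bar complex computing $\Tor$. Concretely, let
\begin{equation*}
F_p(\mathcal{M}\boxtimes\mathcal{N}) = \bigoplus_{i\leq p} M\otimes (A[1])^{\otimes i}\otimes N.
\end{equation*}
Inspecting the formula for $\partial$ given just before the statement, every summand applies an operation $m_i$ or $\mu_i$ to $i$ consecutive entries of $[\x|a_1|\cdots|a_n|\y]$ and collapses them to a single entry, so the number of bars decreases by $i-1\geq 0$. Hence $\partial F_p\subset F_p$, and the only terms preserving the filtration degree are those with $i=1$, namely the internal differentials $m_1$ of $\mathcal{M}$, $\mathcal{N}$ and $\mu_1$ of $\mathcal{A}$.

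It follows immediately that $E^0_{p,*}=M\otimes (A[1])^{\otimes p}\otimes N$ equipped with the tensor product differential, so by the K\"unneth formula over $\ztwo$
\begin{equation*}
E^1_{p,*}\cong H_*(\mathcal{M})\otimes (H_*(\mathcal{A})[1])^{\otimes p}\otimes H_*(\mathcal{N}).
\end{equation*}
The induced differential $d^1$ is obtained from the part of $\partial$ that strictly lowers the filtration by exactly one, i.e. from the $m_2$ and $\mu_2$ operations. By the $\Ainf$-relations these descend on $E^1$ to the ordinary products in $H_*(\mathcal{A})$ and its actions on $H_*(\mathcal{M})$ and $H_*(\mathcal{N})$. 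Comparing with the bar-complex formula $\delta([\gamma_0|\cdots|\gamma_n]x)$ recalled earlier, one sees that $(E^1_{*,*},d^1)$ is precisely the \emph{unreduced} bar complex $B^{\bullet}(H_*(\mathcal{M}),H_*(\mathcal{A}),H_*(\mathcal{N}))$. This is where the hypothesis $H_*(\mathcal{A})^0 = \ztwo$ enters: as noted just after the construction of the bar resolution, under this assumption the canonical map from the unreduced to the reduced bar complex is a quasi-isomorphism (the standard contracting homotopy using the unit handles the degenerate simplices). Therefore
\begin{equation*}
E^2_{p,*}\cong \Tor^{H_*(\mathcal{A})}_{p,*}(H_*(\mathcal{M}),H_*(\mathcal{N}))
\end{equation*}
with the claimed bidegree placement.

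For convergence, the filtration is exhaustive and bounded below by $F_{-1}=0$, and $\partial$ is filtration non-increasing, so the classical convergence theorem for increasing filtered chain complexes applies and yields a spectral sequence converging to $H_*(\mathcal{M}\boxtimes\mathcal{N})$ with respect to the induced filtration. The main technical obstacle I expect is not the algebra itself but the bookkeeping: one has to track the degree shifts $[1]$, the internal grading on $A$, $M$, $N$, and the bigrading conventions carefully enough that the resulting spectral sequence sits in the fourth quadrant and is compatible with the absolute grading conventions used throughout the paper, and one should verify that $d^1$ matches the bar differential on the nose (rather than up to a sign or a reordering), which requires a somewhat lengthy but mechanical comparison of the two explicit formulas.
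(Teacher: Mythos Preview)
Your proposal is correct and follows exactly the same approach as the paper: filter $\mathcal{M}\boxtimes\mathcal{N}$ by the number of $A$-tensor factors, so that the $E^1$-page is the unreduced bar complex of $H_*(\mathcal{M})$, $H_*(\mathcal{A})$, $H_*(\mathcal{N})$. In fact your write-up is considerably more detailed than the paper's, which simply states the filtration and asserts the identification of $E^1$ without discussing $d^0$, K\"unneth, or convergence.
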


\begin{proof}
The $\Ainf$ tensor product is naturally filtered by the groups
\begin{equation*}
F_n=\bigoplus_{i=0}^{n} M\times (A[1])^{\otimes i} \otimes N
\end{equation*}
and the $E^1$ page is exactly the tensor product over $\ztwo$ of the (unreduced) bar resolution of $H_*(\mathcal{M})$ with $H_*(\mathcal{N})$.
\end{proof}

From this point of view it is clear that the higher differentials can be described in terms of the higher multiplications of the $\Ainf$-modules involved. Indeed it is classically known that the differentials in the Eilenberg-Moore spectral sequence can be described in term of Massey products, see \cite{McC}.
\\
\par
Finally, suppose that in we are in the case that $\mathcal{M}$ is actually and $\Ainf$-bimodule, i.e. there are maps
\begin{equation*}
m_{i,j}: A^{\otimes i-1}\otimes M\otimes A^{\otimes j-1}\rightarrow M[1-i-j]
\end{equation*}
so that
\begin{align*}
0 &=\sum_{l=1}^{i}\sum_{k=1}^j m_{i-l+1,j-k+1}(a_{i-1}',\dots, m_{l,k}(a_{l-1}',\dots,a_1' \x,a_1,\dots, a_{k-1}) ,\dots a_{j-1})
\\
&+\sum_{k=1}^{j-1}\sum_{l=1}^{j-1-k} m_{i,j-l+1}(a_{i-1}',\dots a_1',\x, \dots,\mu_l(a_k,\dots, a_{k+l-1}),\dots a_{j-1}) \\
&+\sum_{k=1}^{i-1}\sum_{l=1}^{i-1-k} m_{i-l+1,j}(a_{i-1}',\dots ,\mu_l(a_{k+l-1}',\dots, a_k'),\dots \x,\dots, a_{j-1}).
\end{align*}
In particular the usual right $\Ainf$-module operations $m_j$ are given by $m_{1,j}$. Then the $\Ainf$-tensor product $\mathcal{M}\boxtimes\mathcal{N}$ has a natural left $\Ainf$-module structure given by
\begin{equation*}
m_n(a_n,\dots, a_1, [\x\lvert b_1\lvert\cdots\lvert b_k\lvert \y])=\sum_{l=1}^{k+1} [m_{n,l}(a_n,\dots, \x, \dots, b_{l-1})\lvert b_l\lvert \cdots\lvert \y].
\end{equation*}
It is an easy exercise to check that these actually satisfy the $\Ainf$-relations.

\vspace{1cm}

\section{Floer functors and the connected sum formula}\label{HMconn}

In this section we review the content of the unpublished work \cite{BMO}. We discuss a nice functorial picture in which our invariants fit, and discuss how it can be used to prove the connected sum formula for usual monopole Floer homology. Our proof will follow the same approach but will be more complicated from a technical viewpoint, so first discuss the main ideas in a simpler case before dwelling into the details.
\\
\par
We first introduce the categorical framework in which we work.
\begin{defn}
We define the \textit{surgery precategory} \textsc{sur} as follows. It objects are closed connected oriented three manifolds $Y$. Morphisms between $Y_0$ to $Y_1$ are given by cobordisms $W$ which are either $[0,1]\times Y_0$ or obtained from it by a single two handle attachment, up to diffeomorphism.
\end{defn}

The surgery precategory is the natural object on which one studies surgery exact triangles, which we now quickly review. Suppose we are given a three manifold $Z$ with torus boundary, and consider simple closed curves $\mu_i$ for $i=1,2,3$ on $\partial Z$ so that
\begin{equation*}
\mu_i\cdot \mu_{i+1}=-1.
\end{equation*}
The indices here are interpreted cyclically. Let $Y_i$ the manifold obtained from $Z$ by Dehn filling along $\mu_i$. There is a natural cobordism $W_i$ from $Y_i$ to $Y_{i+1}$ given by a single $2$-handle attachment.

\begin{defn}
We say that a triple of three manifolds $Y_0,Y_1,Y_2$ is a \textit{surgery triple} in the surgery precategory \textsc{sur} if it arises from the construction above (up to diffeomorphism).
\end{defn}

It is then proved in \cite{KMOS} that the triangle involving the monopole Floer homology groups
\begin{center}
\begin{tikzpicture}
\matrix (m) [matrix of math nodes,row sep=2em,column sep=1.5em,minimum width=2em]
  {
  \HMt_{\bullet}(Y_2) && \HMt_{\bullet}(Y_3)\\
  &\HMt_{\bullet}(Y_1) &\\};
  \path[-stealth]
  (m-1-1) edge node [above]{$\HMt_{\bullet}(W_2)$} (m-1-3)
  (m-2-2) edge node [left]{$\HMt_{\bullet}(W_1)$} (m-1-1)
  (m-1-3) edge node [right]{$\HMt_{\bullet}(W_3)$} (m-2-2)  
  ;
\end{tikzpicture}
\end{center}
is exact. This motivates the following definition.
\begin{defn}
A \textit{Floer functor} is a functor
\begin{equation*}
\mathscr{F}: \textsc{sur} \rightarrow \mathcal{C},
\end{equation*}
where $\mathcal{C}$ is an abelian category, which sends surgery triples to exact triangles.
\end{defn}

\begin{remark}
The surgery exact triangle in diffeomorphism invariant in the following sense. Suppose we are given a three manifold $Y_1'$ together with a diffeomorphism $\phi$ to $Y_1$. Then we get a canonical isomorphism
\begin{equation*}
\Psi: \HMt_{\bullet}(Y_1')\rightarrow \HMt_{\bullet}(Y_1)
\end{equation*}
obtained from the product $[0,1]\times Y_1'$ by parametrizing the end $\{1\}\times Y_1'$ via $\phi^{-1}$. Similarly we can use this new parametrization to obtain a cobordism $W_1'$ from $Y_1'$ to $Y_2$, and $W_3'$ from $Y_3$ to $Y_1'$. The induced map then fit in the commutative diagram
\begin{center}
\begin{tikzpicture}
\matrix (m) [matrix of math nodes,row sep=2em,column sep=1.5em,minimum width=2em]
  {
  \HMt_{\bullet}(Y_2) && \HMt_{\bullet}(Y_3)\\
  &\HMt_{\bullet}(Y_1) &\\
  &\HMt_{\bullet}(Y_1') &\\};
  \path[-stealth]
  (m-1-1) edge node [above]{$\HMt_{\bullet}(W_2)$} (m-1-3)
  (m-2-2) edge node [left]{$\HMt_{\bullet}(W_1)$} (m-1-1)
  (m-1-3) edge node [right]{$\HMt_{\bullet}(W_3)$} (m-2-2)  
  	      edge node [right]{$\HMt_{\bullet}(W_3')$} (m-3-2)  
  (m-3-2) edge node [right]{$\Phi$} (m-2-2)
  	edge node [left]{$\HMt_{\bullet}(W_1')$} (m-1-1)
  ;
\end{tikzpicture}
\end{center}
Hence even though for many constructions in the upcoming section we will work with three manifolds \textit{not} up to diffeomorphism, for the upcoming result this will be of no harm. 
\end{remark}

The key result regarding Floer functors is the following. It finds its roots in the work of Floer for instanton Floer homology (\cite{BD}).
\begin{thm}[\cite{BB},\cite{BMO}]\label{floerf}
Suppose we are given two Floer functors
\begin{equation*}
\mathscr{F}, \mathscr{G}: \textsc{sur} \rightarrow \mathcal{C},
\end{equation*}
together with a natural transformation
\begin{equation*}
\Psi: \mathscr{F}\rightarrow \mathscr{G}.
\end{equation*}
If $\Psi(S^3)$ induces an isomorphism from $\mathscr{F}(S^3)$ to $\mathscr{G}(S^3)$, then $\Psi(Y)$ is an isomorphism from $\mathscr{F}(Y)$ to $\mathscr{G}(Y)$ for every three manifold $Y$.
\end{thm}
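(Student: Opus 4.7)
The plan is to propagate the isomorphism from the base case $S^3$ to an arbitrary three-manifold $Y$ by induction, using the surgery triangle axiom of Floer functors.

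The starting point is the standard consequence of the 5-lemma: since $\mathscr{F}$ and $\mathscr{G}$ both send each surgery triple $(Y_0,Y_1,Y_2)$ to an exact triangle and $\Psi$ is natural, we obtain a commutative ladder between these triangles, so if $\Psi$ is an isomorphism on any two of the three vertices, it is automatically an isomorphism on the third. By Lickorish--Wallace, every closed connected oriented three-manifold is obtained as integer Dehn surgery $S^3_L$ on some framed link. Define the complexity
\[
c(Y)\;=\;\min_{L}\bigl(n(L)+{\textstyle\sum_i}|p_i|\bigr),
\]
the minimum over all integer surgery presentations $Y=S^3_L$ with $n(L)$ components of framings $p_1,\dots,p_{n(L)}$, and induct on $c(Y)$; the base case $c(Y)=0$ gives $Y=S^3$, handled by hypothesis.

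For the inductive step, fix an optimal presentation $Y=S^3_L$ and a component $K\subset L$ with framing $p$. Writing $\mu,\lambda$ for the meridian and Seifert longitude of $\partial\nu(K)$, the three slopes $p\mu+\lambda$, $(p\pm 1)\mu+\lambda$, and $\mu$, placed in a suitable cyclic order, satisfy the pairwise intersection conditions and define a surgery triple $(Y,Y_{p\pm 1},Y_\infty)$, where $Y_{p\pm 1}$ is the result of shifting the framing of $K$ by $\pm 1$ and $Y_\infty=S^3_{L\setminus K}$ simply omits $K$. When $p\neq 0$, choosing the sign so that $|p\pm 1|<|p|$ yields $c(Y_{p\pm 1})\le c(Y)-1$ and $c(Y_\infty)\le c(Y)-|p|-1$, both strictly less than $c(Y)$; the inductive hypothesis together with the 5-lemma principle then concludes this case.

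The hardest part is the degenerate case in which every component of every optimal presentation has framing $0$: changing a zero framing to $\pm 1$ raises $c$ by one, so no single triangle step reduces complexity. I would handle this by a secondary induction on $n(L)$. For a zero-framed component $K$, the triple $(Y,Y_{+1},Y_\infty)$ still has $Y_\infty=S^3_{L\setminus K}$ with strictly smaller $n$, while $Y_{+1}$, although nominally having the same number of components, often admits a simpler presentation after a Kirby move. For instance, if $L$ is the split $n$-component unlink, then $+1$-surgery on one of its components is $S^3$, so $Y_{+1}=\#^{n-1}(S^1\times S^2)$, collapsing the complexity; a blow-down/handleslide analysis, together with minimality of the original presentation (which for all-zero framings forces $b_1(Y)=n(L)$), then bootstraps the argument in the non-split case. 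The technical crux is verifying that these reductions correspond to chains of morphisms in $\textsc{sur}$ (single $2$-handle attachments) and that the secondary induction genuinely closes; this is where the flexibility of the surgery precategory is essential.
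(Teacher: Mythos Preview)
Your high-level strategy --- the five lemma applied to the ladder of exact triangles, together with the fact that every $3$-manifold can be reached from $S^3$ by iterating surgery triples --- is exactly the paper's strategy. The difference is that the paper does not attempt to prove the generation statement: it packages it as the assertion that the smallest subset $A\subset\mathcal{X}$ which is closed under the two-out-of-three rule and contains $S^3$ is all of $\mathcal{X}$, and cites \cite{BB} and \cite{Cul} for this. Your proposal tries to supply that argument directly via the complexity $c(Y)=\min_L\bigl(n(L)+\sum_i|p_i|\bigr)$, and this is where the gap lies.

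The induction does not close in the all-zero-framings case, and your own example already shows why. Take $K$ a nontrivial knot and $Y=S^3_0(K)$, so $c(Y)=1$. The triple $(Y,Y_{+1},Y_\infty)$ has $Y_\infty=S^3$, which is fine, but $Y_{+1}=S^3_{+1}(K)$ is an integer homology sphere, hence cannot be written as $0$-surgery on any knot; its complexity is therefore at least $2>c(Y)$. So neither the primary induction (which at this stage has only established $c=0$) nor the secondary induction on $n(L)$ (which cannot decrease below $1$ here) gives you $Y_{+1}\in A$. The parenthetical claim that minimality with all-zero framings forces $b_1(Y)=n(L)$ is also false in general: the linking matrix has zero diagonal but need not vanish off-diagonal, so its corank can be strictly less than $n(L)$.

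The underlying obstruction is that the two-out-of-three rule needs \emph{two} known manifolds to produce a third, while a single surgery on a knot in $Y$ gives you only \emph{one} known vertex ($Y$ itself) in the relevant triple. The correct arguments in the literature get the second foothold by a different mechanism --- for instance by first showing $\#^k(S^1\times S^2)\in A$ for all $k$ and then organising the induction so that both neighbours in each triple are already controlled. If you want to make your write-up self-contained you should consult the cited sources for one of these schemes; otherwise, citing the generation fact as the paper does is the cleanest route.
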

\begin{proof}
Following \cite{BB} (see also \cite{Cul}), consider the set $\mathcal{X}$ of diffeomorphism classes of three manifolds. We say that a subset $A\subset\mathcal{X}$ is a subspace if whenever we have a surgery triangle $Y_0,Y_1, Y_2$ so that two of these manifolds lie in $A$ then also the third lies in $A$. Then we have that the smallest subspace containing $S^3$ is the whole $\mathcal{X}$. This fact, together with the five lemma applied iteratively to the exact triangles in $\mathcal{C}$, implies the result.
\end{proof}

\vspace{0.5cm}
Of course the applicability of this result in practical contexts is limited by the fact that we need to have an actual natural trasformation between the two theories. In particular, we cannot use it to provide an axiomatic characterization of monopole Floer homology or its equivalence with other theories satisfying the same axioms (as Heegaard Floer homology). On the other hand it turns out to be very useful to study the connected sum formula in monopole Floer homology, as we now discuss. We first recall the result.
\begin{thm}[\cite{BMO}]\label{connectusual}
Consider two pairs $(Y_0,\spin_0)$ and $(Y_1,\spin_1)$. There is an isomorphism of graded $\ztwo[[U]]$-modules between $\HMf_{\bullet}(Y_0\# Y_1, \spin_0\#\spin_1)$ and the mapping cone of the degree $-1$ map
\begin{equation*}
\HMf_{\bullet}(Y_0,\spin_0)\otimes\HMf_{\bullet}(Y_1,\spin_1)\stackrel{1\otimes U+U\otimes 1}{\longrightarrow}\left(\HMf_{\bullet}(Y_0,\spin_0)\otimes\HMf_{\bullet}(Y_1,\spin_1)  \right)\langle 1\rangle.
\end{equation*}
\end{thm}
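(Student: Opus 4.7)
The strategy is to apply Theorem \ref{floerf} in the abelian category of graded $\ztwo[[U]]$-modules, fixing the pair $(Y_0,\spin_0)$ and viewing both sides of the asserted isomorphism as functors in the variable $(Y_1,\spin_1)$. On objects, set
\begin{equation*}
\mathscr{F}(Y_1) = \HMf_{\bullet}(Y_0 \# Y_1, \spin_0 \# \spin_1),
\end{equation*}
and let $\mathscr{G}(Y_1)$ denote the mapping cone of $1\otimes U + U\otimes 1$ on $\HMf_{\bullet}(Y_0,\spin_0)\otimes \HMf_{\bullet}(Y_1,\spin_1)$. On a two-handle cobordism $W_1:Y_1\to Y_1'$, let $\mathscr{F}(W_1)$ be induced by the boundary connect sum $W_1\natural(I\times Y_0)$ and let $\mathscr{G}(W_1)$ be induced by $1\otimes\HMf_{\bullet}(W_1)$ on both the source and the target of the cone.

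First I would verify that $\mathscr{F}$ and $\mathscr{G}$ are Floer functors. For $\mathscr{F}$, a surgery triple $(Y_1,Y_1',Y_1'')$ coming from Dehn fillings on a torus-boundary manifold $Z$ produces a surgery triple $(Y_0\#Y_1,Y_0\#Y_1',Y_0\#Y_1'')$ from the analogous fillings on $Z \natural (Y_0\setminus B^3)$, and the associated two-handle cobordisms are precisely $W_i \natural(I\times Y_0)$; exactness then follows from the surgery exact triangle of \cite{KMOS}. For $\mathscr{G}$, tensoring the surgery exact triangle for $Y_1$ over $\ztwo$ with $\HMf_{\bullet}(Y_0)$ preserves exactness, and joining two such copies by $1\otimes U + U\otimes 1$ produces a $3\times 3$ diagram of exact triangles whose three mapping cones form an exact triangle by the standard octahedral axiom.

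The natural transformation $\Psi:\mathscr{G}\to\mathscr{F}$ is constructed from the geometry of the one-handle cobordism $X^{\#}:Y_0\coprod Y_1\to Y_0\#Y_1$. Counting monopoles on $X^{\#}$ defines a chain map
\begin{equation*}
m:\hat{C}_{\bullet}(Y_0,\spin_0)\otimes \hat{C}_{\bullet}(Y_1,\spin_1)\to \hat{C}_{\bullet}(Y_0\#Y_1,\spin_0\#\spin_1)
\end{equation*}
inducing $\HMf_{\bullet}(X^{\#})$ on homology. I would then construct a chain homotopy $h$ with the same source and target satisfying
\begin{equation*}
m\circ(1\otimes U + U\otimes 1) = \partial h + h\partial,
\end{equation*}
by counting monopoles over the one-parameter family of cobordisms obtained by removing an interior four-ball from $X^{\#}$ and sliding its position from the $Y_0$-side of the one-handle to the $Y_1$-side. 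The pair $(m,h)$ assembles into a chain map out of the mapping cone of $1\otimes U + U\otimes 1$, giving $\Psi(Y_1)$ on homology; naturality with respect to a two-handle cobordism $W_1$ follows from the composition law for monopole cobordism maps applied to the stacking of $X^{\#}$ with $W_1\natural (I\times Y_0)$.

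To conclude via Theorem \ref{floerf} it remains to check that $\Psi(S^3)$ is an isomorphism. In this case $Y_0\# S^3 = Y_0$, so $\mathscr{F}(S^3) = \HMf_{\bullet}(Y_0,\spin_0)$, while the mapping cone defining $\mathscr{G}(S^3)$ is algebraically identified, via the Koszul resolution of the trivial $\ztwo[[U]]$-module and the explicit form of $\HMf_{\bullet}(S^3)$, with $\HMf_{\bullet}(Y_0,\spin_0)$ up to the expected grading shift. Matching this algebraic identification with the concrete form of $m$ and $h$ on $X^{\#}$ when one incoming end is $S^3$ (where $X^{\#}$ degenerates into the product cobordism together with a capped three-ball) yields the isomorphism. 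The principal technical obstacle is the construction of $h$ and the verification of its defining identity: this requires a careful compactness and gluing analysis for the one-parameter family of moduli spaces on the punctured cobordism, showing that the ends of the one-dimensional strata contribute exactly the four terms in $m\circ(1\otimes U + U\otimes 1) + \partial h + h\partial$. This geometric model is precisely the prototype on which the $\Ainf$-machinery developed in Section \ref{higher} is patterned.
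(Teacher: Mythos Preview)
Your proposal is correct and follows essentially the same approach as the paper's sketch in Section~\ref{HMconn}: fix one summand, define the two Floer functors (connected sum versus mapping cone of $1\otimes U+U\otimes1$), build the natural transformation from the one-handle cobordism $X^{\#}$ together with the chain homotopy coming from a one-parameter family sliding the removed ball across the handle, and finish by checking the case $Y_1=S^3$ and invoking Theorem~\ref{floerf}. The only cosmetic difference is that you have interchanged the names $\mathscr{F}$ and $\mathscr{G}$ relative to the paper's convention.
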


It is not hard to check that the mapping cone of the statement is isomorphic (in a functorial way) as a $\ztwo[[U]]$-module to
\begin{equation*}
\Tor^{\ztwo[[U]]}_*\left(\HMf_{\bullet}(Y_0,\spin_0),\HMf_{\bullet}(Y_1,\spin_1)  \right)\langle -1\rangle
\end{equation*} 
so that this theorem is equivalent to the analogous one in Heegaard Floer homology (\cite{OS2}).
\\
\par
To prove this theorem with the functorial approach discussed above we need to introduce extra decorations in order to make the constructions natural.
\begin{defn}
We define the \textit{based surgery precategory} $\textsc{sur}_*$ as follows. Its objects are triples $(Y, \iota^U,\iota^{\hash})$ where $Y$ is a closed connected oriented three manifolds $Y$ together with smooth embeddings with disjoint images
\begin{equation*}
\iota^U,\iota^{\hash}: B^3\rightarrow Y,
\end{equation*}
where $B^3$ is the closed unit ball.
Given two such objects $(Y_0,\iota^U_0,\iota^{\hash}_0)$ and $(Y_1,\iota^U_1,\iota^{\hash}_1)$, consider the pairs $(W,\iota^U,\iota^{\hash})$ where:
\begin{itemize}
\item $W$ is a cobordism from $Y_0$ to $Y_1$ obtained from $[0,1]\times Y_0$ by a single two handle attachment, so that the handle is attached away from the balls
\begin{equation*}
\{1\}\times \iota^U_0(B)\amalg\{1\}\times \iota^{\hash}_0(B);
\end{equation*}
\item $\iota^U$ is a proper embedding of $[0,1]\times B^3$ in $W$ that agrees with $\iota^U_{0}$ on each $\{t\}\times B^3$ and agrees under the parametrization of the boundary by $Y_1$ with $\iota^U_1$. The same holds for $\iota^{\hash}$.
\end{itemize}
Morphisms between $(Y_0,\iota_0)$ and $(Y_1,\iota_1)$ consist of these pairs up to diffeomorphism commuting with the extra decorations.
\end{defn}
We will use the embedding $\iota^U$ to define the module structure while we will use $\iota^{\hash}$ to perform connected sums. Indeed the key point in the definition of a cobordism category is that the objects are three manifolds (not up to diffeomorphism). In all our construction we will hence make sure that all the three manifolds that arise come with natural identifications with the ones we are starting with.
\\
\par
We will only sketch the construction as we will prove a more involved result later from which this follows. Our approach, which involves families of metrics, is slightly different from that proposed in \cite{BMO}, which is based on moving basepoints. On the other hand, this alternative viewpoint is easier to generalize we are actually interested in.
\par
Consider the diffeomorphism $\varphi$ from $B^3\setminus B^3(1/2)$ to itself obtained by applying the antipodal map at each fix radius. Fix a three manifold $(Z,\jmath)$ there $\jmath$ is an embedding of $B^3$.
Given an object $(Y,\iota^U,\iota^{\hash})$ we can form their connected sum by first removing $\iota^{\hash}(B^3(1/2))$ and $\jmath(B^3(1/2))$ and then identifying $\iota(B^3\setminus B^3(1/2))$ and $\jmath(B^3\setminus B^3(1/2))$ using the diffeomorphism $\varphi$. This yields a manifold, which we simply denote by $Y\hash Z$ by dropping the notation, which is well defined, not only up to diffeomorphism.
\par
Given a based three manifold $(Z,\jmath)$, we can associate to it two Floer functors. First, we have the Floer functor $\mathscr{G}_Z$ associates to the based three manifold $(Y,\iota)$ the Floer group $\HMf_{\bullet}(Y\#Z)$. It is clear that $\mathscr{G}_Z$ maps surgery triples to exact triangles.
\\
\par
Before defining the Floer functor $\mathcal{F}_Z$ we need to review the construction of the $U$ action. Remove from $\mathbb{R}\times Y$ a ball and suppose that the metric near the new boundary component is a cylinder on the standard round one. Fix some additional small perturbation on the additional $S^3$ end so that no irreducible solutions are introduced. The chain complex $\hat{C}_{\bullet}(S^3)$ has trivial differential and is identified as a graded vector space with $\ztwo[[U]]\langle-1\rangle$. In particular the $U$ corresponds to the second negative eigenspace. We can then consider the moduli spaces of solutions to the Seiberg-Witten equations (in the blow-up) on $\mathbb{R}\times Y\setminus B^4$ that converge to $U$ on the additional $S^3$ end. These can be packed in order to define a chain map
\begin{equation*}
\hat{m}(U)_Y:\hat{C}_{\bullet}(Y)\rightarrow\hat{C}_{\bullet}(Y) 
\end{equation*}
that induces the action of $U$ at the homology level. In our case the embedding $\iota^U$ determines a preferred embedded ball obtained by applying the map $\iota^U$ fiberwise to
\begin{equation*}
B^4\subset[-1,1]\times B^3.
\end{equation*}
The same applies for $Z$, so that there is a chain map $\hat{m}(U)_Z$ inducing the action of $U$ at the homology level.
\\
\par
The functor $\mathscr{F}_Z$ is then defined to be the mapping cone of the chain map
\begin{equation*}
\left(1\otimes \hat{m}(U)_Z+\hat{m}(U)_Y\otimes 1\right):\hat{C}_{\bullet}(Y)\otimes\hat{C}_{\bullet}(Z)\rightarrow\left(\hat{C}_{\bullet}(Y)\otimes\hat{C}_{\bullet}(Z)\right)\langle 1\rangle.
\end{equation*}
The functor $\mathscr{F}_Z$ is actually independent in a natural way of the various choices, and one can actually show that it is a Floer functor (see Section \ref{formula}).

\vspace{0.5cm}
The key point in the proof is the construction of a natural transformation $\Psi$ from $\mathscr{F}_Z$ to $\mathscr{G}_Z$, which we now sketch. There is a natural cobordism $X_{\#}$ from $Y\amalg Z$ to the connected sum $Y\# Z$ given by attaching a one handle $[-1,1]\times B^3(3/4)$ to $[0,1]\times(Y\amalg Z)$ along the two balls in $\{1\}\times(Y\amalg Z)$ given by $\iota^{\hash}(B^3(3/4)))$ and $\jmath^{\hash}(B^3(3/4))$. After fixing appropriate corner roundings (independent of $Y$ and $Z$) the new boundary component is canonically identified with the connected sum $Y\#Z$.
\par
The four manifold $X^{\hash}$ contains a four ball $B^4(1/2)\subset [-1/2,1/2]\times B^3(1/2)$ in the one handle we added. As we are given standard local models, we can canonically define two hypersurfaces $Y'$ and $Z'$ identified with $Y$ and $Z$ so that $Y'$ separates $X^{\hash}\setminus B^4(1/2) $ in a copy of $I\times Y\setminus B^4$ and $X^{\hash}$, and similarly for $Z'$ (see the central picture in Figure \ref{Xhash}).
\\
\par
Fix the metric and pertubation $(g_0,\mathfrak{p}_0)$ on $X^{\#}$ so that near the boundary is a product with the on $Y$ and $Z$ we have already fixed in the discussion above. Recall that the manifold $(X_{\#})^*$ is obtained from it by attaching the cylindrical ends $(-\infty,-1]\times (Y\amalg Z)$ and $[0,\infty)\times Y\#Z$. We can choose this data so that the moduli spaces are regular. In particular counting solutions induces a chain map
\begin{equation}\label{hatm}
\hat{m}: \hat{C}_{\bullet}(Y)\otimes \hat{C}_{\bullet}(Z)\rightarrow \hat{C}_{\bullet}(Y\#Z).
\end{equation}
This is the main reason why we are working in the \textit{from} theory, as the analogous result in the \textit{to} theory is false.
\par
We then construct a family of metrics and perturbations $(g_1,\mathfrak{p}_1)$ on the cobordism $X^{\#}\setminus B^4$ parametrized by $(-\infty,\infty)$ with the following properties.
\begin{itemize}
\item For each $T$ they are a product near the boundary with the data we have already fixed for $Y,Z$ and $S^3$.
\item For $T\leq -2$, the manifold obtained from it by attaching cylindrical ends is isometric to the manifold obtained from $((X_{\#}), g_0)^*$ by removing the ball $B^4$ centered at $T$ from $(-\infty,-1]\times Y$. Here we are again using the parametrization given by $\iota^U$. Similarly for $T\geq 2$ we remove the ball centered at $T$ from $(-\infty,-1]\times Z$.
\end{itemize}
This family is depicted in Figure \ref{Xhash}. The existence of such a family follows from the fact that the space of metrics and perturbations is contractible. We will discuss in the next section how this family can be chosen so that all the moduli spaces are transverse (we will actually prove a much more general statement).
\begin{figure}
  \centering
\def\svgwidth{0.9\textwidth}
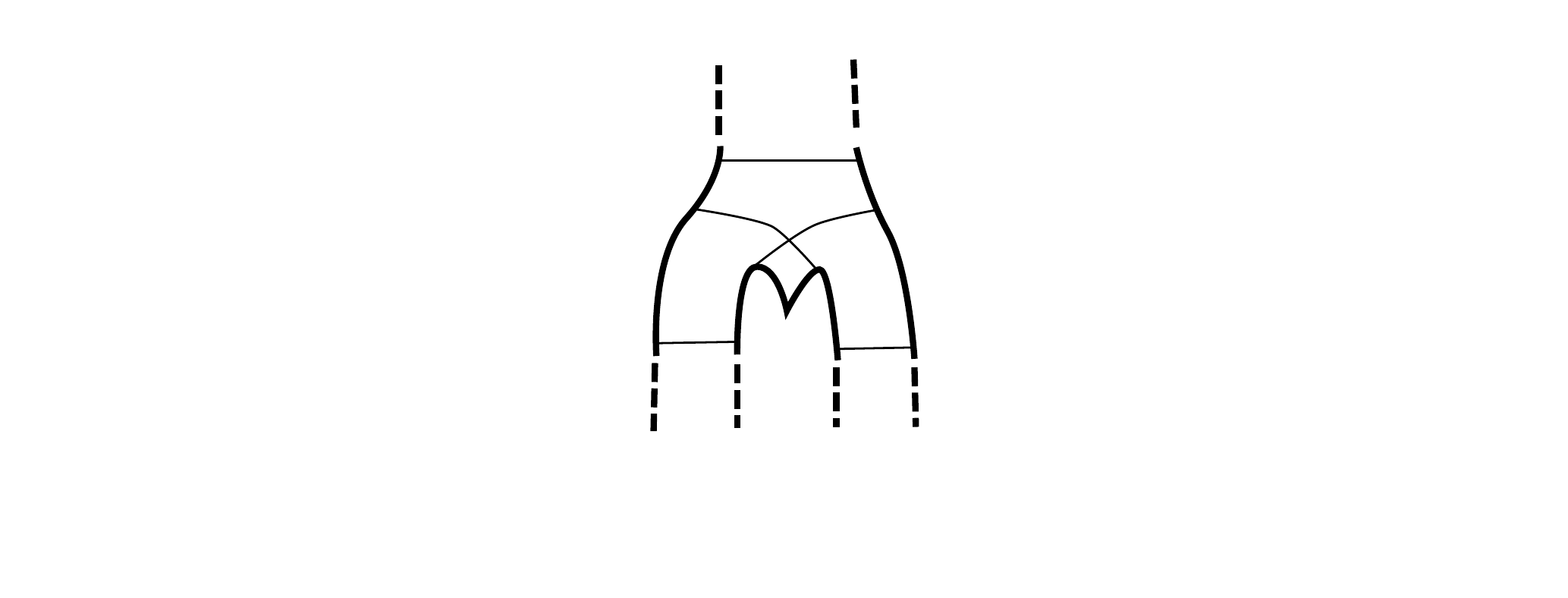
    \caption{The one parameter family of metrics we are considering on the cobordism $(W_{\#})^*$ with a ball removed. The two cobordism on the sides have the standard metric $g_0$.}
    \label{Xhash}
\end{figure} 

This moduli space is not compact as the parameter space is not. This is the usual phenomenon occurring when studying nect stretching argument, see for example Chapter $24$ of \cite{KM}. In particular suppose that we are given a family of solutions $\gamma_n$ corresponding to data $(g_1(T_n),\mathfrak{p}_1(T_n))$ which are asymptotic to $U$ on the $S^3$ end. If $T_n$ goes to $-\infty$, this converges (after passing to a subsequence) in some suitable sense to a pair consisting of
\begin{itemize}
\item a solution on the cylinder $\mathbb{R}\times Y$ with a ball removed that converges to $U$ on the $S^3$ end
\item a solution on $((W_{\#}), g_0)^*$.
\end{itemize}
One can think of this pair as a solution on the cobordism with the degenerate metric at $T=-\infty$, see Figure \ref{Xhash}. Using these maps and the same formulas as in the definition of $\hat{m}$ (Equation \ref{hatm}) one can define a map
\begin{equation*}
\hat{m}_U: \hat{C}_{\bullet}(Y)\otimes \hat{C}_{\bullet}(Z)\rightarrow \hat{C}_{\bullet}(Y\#Z)
\end{equation*}
and by considering the boundary strata one obtains the identity
\begin{equation*}
\hat{\partial}\circ \hat{m}_U+\hat{m}_U\circ \hat{\partial}=\hat{m}(U_Y\otimes 1+1\otimes U_Z).
\end{equation*}
In particular the map
\begin{equation*}
\hat{m}\oplus\hat{m}_U: \mathrm{Cone}\left( \hat{C}_{\bullet}(Y)\otimes \hat{C}_{\bullet}(Z)\stackrel{U_Y\otimes 1+1\otimes U_Z}{\longrightarrow} \hat{C}_{\bullet}(Y)\otimes \hat{C}_{\bullet}(Z)  \right)\rightarrow \hat{C}_{\bullet}(Y\#Z)
\end{equation*}
is a chain map, and we define the induced map in homology to be the natural transformation
\begin{equation*}
\eta_Z(Y): \mathscr{F}_Z(Y)\rightarrow \mathscr{G}_Z(Y).
\end{equation*}
Of course one has to check that this map is actually a natural transformation, and that the hypothesis of Theorem \ref{floerf} are satisfied. We will do this in Section \ref{formula} in the more complicated setting of $\Pin$-monopole Floer homology.

\vspace{1cm}
\section{A transversality result}\label{transversality}

In this section we discuss a technical result which is needed in the constructions. In the setting of the previous section, we have metrics and perturbations parametrized by the boundary of an interval $\partial[0,1]$ (where the boundary consists of degenerate metrics for which a neck is stretched to infinity) and we would like to extend it in the interior so that the usual transversality, compactness and gluing results hold for this parametrized family. Such a construction is implicitly used also in the proof of the surgery exact triangle, see \cite{KMOS} and \cite{Lin2}. We will always be referring to Chapters $24$ and $26$ of \cite{KM}.
\par
Before discussing the general result, we see how this construction works in the simplest case of composing cobordisms (see Chapter $26$) in the case of usual (non $\Pin$) monopole Floer homology. The main idea is expressed in Figure \ref{pert}.
\begin{figure}
  \centering
\def\svgwidth{0.9\textwidth}
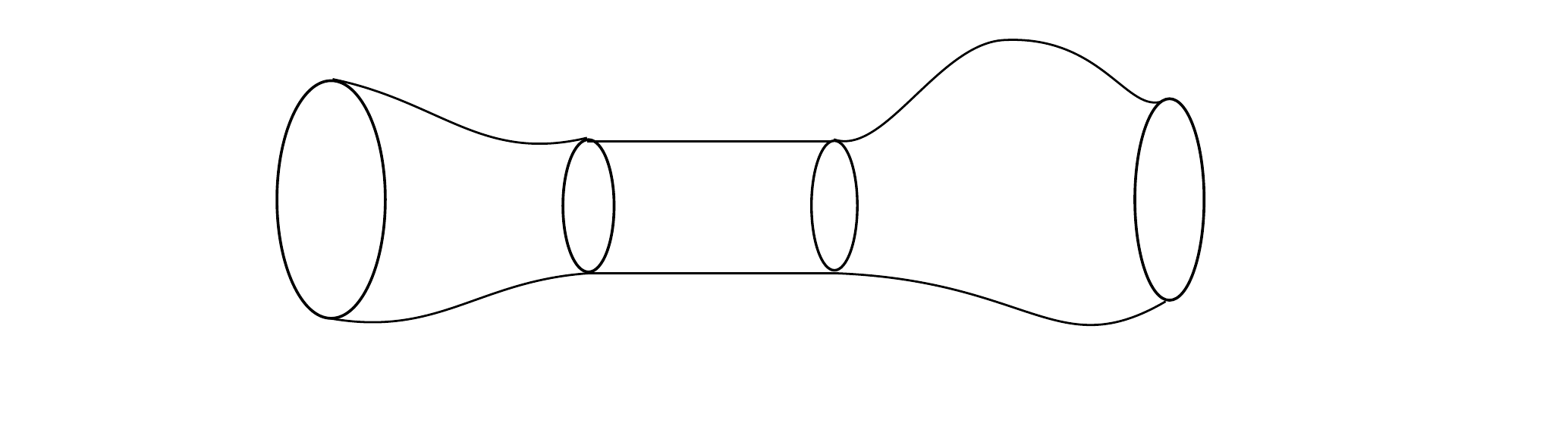
    \caption{In the construction of \cite{KM}, transversality is achieved by adding a perturbation on $W_{01}$ and $W_{12}$ independent of $S$. In our case, the perturbation on these two cobordisms is prescribed, and transversality is achieved by adding $S$-dependent perturbations on $[0,S]\times Y_1$.}
    \label{pert}
\end{figure} 
The construction there, which we now quickly review, does not satisfy our purposes as it is not the one we want on the boundary of the parametrizing interval. Suppose we are given cobordisms $W_{01}$ from $Y_0$ to $Y_1$ and $W_{12}$ from $Y_1$ to $Y_2$. Fix metrics so that the cobordisms are cylindrical near the ends, and regular tame perturbations $\p_0$, $\p_1$ and $\p_2$ on the three manifolds. For each $S\geq 2$, we can form the composite cobordism
\begin{equation*}
W(S)=W_{01}\cup ([0,S]\times Y_1)\cup W_{12},
\end{equation*} 
which is naturally equipped with a Riemannian metric. The perturbations $\p_i$ give rise to perturbations $\hat{\p}_i$ on the cylindrical parts of $W(S)^*$, the manifold obtained by adding cylindrical ends. The main transversality result is then Proposition $26.1.3$, which says that for generic choice of some additional compactly supported perturbation $\hat{\p}$ (independent of $S$) on the $W_{01}$ and $W_{12}$ the union over $S$ of the moduli spaces on $W(S)^*$ is transversely cut out. This readily follows from the fact that we can see the moduli space as a fibered product in which one of the factors is
\begin{equation}\label{parmod}
\mathcal{M}={\bigcup_{S\in[0,\infty)}} \{S\}\times M([0,S]\times Y_1)
\end{equation}
for which transversality is automatic (see Proposition $24.3.1$), and the proof of Proposition $24.4.7$. A family of these solutions with $S\rightarrow \infty$ will converge on each cobordism to a solution of the equations perturbed by $\hat{\p}$. In particular, we cannot use this approach to extend perturbations.
\par
Suppose then we have fixed perturbations $\hat{\p}_{01}$ and $\hat{\p}_{12}$ on the cobordisms so that the moduli spaces on $W_{01}^*$ and $W_{12}^*$ are transversely cut out. Consider a bump function $\beta(t)$ supported in $[0,1]$. Given an admissible perturbations $\mathfrak{q}_{\pm}$ on $Y_1$, we can add to our equations the perturbation
\begin{equation}\label{extrapert}
e^{-S}(\beta(t)\hat{\mathfrak{q}}_-+\beta(t+S-1)\hat{\mathfrak{q}}_+),
\end{equation}
which is supported on $[0,S]\times Y_1$. Of course these perturbations are mild enough to retain the nice compactness results (see Sections $24.5$ and $25.6$). Our claim is that the union of the moduli spaces over $W(S)^*$ is transversely cut out for generic choice of $\mathfrak{q}_{\pm}$. The proof of this claim is essentially identical. First, we form the parametrized moduli space $\mathcal{M}_{\mathfrak{q}_{\pm}}$ defined as Equation (\ref{parmod}) with our extra $S$ dependent perturbations (\ref{extrapert}). This has the natural structure of a Banach manifold (the proof of Proposition $24.3.1$ applies verbatim). We can also form the Banach manifold
\begin{equation*}
\mathfrak{M}=\bigcup_{\mathfrak{q}_{\pm}\in \mathcal{P}(Y_1)\times \mathcal{P}(Y_1)} \mathcal{M}_{\mathfrak{q}_{\pm}}
\end{equation*}
parametrized by two copies of the space of tame perturbations $\mathcal{P}(Y_1)$.
The key result is that (using the appropriate Sobolev completions) the linearization of the restriction map
\begin{equation*}
R:\mathfrak{M}\rightarrow \mathcal{B}^{\sigma}(\bar{Y}_1\amalg Y_1)
\end{equation*}
has range dense in the $L^2_{1/2}$-topology at a point $([\gamma],\mathfrak{q})$ with $[\gamma]$ irreducible, and similarly 
\begin{equation*}
R:\mathfrak{M}^{\mathrm{red}}\rightarrow \partial \mathcal{B}^{\sigma}(\bar{Y}_1\amalg Y_1)
\end{equation*}
has range dense in the $L^2_{1/2}$-topology if $[\gamma]$ is reducible. Here, following the notation of \cite{KM}, $\mathfrak{M}^{\mathrm{red}}$ denotes the reducible part of the moduli space. Indeed we have a stronger result, which is Lemma $24.4.8$: the linearization of the restriction map on the moduli space on $[0,S]\times Y_1$ has dense image for each $S$. From this, the desired transversality result follows from the usual fibered product description, see the proof of Proposition $24.4.7$.
\\
\par
In the $\Pin$ case some extra care has to be taken, as one might not be able to achieve transversality by only using tame perturbations. In \cite{Lin} (see Section $4.2$) a more general class of perturbations, called $\Pin$-equivariant \textsc{asd}-perturbations, is introduced. While tame perturbations are inherently three dimensional, this new type of perturbations is defined on the blown-up configuration space a cobordism. It is shown in \cite{Lin} that this class is large enough to achieve transversality for the moduli spaces on cobordisms (while retaining the nice analytical properties). In our setting, the same discussion above implies that for a generic choice of $\hat{\omega}_{\pm}\in\mathcal{P}_{\textsc{asd}}(I\times Y_1)$, after adding the $S$-dependent perturbation
\begin{equation*}
e^{-S}(\hat{\omega}_-+\tau_{S-1}^*\hat{\omega}_+)
\end{equation*}
the parametrized moduli spaces are transversely cut out. Here $\tau_{S-1}^*\hat{\omega}_+$ denotes the translation of $\hat{\omega}_+$ so that it is a perturbation in $[S-1,S]\times Y_1$, so that the extra perturbation is supported on $[0,S]\times Y_1$.
\\
\par
We now discuss the general case of a family of metrics and perturbations on a four manifold $X$ parametrized by $P$ a compact smooth manifold with corners (with some additional data). In our specific case, $P$ will be an associahedron or a multiplihedron (see the next section).
\begin{defn}\label{standbound}
We say that a family of (possibly degenerate) metrics and perturbations parametrized by $P$ is \textit{standard at the boundary} if the following conditions hold.
\begin{enumerate}
\item There is a correspondence between the faces $\Delta_i$ of $P$ and a collection of embedded hypersurfaces $Y_i\subset X$. Two faces intersect if and only if the corresponding hypersurfaces are disjoint.
\item For each face $\Delta_i$, there is a neighborhood $U_i$ in $P$ identified with $(T_i,\infty]\times \Delta_i$ such that given a collection of faces $\Delta_1,\dots,\Delta_k$ we have a natural identification
\begin{equation*}
\bigcap_{i=1}^k U_i\equiv \prod_{i=1}^k(T_i,\infty]\times \bigcap_{i=1}^k\Delta_i
\end{equation*}
\item If $p=(S_i, q)\in U_i$, the corresponding metric is obtained by attaching the cylinder $[0,S_i]\times Y_i$ to the manifold with cylindrical ends corresponding to $q$ (which has a cylindrical end modeled on $\left([0,+\infty)\amalg(-\infty,0]\right)\times Y_i$).
\end{enumerate}
We call $\{U_i\}$ a \textit{compatible system of boundary neighborhoods}, and think of it as part of the data associated to the manifold with corners $P$.
\end{defn}
For example, in the previous discussion $P$ was a closed interval. In the case of the surgery exact triangle, $P$ is obtained by identifying the edges of five squares, so that it is a pentagon. The conditions imply that in the neighborhood of a codimension $k$ facet the metric is obtained by stretching along $k$ disjoint hypersurfaces. Notice that $\{U_i\}$  induces a compatible system of boundary neighborhoods on each facet $\Delta$, and the restriction of the family to $\Delta$ is standard at the boundary. In our case, the existence of such a compatible system of boundary neighborhoods will follow from a nice description of the associahedron as a union of cubes (see Lemma \ref{associaneigh}).
Our main transversality result is the following.

\begin{prop}\label{fundtr}
Suppose we are given a smooth compact manifold with corners $P$ with a compatible system of boundary neighborhoods $\{U_i\}$, and a regular family of metrics and perturbations on $\partial P$ such that the restriction on each codimension one facet is standard at the boundary. Suppose in addition that the family metrics is compatible, i.e. one can extend it to a neighborhood $U$ of $\partial P$ so that it is standard at the boundary. Then there is a regular family of metrics and perturbations on $P$ which is standard at the boundary and extends the given one on $\partial P$. The space of such extensions is contractible.
\end{prop}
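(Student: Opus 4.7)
The plan is to first extend the family of metrics to all of $P$ using the compatibility assumption, and then to use a Sard--Smale argument to find generic perturbations that make all moduli spaces regular while preserving the standard-at-boundary structure.

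For the metrics, the compatibility assumption directly gives an extension to a neighborhood $U$ of $\partial P$. Since the space of (possibly degenerate, cylindrical-end) metrics of the prescribed type is a contractible convex subset of a Fr\'echet space and $P$ is a compact manifold with corners, we can extend the given family across $P \setminus U$ by standard obstruction theory. The resulting family is standard at the boundary in the sense of Definition \ref{standbound}.

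For the perturbations, the strategy mirrors the construction sketched above for $P = [0,1]$. For each codimension-one face $\Delta_i$ with boundary neighborhood $U_i \cong (T_i, \infty] \times \Delta_i$, we add to the given perturbation at parameter $p = (S_i, q) \in U_i$ an extra $\Pin$-equivariant \textsc{asd}-perturbation supported in the attached neck $[0, S_i] \times Y_i$, scaled by a factor decaying exponentially in $S_i$ so as to preserve the standard-at-boundary form. In the interior of $P$ we are additionally free to add compactly supported $\Pin$-equivariant \textsc{asd}-perturbations on $X$. Let $\mathcal{E}$ denote the affine Banach space of all such extensions of the given boundary data, and form the universal parametrized moduli space
\begin{equation*}
\mathfrak{M} = \{ (p, \omega, [\gamma]) : \omega \in \mathcal{E},\, p \in P,\, [\gamma] \in M_p(X, \omega) \},
\end{equation*}
together with its reducible locus $\mathfrak{M}^{\mathrm{red}}$. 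The key analytic point is that the restriction to $\omega$-variations of the universal linearization at any irreducible (resp.\ reducible) solution has dense image in the relevant $L^2_{1/2}$ Sobolev space: at points with $p$ in the interior of $P$ this follows from the flexibility of compactly supported \textsc{asd}-perturbations established in \cite{Lin}, while near $\partial P$ one uses the same argument given for an interval in the discussion preceding this proposition, treating each face independently because its associated neck is disjoint from the others. Hence $\mathfrak{M}$ and $\mathfrak{M}^{\mathrm{red}}$ are Banach manifolds and the projection $\mathfrak{M} \to \mathcal{E}$ is Fredholm; by Sard--Smale the set of regular $\omega$ is residual, and any such $\omega$ yields the desired regular extension.

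Contractibility of the space of regular extensions is then obtained by running the same Sard--Smale argument with $P$ replaced by $P \times S^k$ (or $P \times D^{k+1}$) to realize arbitrary families of regular extensions and nullhomotopies thereof; this shows that all homotopy groups vanish, and combined with the openness of regularity gives contractibility. The main technical obstacle is the dual requirement on the extra perturbations: they must be flexible enough at interior points of $P$ to make the universal linearization surjective, yet sufficiently constrained near $\partial P$ to respect the standard-at-boundary structure and the usual neck-stretching compactness and gluing results. The exponential cutoff in the neck parameters $S_i$ is precisely the device that reconciles these two requirements, and here the specific structure of a compatible system of boundary neighborhoods is essential.
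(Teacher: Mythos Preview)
Your proposal is essentially correct and follows the same strategy as the paper: extend the metrics first using contractibility, then achieve transversality by adding exponentially damped perturbations supported in the necks $[0,S_i]\times Y_i$, invoking the density of the restriction map established earlier in the section, and finishing with a Sard--Smale argument. The paper is more explicit about the cutoffs---it writes down specific functions $\varphi_i(S_i)$ and $\psi_i(q)$ (the latter decaying along $\Delta_i$ near the other faces $\Delta_j$, which is what makes the added perturbation compatible near higher-codimension corners)---and it organizes the argument in two steps (transversality near $\partial P$ via neck perturbations, then extension to the interior via the standard extension-from-a-closed-subset result, Proposition~24.4.10 of \cite{KM}), whereas you treat everything in one universal moduli space. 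The paper's contractibility argument is also more direct: it simply observes that the space of metrics, perturbations, and cutoff functions is itself contractible, rather than running a $P\times S^k$ argument.
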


\begin{proof}
Consider first the usual (non $\Pin$) case. Pick any smooth extension to the whole $P$ which is standard at the boundary. The compactness properties for this parametrized moduli spaces are clear (see again Chapter $26$ of \cite{KM}), so we just need to focus on transversality. Consider the neighborhood $U_i\equiv (T_i,\infty]\times \Delta_i$ of $\Delta_i$ in $P$. Choose a function $\varphi_i$ on $(T_i,\infty]$ which is zero near $T_i$ and $e^{-S_i}$ for very large $S_i$, and a function $\psi_i$ on $\Delta$ which is always positive and decays as $e^{-S_j}$ near the boundary component $\Delta_i\cap \Delta_j$ (such a function can be easily written down using the compatible system of boundary neighborhoods induced on $\Delta$). For each $i=1,\dots, k$ fix perturbations $\mathfrak{q}_{i,\pm}$ in $\mathcal{P}(Y_i)$. For $(S_i,q)\in U_i$ we can add as in equation (\ref{extrapert}) the perturbations
\begin{equation*}
\varphi_i(S_i)\psi_i(q)\left(\beta(t)\hat{\mathfrak{q}}_{i,-}+\beta(t+S_i-1)\hat{\mathfrak{q}}_{i,+}\right),
\end{equation*}
each of which is supported in (a collar of the boundary of) $[0,S_i]\times Y_i$ and vanishes at $\partial P$. The same argument as above implies that for generic choice of the $\mathfrak{q}_{i,\pm}$ this satisfies the hypothesis. The extension to the interior of $P$ then follows from the usual extension result from a closed subset, Proposition $24.4.10$. Finally, the contractibility follows from the contractibility of the space of metrics, perturbations, and the space of functions we have chosen.
\par
The result in the $\Pin$ case can be achieved in the same spirit as the previous discussions by adding extra perturbations supported in $[0,S_i]\times Y_i$ of the form
\begin{equation*}
\varphi_i(S_i)\psi_i(q)\left(\hat{\omega}_{i,-}+\tau_{S_i-1}^*\hat{\omega}_{i,+}\right),
\end{equation*}
for generic $\hat{\omega}_{i,\pm}\in\mathcal{P}_{\textsc{asd}}(I\times Y_i)$.
\end{proof}

\vspace{1cm}
\section{Higher compositions}\label{higher}

In this section we prove Theorem \ref{ainf}. We begin by recalling how the usual module structure is defined in our context. We have briefly discussed this in section \ref{HMconn} but we will review it here in more detail (also because we need to deal with more intricate transversality issues).
\par
Consider the cobordism $I\times Y$ with an open ball removed, seen as a cobordism with incoming ends $Y$ and $S^3$ and outgoing end $Y$, one obtains the map
\begin{equation}\label{modact}
\HSf_{\bullet}(Y)\otimes \HSf_{\bullet}(S^3)\rightarrow \HSf_{\bullet}(Y),
\end{equation}
and we can identify $\HSf_{\bullet}(S^3)$ with $\Rin$ (after a grading shift). More explicitly, the map is defined as follows. Suppose we have fixed on $S^3$ a metric of positive scalar curvature. It follows that for small perturbations there are no irreducible critical points and only one reducible solution. In particular the chain complex computing $\HSf_{\bullet}(S^3)$ is the direct sum of the chain complexes of the unstable critical submanifolds $[\mathfrak{C}_{i}]$, $i\leq -1$, and the homology is indeed isomorphic to $\Rin$. To define the action of a class $r\in \Rin$  we can then proceed as follows. Choose on $(I\times Y)\setminus \mathrm{int}B^4$ a metric which is a product with a metric of positive scalar curvature on the incoming $S^3$ end. Choose a smooth chain $\Delta$ representing the class $r$. For a generic choice of perturbations on the cobordism with cylindrical ends attached, we can suppose that the moduli spaces are transverse to $\Delta$, so that we can take the fibered products. Thus we can define the maps
\begin{equation*}
m(\Delta)^o_o: C^o(Y)\rightarrow C^o(Y)
\end{equation*}
and the seven companions $m(\Delta)^o_s,m(\Delta)^u_o,m(\Delta)^s_u, \bar{m}(\Delta)^s_s,\bar{m}(\Delta)^s_u,\bar{m}(\Delta)^u_u$ and $\bar{m}(\Delta)^u_s$. We can use these to define the map
\begin{equation*}
\hat{m}(\Delta):\hat{C}_{\bullet}(Y)\rightarrow \hat{C}_{\bullet}(Y)
\end{equation*}
by the formula
\begin{equation}\label{chainmod}
\hat{m}(\Delta)=
\begin{bmatrix}
m^o_o & m^u_o \\
\bar{m}^s_u\partial^o_s+\bar{\partial}^s_u m^o_s & \bar{m}^u_u+\bar{m}^s_u\partial^u_s+\bar{\partial}^s_u m^u_s
\end{bmatrix}.
\end{equation}
Recall that $\hat{C}_{\bullet}(Y)$ is the direct sum $C^o(Y)\oplus C^u(Y)$. One can check that this is actually a chain map (using the fact that there are no irreducible solutions on $S^3$). We define the induced map to be the action of $r$ on $\HSf_{\bullet}(Y)$.
\par
We can also see the map (\ref{modact}) from an alternative viewpoint which will generalize to higher compositions. Consider the subcomplex
\begin{equation*}
\hat{C}_{\bullet}(Y,2)\subset \hat{C}_{\bullet}(Y)\otimes \hat{C}_{\bullet}(S^3)
\end{equation*} 
generated by chains $\sigma\otimes\tau$ such that $\sigma\times \tau$ is is transverse to the moduli space on $((I\times Y)\setminus \mathrm{int}B^4)^*$. The inclusion of this chain complex is a quasi-isomorphism, as it can be seen by using the energy filtration (see Section $3.2$ of \cite{Lin}). The fibered products give rise to a chain map
\begin{equation*}
m_2:\hat{C}_{\bullet}(Y,2)\rightarrow \hat{C}_{\bullet}(Y)
\end{equation*}
inducing the module structure (\ref{modact}).  
\\
\par
We now generalize the construction we have just described to construct the higher compositions
\begin{align*}
\mu_n&: \hat{C}_{\bullet}(S^3,n)\rightarrow \hat{C}_{\bullet}(Y)\\
m_n&: \hat{C}_{\bullet}(Y, n)\rightarrow \hat{C}_{\bullet}(Y)
\end{align*}
where
\begin{align*}
\hat{C}_{\bullet}(S^3,n)&\subset\hat{C}_{\bullet}(S^3)^{\otimes n}\\
 \hat{C}_{\bullet}(Y, n)&\subset \hat{C}_{\bullet}(Y)\otimes \hat{C}_{\bullet}(S^3)^{\otimes n-1}
\end{align*}
are chain complexes such that the inclusions are quasi-isomorphisms.
\\
\par
As remarked in the previous section, a key point is that the constructions should be made so that the three manifolds arising come with canonical identifications with those we are interested in. To do this, we will sometime require some additional data to be fixed.
\par
From the cobordism $I\times Y$ we can remove $d-1$ open balls in order to obtain a cobordism $W_d$ with incoming ends $Y, S^3_1,\cdots, S^3_{d-1}$. As in Section \ref{HMconn}, these balls are parametrized (after a suitable shifting in the $I$ component and linear rescaling, both independent of $Y$) by applying to
\begin{equation}\label{4ball}
B^4\subset [-1,1]\times B^3
\end{equation}
the map $\iota^U$ fiberwise. It will be convenient to consider only the image of $B^4(1/2)$. Here we label the incoming $S^3$ ends by $S^3_i$ and we consider the set of incoming ends as an ordered set. To simplify the notation we will sometime denote $Y$ by $S^3_0$. Similarly denote by $Z_d$ the cobordism $I\times S^3$ with $d-1$ balls removed, so that it has $d$ incoming ends and one outgoing end.
\par
On $W_d$, for each pair of incoming ends $S^3_i$ and $S^3_j$ with $i<j$ we can define a hypersurface $\Sigma_{i,j}$ schematically depicted in Figure \ref{Wd}. This is canonically identified of $S^3$ if $i>0$, while it is identified of $Y$ for $i=0$. The hypersurface $\Sigma_{i,j}$ separates the cobordism $W_d$ in two pieces:
\begin{itemize}
\item if $i=0$, two cobordisms which are diffeomorphic to $W_j$ and $W_{d-1-j}$;
\item if $i=1$, two cobordisms, one diffeomorphic to $W_{d-2+(j-i)}$ and one to $Z_{i-j+1}$.
\end{itemize}
Two hypersurfaces $\Sigma_{i,j}$ and $\Sigma_{k,l}$ intersect if and only if the intervals $\{i,,\dots, j\}$ and $\{k,\dots,l\}$ intersect and are not contained one in the other. When the intersection is not empty it is diffeomorphic to $S^2$.
\\
\par
We can specify the hypersurfaces as follows. The hypersurface $\Sigma_{0,j}$ is an hypersurface of the form $\{(t_j)\}\times Y$. The hypersurfaces identified with $S^3$ come the map $\iota^U$ applied fiberwise to a ball $B^4(R)$ as in (\ref{4ball}) after shifting in the $I$ direction and linearly rescaling (using a fixed model independent of $Y$). We can choose the parameters so that the intersections are as described (any two choices are diffeomorphic). This is shown schematically in Figure \ref{Wd}. We will depict them more conveniently as in the left of Figure \ref{K4}, where the cusps denote the incoming $S^3$ ends. 
Again, the key point here is that these hypersurfaces come with canonical diffeomorphisms with respectively $Y$ and $S^3$.
\begin{figure}
  \centering
\def\svgwidth{\textwidth}
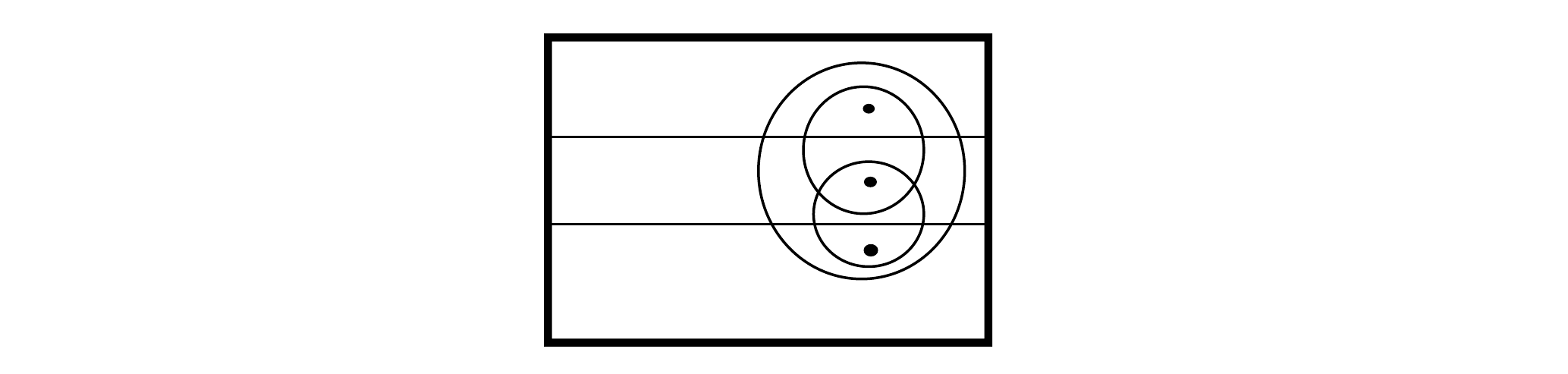
    \caption{The five separating hypersurfaces in $I\times Y$ with three balls removed.}
    \label{Wd}
\end{figure} 

\vspace{0.3cm}

Using these hypersurfaces, we define a family of metrics parametrized by the $n$-\textit{associahedron} $K_n$. Recall that this is an $(n-2)$-dimensional convex polytope that plays a key role in the context of $\Ainf$-structures. The face poset of the associahedron corresponds to the coherent parenthesizations of the ordered set $\{0,\dots, n-1\}$. The key property is that the codimension one faces of the associahedron $K_n$ are naturally identified with products of smaller dimensional associahedra $K_j\times K_{n-1-j}$. The associahedron $K_3$ is the interval, while the associahedron $K_4$ is the pentagon.
\par
In our setting the best way to think about the associahedron $K_n$ is the following. Each codimension one face correspond to a single parenthesis to which we can associate a single hypersurface $\Sigma_{ij}$ in our cobordism $W_n$ or $Z_n$. We will think of a face as parametrizing a family of metrics on which the corresponding hypersurface is stretched to infinity. This can be done consistently because two faces that intersect correspond to disjoint hypersurfaces. To inductively apply the transversality result, Proposition \ref{fundtr}, we need the following lemma.
\begin{lemma}\label{associaneigh}
The associahedron $K_n$ admits a compatible system of boundary neighborhoods (in the sense of Definition \ref{standbound}). The restriction fn the compatible system to a facet $K_j\times K_{n-1-j}$ is the product of the compatible systems on each factor.
\end{lemma}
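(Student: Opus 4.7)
The plan is to realize $K_n$ as a manifold with corners via rooted planar trees with metric internal edges, and to read off the compatible system of boundary neighborhoods from the canonical ``gluing parameters''. A codimension-$k$ face $\Delta$ of $K_n$ corresponds to a rooted planar tree $\tau$ with $n$ leaves and $k$ internal edges; a general point of $\Delta$ is obtained by assigning to each internal edge a length in $(0,+\infty)$. Passing from $\Delta$ to a higher-codimension stratum amounts to letting some of these lengths tend to $+\infty$, which ``breaks'' the corresponding edge (the combinatorial analog of inserting a parenthesis), whereas approaching the interior of $K_n$ from $\Delta$ corresponds to letting lengths tend to $0$. In particular the codimension-one facets correspond to trees with a single internal edge, i.e.\ to single parentheses, hence to the single hypersurfaces $\Sigma_{ij}\subset W_n$ (or $Z_n$) as prescribed in condition (1) of Definition \ref{standbound}.

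For a codimension-$k$ face $\Delta$, let $(S_1,\dots,S_k)\in (T_0,+\infty]^k$ denote the lengths of the $k$ internal edges of $\tau$; the resulting gluing chart
\[
\Phi_\Delta : (T_0,+\infty]^k \times \Delta \hookrightarrow K_n
\]
reinserts edges of finite length $S_i$. Applied to a codimension-one facet $\Delta_i$, this produces the required neighborhood $U_i\equiv (T_i,+\infty]\times \Delta_i$. Two facets $\Delta_i,\Delta_j$ meet if and only if the two parentheses are either disjoint or nested, equivalently if and only if the associated hypersurfaces are disjoint, and in that case both internal edges survive in the tree over $\Delta_i\cap\Delta_j$, so the lengths $S_i$ and $S_j$ are independent parameters on $U_i\cap U_j$. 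This yields the product identification required in condition (2) of Definition \ref{standbound}, and the analogous statement for higher-order intersections is immediate from the same picture. Condition (3) is then built directly into the construction: the metric at $(S_i,q)\in U_i$ is defined by inserting a cylinder of length $S_i$ along the hypersurface associated to $\Delta_i$ into the metric parametrized by $q\in\Delta_i$, which by the inductive hypothesis already carries a cylindrical end along that hypersurface.

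For the product statement, the facet $\Delta_i\cong K_j\times K_{n-1-j}$ corresponds to cutting $\tau$ along its unique internal edge into two subtrees with the appropriate numbers of leaves. A codimension-$(k+1)$ stratum of $K_n$ contained in $\Delta_i$ then splits canonically as a product of strata of $K_j$ and $K_{n-1-j}$, and its $k$ gluing parameters split accordingly as gluing parameters on the two factors. The restriction of $\Phi_\Delta$ to $\Delta_i$ is therefore the product of the analogous charts on $K_j$ and $K_{n-1-j}$, so the induced system of boundary neighborhoods on $\Delta_i$ is the product of those on the two factors.

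The main obstacle is producing coherent charts $\Phi_\Delta$ across all faces of all codimensions simultaneously; defining them one codimension at a time through inductive collarings would require nontrivial matching conditions along the common strata. The tree-with-metric-edges presentation of $K_n$ sidesteps this by producing all charts at once, and the required compatibility then reduces to the trivial observation that finite-length edges and broken (infinite) edges play independent roles in the gluing construction.
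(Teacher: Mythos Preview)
Your approach is essentially the paper's, phrased in the language of metric trees rather than hypersurfaces: the paper builds $K_n$ as a union of cubes $[0,\infty]^{n-2}$, one for each maximal collection of pairwise non-intersecting hypersurfaces (equivalently, one per binary tree with $n$ leaves), glued along the faces where a coordinate vanishes; the neighborhood $U_i$ is then the union of the sets $\{t_i>1\}$ over all cubes in which the $i$th hypersurface appears. Your gluing charts $\Phi_\Delta$ encode exactly the same data viewed from the faces rather than from the vertices.

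One sentence in your first paragraph is misstated: you write that ``a general point of $\Delta$ is obtained by assigning to each internal edge a length in $(0,+\infty)$'', but assigning lengths to the $k$ internal edges of $\tau$ gives the $k$ \emph{normal} directions to $\Delta$, not a parametrization of $\Delta$ itself. A point of the codimension-$k$ face $\Delta$ is a point of the product $\prod_v K_{|v|}$ over the internal vertices $v$ of $\tau$ (equivalently, a metric refinement of $\tau$ at each vertex), which has the correct dimension $n-2-k$; with that description, ``passing to a higher-codimension stratum'' means introducing new internal edges in these subtrees, not sending existing lengths to $+\infty$. Your chart $\Phi_\Delta:(T_0,+\infty]^k\times\Delta\hookrightarrow K_n$ in the second paragraph is correct and is what the argument actually uses, so this is a slip of exposition rather than a gap.
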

\begin{proof}
There is an alternative description of $K_n$ that makes the result clear. To each $n$-uple of non-intersecting hypersurfaces $\{Y_1,\dots, Y_n\}$ associate the cube $[0,\infty]^n$, where each hypersurface $Y_i$ corresponds to the coordinate $t_i$. Given two $n$-uples of hypersurfaces which differ only by one element, we identify the faces of the respective cubes in which the coordinate corresponding to the different hypersurface is zero. The result is the associahedron $K_n$, and the neighborhood $U_i$ (corresponding to the hypersurface $Y_i$) obtained by taking the union of the open sets $\{t_i>1\}$ over all cubes in which $Y_i$ appears.
\end{proof}

We now describe explicitly how the construction is performed order to define the maps $\mu_n$ and $m_n$ satisfying the appropriate composition laws. As the discussion above suggests, we will take an inductive approach.
\par
Fix any regular metric and perturbation $(g_1,\p_1)$ and $(h_1,\mathfrak{q}_1)$ on $Y$ and $S^3$, with the additional requirement that for the latter the metric has positive scalar curvature and the perturbation is small so that there are no reducible solutions. From this we can define the maps $m_1$ and $\mu_1$, which are just the differentials of the chain complexes $\hat{C}^{\jmath}_{\bullet}(Y)$ and $\hat{C}^{\jmath}_{\bullet}(S^3)$. The maps $\mu_2$ and $m_2$ are just the chain maps giving rise to the module structures described above. In particular for $m_2$ we consider the manifold $W_2$ and a metric and a pertubation $(g_2,\p_2)$ on it which coincide with the given ones for $Y$ and $S^3$ in a neighborhood of the boundary and for which the moduli spaces are regular. Similarly we have fixed a pair $(h_2,\mathfrak{q}_2)$.
\\
\par
The first non trivial case is the family of metrics and perturbations on $W_3$, which is analogous to the construction in the precious section. In this case the family is parametrized by the associahedron $K_3$, which is identified with the interval $[-\infty,\infty]$. The hypersurface $\Sigma_{01}$ cuts the cobordism in two copies of $W_2$, so that we can associate to the point $-\infty$ two copies pair $(g_2,\p_2)$. Similarly the hypersurface $\Sigma_{12}$ cuts the cobordism in a copy of $W_2$ and a copy of $Z_2$, so that over the point $+\infty$ we can associate the pair which is $(g_2,\p_2)$ on $W_2$ and $(h_2,\mathfrak{q}_2)$ on $Z_2$. We can then apply Proposition \ref{fundtr} to obtain a family $(g_3,\p_3)$ on $W_3$ parametrized by the closed interval so that for $T$ negative enough it is (up to an exponentially decaying pertubation term) obtained by attaching cylinder of length $T$ to the outgoing end of the first copy of $W_2$ and the incoming end of second copy of $W_2$, and similarly for $T$ positive enough, see Figure \ref{stretch}. In an analogous manner one can define a family of metrics and pertubations $(h_3,\mathfrak{q}_3)$ on $Z_3$.
\begin{figure}
  \centering
\def\svgwidth{\textwidth}
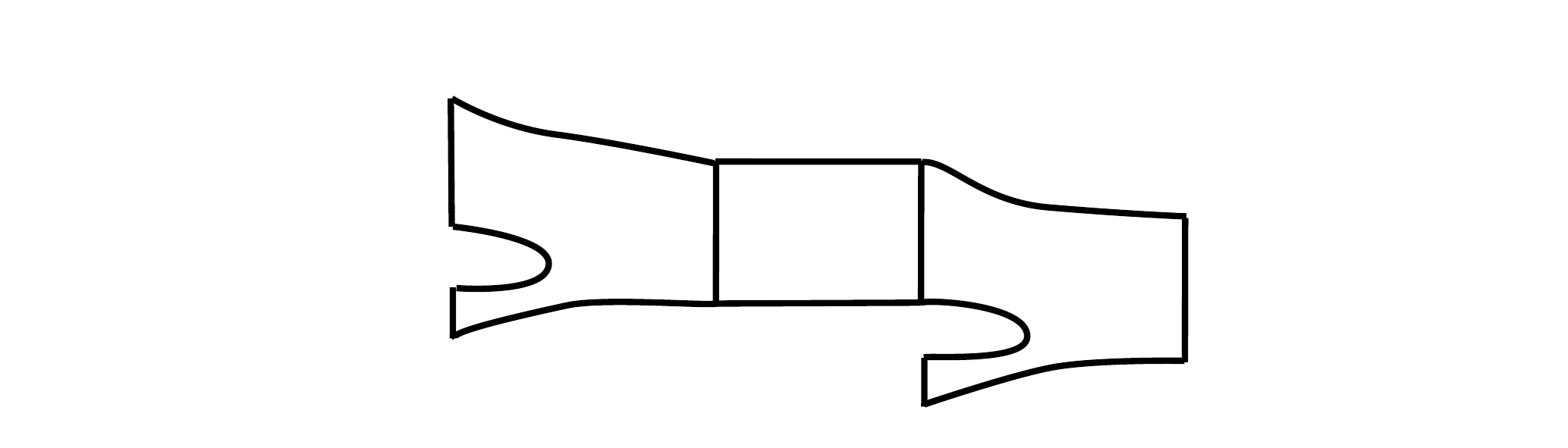
    \caption{The family of metrics $g_3$ for $T$ negative.}
    \label{stretch}
\end{figure} 

The definition of the family of pairs $(g_n,\p_n)$ and $(h_n,\mathfrak{q}_n)$ parametrized by the associahedron $K_n$ now proceeds inductively. We consider the case when $n$ is equal to $4$. In this case $K_4$ is a pentagon (obtained by gluing together $5$ squares) and each edge corresponds to a hypersurface $\Sigma_{ij}$. Two edges share a vertex if and only if the corresponding hypersurfaces are disjoint. Each of these hypersurfaces separates $W_4$ in a pair of cobordisms on each of which we have already defined a family of metrics and perturbations, see Figure \ref{K4}. In particular we can use products of the families of metrics and perturbations we have already defined as the metrics parametrized by the edges of $K_4$. This family parametrized by the boundary can be extended in the interior to a regular family using Proposition \ref{fundtr}. The key point is that two codimension one faces of the associahedron intersect if and only if they correspond to disjoint hypersurfaces.
\begin{figure}
  \centering
\def\svgwidth{0.9\textwidth}
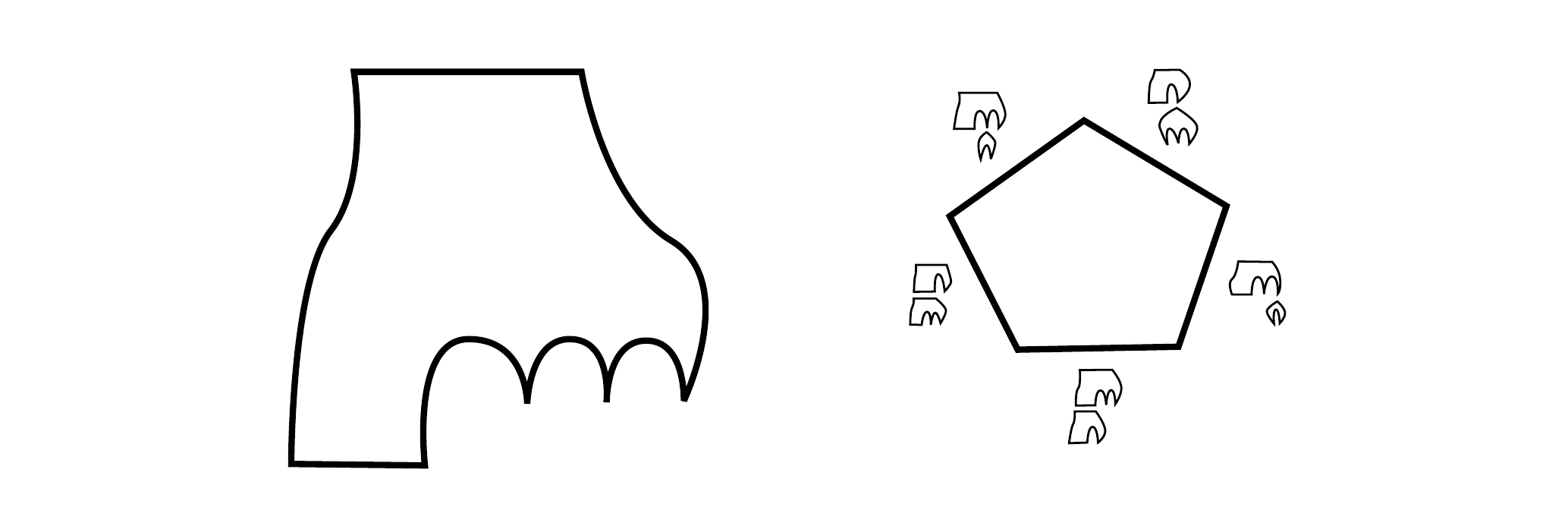
    \caption{The two parameter family of data paramerized by $K_4$ that looks like stretching along the hypersurfaces $\Sigma_{ij}$. Each edge of $K_4$ corresponds to one of the hypersurfaces.}
    \label{K4}
\end{figure} 

The same idea then applies to each $n$. Each hypersurface divides the cobordism $W_n$  in a pair of cobordisms on which we have already defined a family of metric parametrized by the associahedra $K_j$ and $K_{n-j-1}$ for some $j$. This hypersurface corresponds to a face of the associahedron $K_n$ which is identified with the product $K_j\times K_{n-j-1}$. So the previously defined families induce one on the boundary of $K_n$, which we can extend inside thanks to Proposition \ref{fundtr}.
\\
\par
Our construction naturally leads to the following definition. 
\begin{defn}We say that a sequence $\mathscr{D}=\{(g_n,\p_n), (h_n,\mathfrak{q}_n)\}$ of metrics and pertubations on $W_n$ and $Z_n$ parametrized by the associahedron $K_n$ forms \textit{admissible data} if 
\begin{itemize}
\item all parametrized moduli spaces are regular;
\item for each $n$ the restriction of $(g_n,\p_n)$ to a codimension one face is given by the product of the corresponding families on the two cobordisms obtained by cutting along the corresponding hypersurface, and similarly for $(h_n,\mathfrak{q}_n)$.
\end{itemize}
\end{defn}
Having defined the admissible data $\mathscr{D}$ ready to define the higher compositions $m_n$ and $\mu_n$. Consider the subspace
\begin{equation*}
\Cj(Y,n)\subset \Cj(Y)\otimes \Cj(S^3)^{\otimes n-1}
\end{equation*}
generated by products $\sigma\otimes \tau_1\otimes\cdots \otimes \tau_{n-1}$ such that:
\begin{itemize}
\item for each $j=1,\dots n-1$ the chain $\sigma\times \tau_1\times\cdots \tau_j$ is transverse to all the compactified moduli spaces of solutions on $W_{j+1}$ parametrized by the family of metrics $(g_{j+1},\p_{j+1})$.
\item for each $1\leq j< l\leq n-1$ the chain $\tau_j\times\cdots \tau_l$ is transverse to all the compactified module spaces of solutions on $Z_{l-j+1}$ parametrized by the family of metrics $(h_{l-j+1},\mathfrak{q}_{l-j+1})$.
\end{itemize}
This is a subcomplex and the inclusion induces an isomorphism in homology, as it can be seen by looking at the spectral sequence associated to the energy filtration. 
\par
By using the same formulas as in (\ref{chainmod}) we can then construct the map
\begin{equation*}
m_n: \Cj(Y,S^3,n)\rightarrow \Cj(Y)
\end{equation*}
by taking fibered products with the moduli spaces.
The analogous definition gives rise (by substituting $Y=S^3$) to the maps
\begin{equation*}
\mu_n: \Cj(S^3,n)\rightarrow \Cj(S^3).
\end{equation*}
We will denote the whole package of chain complexes and higher compositions by $\Cj(Y,\mathscr{D})$ and $\Cj(S^3,\mathscr{D})$. We have the following key result.
\begin{prop}
The maps $m_n$ and $\mu_n$ satisfies the relations (\ref{infalg}) and (\ref{infmod}).
\end{prop}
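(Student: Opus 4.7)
The plan is to prove the $\Ainf$-relations by identifying them with the vanishing (over $\ztwo$) of the signed boundary point count of suitable one-dimensional compactified parametrized moduli spaces, in the spirit of the proofs that $\hat\partial^2=0$ or that $m_2$ is a chain map. I focus on the module relation (\ref{infmod}); the algebra relation (\ref{infalg}) will follow from the identical argument applied to $Z_{n+1}$ (with all incoming ends being $S^3$).

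Fix $n$ and a chain $\sigma\otimes \tau_1\otimes\cdots\otimes \tau_n$ lying in the intersection of the various subcomplexes on which each composition in (\ref{infmod}) is defined. The first step is to arrange dimensions so that the fibered product of the parametrized moduli space on $W_{n+1}$ with $\sigma\times\tau_1\times\cdots\times\tau_n$ at the incoming ends is one-dimensional; strictly speaking, as in (\ref{chainmod}), this is really a collection of pieces corresponding to the matrix components that assemble the map $m_{n+1}$. The count of boundary points of this oriented $1$-manifold vanishes in $\ztwo$, and the proof amounts to identifying each codimension-one stratum with a term appearing in (\ref{infmod}).

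There are three sources of boundary points. First, the simplicial boundary of the input chains yields $m_{n+1}(m_1\sigma,\tau_1,\dots)+\sum_k m_{n+1}(\sigma,\dots,\mu_1\tau_k,\dots)$, i.e.\ the $j=1$ and the $\mu_1$ terms of (\ref{infmod}). Second, trajectory breakings in the outgoing cylindrical end $(0,\infty)\times Y$ produce, after bookkeeping through (\ref{chainmod}), the term $m_1 m_{n+1}(\sigma,\tau_1,\dots,\tau_n)$. Third, and most importantly, the boundary of the parameter space $K_{n+1}$ contributes through its codimension-one faces: by Lemma \ref{associaneigh} and the admissibility of $\mathscr{D}$, each face is canonically a product of smaller associahedra over which the restricted family is the product of previously defined admissible families, and the usual gluing and compactness theorem identifies the moduli space over such a face with a fibered product of moduli spaces on the two pieces of $W_{n+1}$ cut by the corresponding hypersurface $\Sigma_{i,j}$, fibered over $Y$ if $i=0$ and over $S^3$ if $i\geq 1$. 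Fibering with the input chains and unpacking (\ref{chainmod}) should yield precisely the remaining composite terms $m_{i}(m_j(\sigma,\tau_1,\dots,\tau_{j-1}),\dots)$ and $m_{i}(\sigma,\dots,\mu_j(\tau_l,\dots,\tau_{l+j-1}),\dots)$ in (\ref{infmod}).

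The hard part will be the bookkeeping forced by reducible solutions: the matrix entries in (\ref{chainmod}) mix irreducible moduli counts with $\bar{m}$-type reducible boundary counts, so at each gluing face I would need to verify that reducibles in one factor glue to reducibles/irreducibles in the other in exactly the combination dictated by matrix multiplication. This is the same analytic verification as in the proof that $m_2$ is a chain map, but now performed uniformly across every codimension-one stratum; it is made tractable by the fixed positive scalar curvature metric on each incoming $S^3$ end, which forces the critical points there to be reducible and makes the local gluing models uniform over the parameter space. Granted this bookkeeping, summing all boundary contributions over $\ztwo$ yields (\ref{infmod}), and the argument for (\ref{infalg}) on $Z_{n+1}$ is word-for-word the same.
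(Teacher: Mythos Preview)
Your approach is the same as the paper's: identify the $\Ainf$-relation with the vanishing of the boundary of the one-dimensional fibered product $(\sigma\times\tau_1\times\cdots)\times M^+_{K_{n+1}}$, splitting the boundary contributions into (i) chain-level boundaries of the inputs and of the fibered product itself, (ii) trajectory breaking on the cylindrical ends, and (iii) the codimension-one strata of the associahedron, which by admissibility of $\mathscr D$ factor as products and produce the higher composite terms.

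One small bookkeeping correction: your attribution of the $m_1/\mu_1$ terms is incomplete. The term $m_{n+1}(m_1\sigma,\dots)$ is \emph{not} produced by the simplicial boundary of $\sigma$ alone; in the Morse--Bott model $m_1\sigma$ has both a singular-boundary piece and a piece coming from fibered products with unparametrized trajectories on the incoming $Y$-end, and you have omitted the latter. Symmetrically, $m_1 m_{n+1}(\sigma,\dots)$ requires both outgoing-end trajectory breaking \emph{and} the term $\partial\bigl((\sigma\times\cdots)\times M^+_{K_{n+1}}\bigr)$, which you do not list. The paper groups all three of these --- unparametrized breakings at every end, $\partial$ of the whole fibered product, and $(\partial\text{inputs})\times M^+$ --- together as the source of the $n{+}1$ terms involving $m_1$ or $\mu_1$. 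Once you include those missing pieces your argument is complete and matches the paper's.
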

This result is saying at a practical level that $\Cj(S^3)$ is an $\Ainf$-algebra and $\Cj(Y)$ is an $\Ainf$-module over it. We cannot use that terminology though because the compositions are not defined for all tuples of elements but only on those which satisfy the transversality conditions specified above.

\begin{proof}
First of all, the formulas make sense because of the transversality conditions we are requiring on the products of consecutive elements in our chains. The verification of the identities (\ref{infalg}) and (\ref{infmod}) then boils down to identifying the boundary strata of a fibered product of the form
\begin{equation*}
(\sigma\times\tau_1\times\cdots\times \tau_{n-1})\times M^+_{K_n}
\end{equation*}
where $M^+_{K_n}$ is any compactified moduli space of solutions parametrized by the family $K_n$ on the cobordism $W_n$. In particular we want to identify the component that lies in the singular chain complex of a given critical submanifold $[\Cr_+]$ in the outgoing end. Notice that $M^+_{K_n}$ has two kinds of boundary strata: the fibered products of moduli spaces of unparametrized trajectories of the various ends and those on $W_n$ parametrized by the family $K_n$, and strata parametrized by the faces of the associahedron $K_n$, which we denote $M^+_{\partial K_n}$
The fibered products of the first kind, together with the terms of the form
\begin{equation*}
\partial\left( (\sigma\times\tau_1\times\cdots\times \tau_{n-1})\times M^+_n\right),\qquad \left(\partial(\sigma\times\tau_1\times\cdots\times \tau_{n-1})\right)\times M^+_n
\end{equation*}
correspond to the $n+1$ terms in the formulas (\ref{infalg}) and (\ref{infmod}) involving the differentials $m_1$ or $\mu_1$. So it remains to consider the fibered products
\begin{equation*}
(\sigma\times\tau_1\times\cdots\times \tau_{n-1})\times M^+_{\partial K_n}.
\end{equation*}
By definition the families of metrics and perturbations on the boundary are products of those defined on $W_j$ by $(g_j,\p_j)$ and on $Z_k$ by $(h_k,\mathfrak{q}_k)$, so it is straightforward to identify these terms with those involving compositions $\mu_j$ and $m_k$ with $j,k\geq 2$ in the identities we want to prove.
\end{proof}
\vspace{0.5cm}

Having defined the higher compositions, we discuss in detail in which sense these are invariants of the manifolds, as stated vaguely in Theorem \ref{ainf}. If the objects involved were genuine $\Ainf$-algebras and modules, the result would say that for different choices of admissible data $\mathscr{D}$ and $\mathscr{D}'$, we can construct a weak homotopy equivalence of $\Ainf$-algebras
\begin{equation*}
h(\mathscr{E}):\Cj(S^3,\D)\rightarrow\Cj(S^3,\D')
\end{equation*}
depending of some additional data $\mathscr{E}$, and any two such weak homotopy equivalences we construct are homotopic. The similar statement holds for the $\Ainf$-modules $\Cj(Y,\mathscr{D})$.
\par
In our case we have to deal with some transversality constraint so we will define, under some suitable choice of a sequence of $\mathscr{E}$ of metrics and perturbations $(k_n,\mathfrak{r}_n)$ on the cobordisms $Z_n$, maps
\begin{equation}\label{fn}
f_n: \hat{C}_{\bullet}(S^3,n,\mathscr{E})\rightarrow \hat{C}_{\bullet}(S^3,(g'_1,\mathfrak{p}'_1))
\end{equation}
satisfying the relations of an $\Ainf$-morphism (\ref{amorp}) where
\begin{equation*}
\hat{C}_{\bullet}(S^3,n,\mathscr{E})\subset\hat{C}_{\bullet}(S^3,n,\D)
\end{equation*}
is a subcomplex so that the inclusion is a quasi-isomorphism.
\par
We start by defining the data $\mathscr{E}$, and it should not be surprising that the family of metrics and perturbations on the cobordism $Z_n$ will be parametrized by the \textit{multiplihedron} $J_n$. Recall that this is a $n-1$ dimensional polyhedron whose vertices correspond to the ways of put complete parenthesis and applying function $f$ to the string $\{0,\dots, n-1\}$. For example $J_2$ is an interval whose vertices correspond to the expressions $f(01)$ and $f(0)f(1)$, while $J_3$ is the hexagon shown in Figure \ref{J3}. The faces of the multiplihedron $J_n$ are of two kinds. The first are those parametrized by
\begin{equation*}
J_{i_1}\times \cdots \times J_{i_j}\times K_j\rightarrow J_n
\end{equation*}
for a partition $i_1+\cdots i_j=n$, and those parametrized by
\begin{equation*}
J_{n-e+1}\times K_e\rightarrow J_n
\end{equation*}
for $2\leq e\leq n$. In Figure \ref{J3}, the horizontal edge are of the second kind, while the others are of the first kind.
\begin{figure}
  \centering
\def\svgwidth{0.9\textwidth}
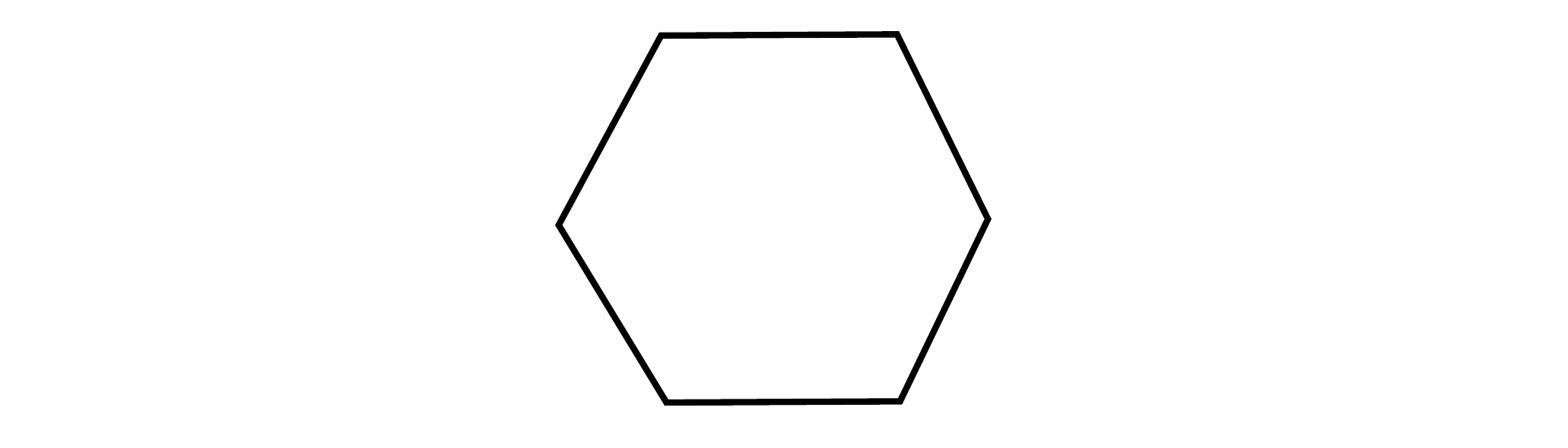
    \caption{The multiplihedron $J_3$.}
    \label{J3}
\end{figure} 
\\
\par
As usual, we define the data inductively. On the cobordism $Z_1$, which is just topogically a product $I\times Y$, we choose a metric and perturbation $(k_1,\mathfrak{r}_1)$ which agrees with the cylinders on the two given ones $(h_1,\mathfrak{q}_1)$ and $(h_1',\mathfrak{q}_1')$ near the boundary. As usual, we can choose them so that all the moduli spaces on the manifold obtained by adding cylindrical ends are regular. This choice induces a continuation map, which is denoted in our notation by $f_1$.
\par
On the cobordism $Z_2$ the family of metrics and perturbations parametrized by $J_2=[-\infty,\infty]$ all of which agree with $(h_1,\mathfrak{q}_1)$ on the incoming end and with $(h_1',\mathfrak{q}_1')$ on the outgoing end is defined as follows. The data at $-\infty$ is the one for which the incoming ends are stretched at infinity, so that the cobordism is decomposed as a disjoint union
\begin{equation*}
(Z_1\amalg Z_1)\amalg Z_2.
\end{equation*}
On the two copies of $Z_1$ we choose the data $(k_1,\mathfrak{r}_1)$ defined right above, while on $Z_2$ we choose the data $(h_2',\mathfrak{q}_2')$ defining the composition $\mu'_2$. At the other end $\infty$, we stretch the outgoing end to infinity so that the cobordism is decomposed as a union
\begin{equation*}
Z_2\amalg Z_1.
\end{equation*} 
On the first cobordism we consider the data $(h_2,\mathfrak{q}_2)$ defining the composition $\mu_2$ while on the latter we consider the data $(k_1,\mathfrak{r}_1)$ defined above. Using Proposition \ref{fundtr} can then extend the family as in the definition of the admissible data $\mathscr{D}$ to the multiplihedron $J_2$, so that when the parameter $T$ is negative enough the family is exponentially close to the one obtained by attaching the the two copies of $(Z_1,(k_1,\mathfrak{r}_1))$ and $(Z_2,(h_2',\mathfrak{q}_2'))$ with a cylinder $[0,T]\times Y$ with the product metric and perturbation with $(g_1', \p_1')$ and the similar thing for $T$ very positive, see Figure \ref{ainf22}.
\begin{figure}
  \centering
\def\svgwidth{0.9\textwidth}
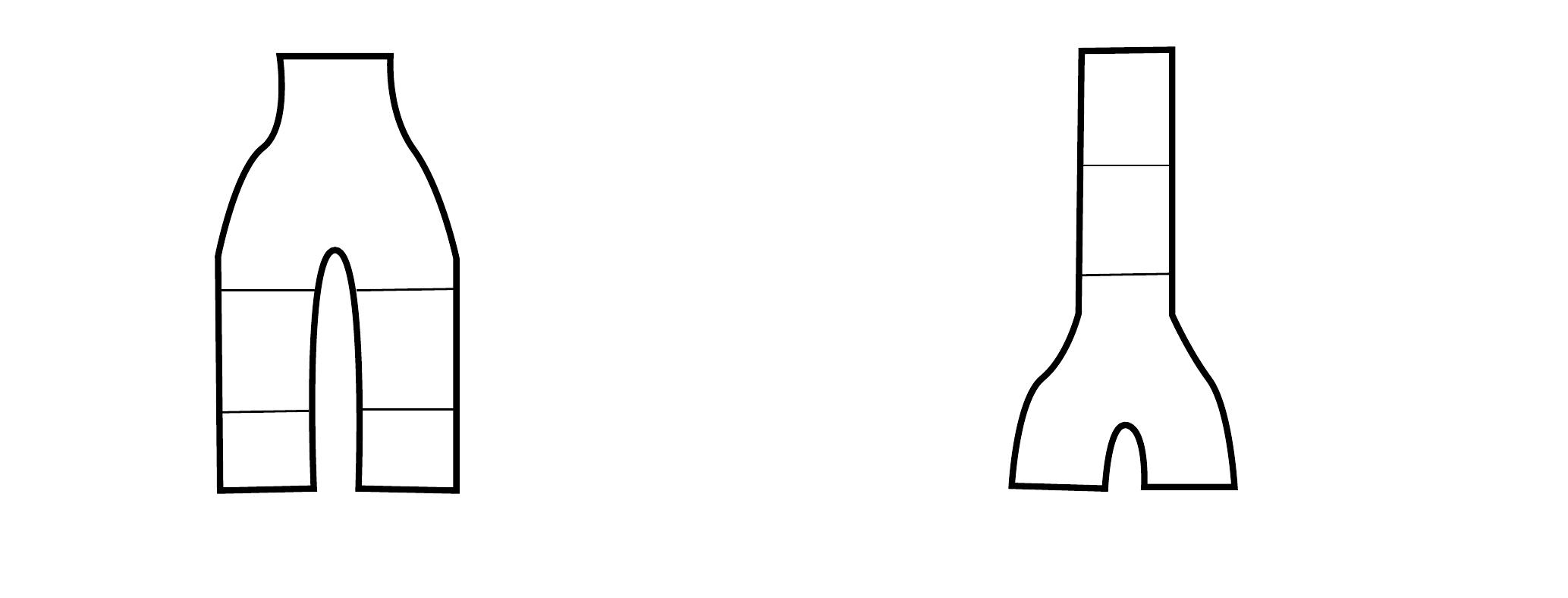
    \caption{The family of metrics and perturbations parametrized by the multiplihedron $J_2$. On the products $I\times S^3$ the metric and perturbations are the ones induced by $(h_1',\mathfrak{q}')$ on the three manifold.}
    \label{ainf22}
\end{figure} 
We can proceed inductively in the same way and define the family of data on $Z_n$ parametrized by $J_n$ as its facets can be identified with products of smaller dimensional associahedra and multiplihedra, and the data has been defined on them. The key point is that each face of the multiplihedron correspond to a (possibly disconnected) hypersurface, and two faces intersect if and only if they correspond to disjoint hypersurfaces, see Figure \ref{nearcorner}.
\begin{figure}
  \centering
\def\svgwidth{0.9\textwidth}
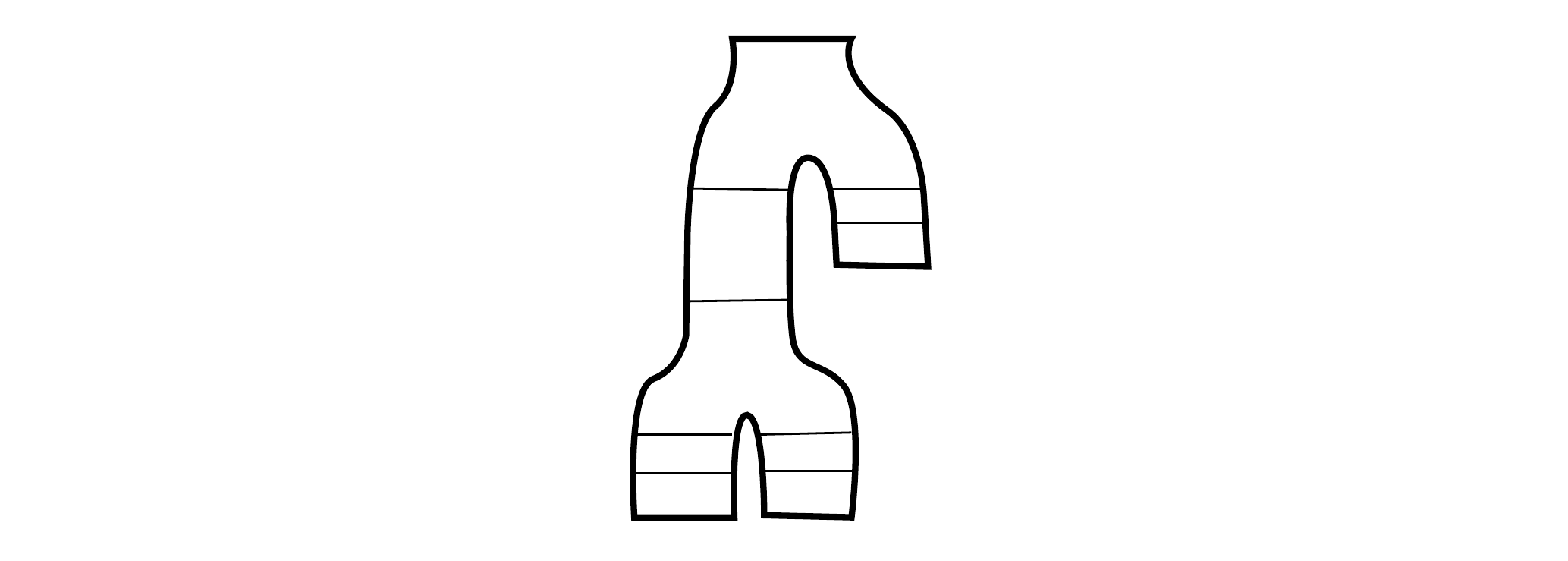
    \caption{The two parameter family parametrized by $T$ and $S$ near the corner $(f(0)f(1))f(2)$ of the associahedron $J_3$.}
    \label{nearcorner}
\end{figure}
\\
\par
We define the subcomplex $\Cj(S^3,n,\mathscr{E})$ as the subcomplex generated by all tensor products $\sigma_1\otimes \cdots \otimes \sigma_n$ so that each product of $k$ consecutive is transverse to all the parametrized moduli spaces on $Z_k^*$. The maps $f_n$ in Equation (\ref{fn}) are defined by taking fibered products with the moduli spaces. We have then the following invariance result.

\begin{thm}\label{invariance}
For a given family of data $\mathscr{E}$ constructed as above the maps $f_n$ satisfy the relations (\ref{amorp}) of an $\Ainf$-morphism with the higher compositions $\mu_n$ and $\mu_n'$. Given two choices of data $\mathscr{E}$ and $\mathscr{E}'$, there is a family of chain complexes
\begin{equation*}
\hat{C}_{\bullet}(S^3,n,\mathscr{E},\mathscr{E}')\subset \hat{C}_{\bullet}(S^3,n,\mathscr{E})\cap \hat{C}_{\bullet}(S^3,n,\mathscr{E}')
\end{equation*}
so that the inclusion induces a quasi-isomophism and there are maps
\begin{equation*}
h_n:\hat{C}_{\bullet}(S^3,n,\mathscr{E},\mathscr{E}')\rightarrow \hat{C}_{\bullet}(S^3,(g_1',\mathfrak{q}_1'))
\end{equation*}
satisfying the relations (\ref{ahom}) of an $\Ainf$ homotopy between the two $\Ainf$-morphisms $f$ and $f'$. In particular, the map induced in homology by $f_1$ is a well defined isomorphism.
\par
The analogous statement holds for the modules $\Cj(Y,\mathscr{D})$.
\end{thm}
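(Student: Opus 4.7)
The proof will largely parallel the construction of the higher compositions $\mu_n$, with the multiplihedron $J_n$ and its generalizations replacing the associahedron.

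First, to verify the $\Ainf$-morphism relations for the given maps $f_n$: one analyzes the boundary strata of the compactified moduli spaces on $Z_n^*$ parametrized by $J_n$, taken in fibered product with an input chain $\sigma_1 \otimes \cdots \otimes \sigma_n$ satisfying the transversality conditions built into $\hat{C}_{\bullet}(S^3,n,\mathscr{E})$. The boundary consists of three types of strata: (i) broken trajectories at the incoming and outgoing ends, contributing terms that differentiate the input or output chains (these combine with $\partial(\sigma_1 \otimes \cdots \otimes \sigma_n) \times M^+$ to give the terms involving $\mu_1$ and $\mu'_1$); (ii) strata over the faces of type $J_{i_1} \times \cdots \times J_{i_j} \times K_j$, which by construction of $\mathscr{E}$ carry the product families defining $f_{i_1},\dots,f_{i_j}$ glued through the associahedral family $K_j$ defining $\mu'_j$, and therefore contribute $\nu_k(f_{i_1},\dots,f_{i_k})$-type terms; (iii) strata over faces of type $J_{n-e+1} \times K_e$, which similarly contribute $f_k(\dots, \mu_l,\dots)$-type terms. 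The sum of all boundary contributions must vanish, which is precisely the $\Ainf$-morphism relation \ref{amorp}. The same boundary analysis that proved the $\Ainf$-relations for $\mu_n$ applies here, with the added input that the multiplihedron's face structure is exactly matched to the $\Ainf$-morphism identities.

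To construct the homotopy $h_n$ between $f$ and $f'$, I would introduce an auxiliary polytope $P_n$ of dimension $n$ (sometimes called the $\Ainf$-homotopy polytope or a ``composihedron'') whose boundary consists of: one codimension-one face $J_n \times \{0\}$ carrying the family $\mathscr{E}$, one face $J_n \times \{1\}$ carrying $\mathscr{E}'$, and additional faces of the forms $J_{i_1}\times\cdots\times J_{i_k}\times K_k \times I$ (up to the obvious placements of the interval) whose strata contribute precisely the three sums in relation \ref{ahom}. Concretely, one takes $P_n = J_n \times [0,1]$ with appropriate corner identifications, and chooses a family of data on $Z_n$ parametrized by $P_n$ which restricts to $\mathscr{E}$ and $\mathscr{E}'$ on the two copies of $J_n$ and which on each of the remaining faces is the product of the previously chosen families on cobordisms cut along the corresponding hypersurfaces. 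Proposition \ref{fundtr} applied inductively in $n$ produces such a regular family, since $P_n$ admits a compatible system of boundary neighborhoods assembled from those of $J_n$, $K_k$ and the interval. The subcomplex $\hat{C}_{\bullet}(S^3,n,\mathscr{E},\mathscr{E}')$ is defined by the evident transversality conditions with respect to this new family, and the inclusion is a quasi-isomorphism by the usual energy filtration argument. The maps $h_n$ are then obtained by taking fibered products with the $P_n$-parametrized moduli spaces, and the identity \ref{ahom} follows from the corresponding boundary stratification of $P_n$.

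The assertion that $f_1$ induces a well-defined isomorphism on homology is the degree-one consequence of the existence of $h_1$: applying \ref{ahom} to $n=1$ gives that $f_1 - f'_1$ is chain homotopic to zero, and running the construction in reverse (from $\mathscr{E}'$ back to $\mathscr{E}$) together with a third-order homotopy from the composed data to the identity family yields $g_1 \circ f_1 \simeq \mathbb{I}$ and $f_1 \circ g_1 \simeq \mathbb{I}$. For the $\Ainf$-module case, one replaces the multiplihedron $J_n$ by its module analogue parametrizing families on $W_n$ (with the input ordered set $\{0,\dots,n-1\}$ having the distinguished module slot $0$), and the boundary structure factors through products of associahedra, module-associahedra, and multiplihedra in the way dictated by the $\Ainf$-module morphism relations. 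The transversality input is again Proposition \ref{fundtr}, and the verification is formally identical.

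The main obstacle is the combinatorial bookkeeping: one must check that the face structure of $P_n$ (and its module analogue) matches the combinatorics of relation \ref{ahom} exactly, including signs---though the latter is absent since we work over $\ztwo$. A secondary subtlety is the simultaneous transversality for all the subfamilies of data living on the different boundary faces of $P_n$, which is handled by the inductive extension statement in Proposition \ref{fundtr} together with the observation that the spaces of admissible extensions are contractible, allowing the interpolations to be carried out coherently across all $n$.
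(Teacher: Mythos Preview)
Your approach is essentially the same as the paper's: verify the $\Ainf$-morphism relations by matching the boundary strata of $J_n$ to the terms in (\ref{amorp}), and then build the homotopy maps $h_n$ from a family parametrized by $I\times J_n$, defined inductively on $n$ by prescribing the data on $\partial(I\times J_n)$ from the previously constructed families and extending via Proposition~\ref{fundtr}. The paper's proof is terser but structurally identical.

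One small difference worth noting: to conclude that $f_1$ induces an isomorphism in homology, you argue by constructing an inverse $\Ainf$-morphism $g$ and a further homotopy showing $g_1\circ f_1\simeq\mathbb{I}$. The paper instead simply observes that $f_1$ is the continuation map induced by the cylinder cobordism $Z_1=I\times S^3$ with the interpolating data $(k_1,\mathfrak{r}_1)$, and such maps are already known to be isomorphisms in monopole Floer theory. Your argument is valid but more than is needed here.
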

\begin{proof}
The relations of an $\Ainf$-morphism follow by identifying the contributions given by the boundary strata of the multiplihedron. For the independence part, one can construct a family of data $(l_n,\mathfrak{s}_n)$ parametrized by $I\times J_n$ as follows. For $n=1$ one just chooses a homotopy constant near the boundary between $(k_1,\mathfrak{r}_1)$ and $(k_1',\mathfrak{r}'_1)$ so that at each $t$ the perturbation at the ends of $Z_1$ is the fixed one. This determines a family of data on the boundary of $I\times J_2$:
\begin{itemize}
\item on the edges $\partial I\times J_2$ we have the two families $(k_1,\mathfrak{r}_1)$ and $(k'_1,\mathfrak{r}'_1)$;
\item on the edges $I\times \partial J_2$ we have the families on $(Z_1\amalg Z_1)\amalg Z_2$ and $Z_2\amalg Z_1$ given by fixing the data on $Z_2$ and using the family $(l_1,\mathfrak{s}_1)$ on the $Z_1$ components.
\end{itemize}
We can extend this family on $\partial(I\times J_2)$ to the product as a family $(l_2,\mathfrak{s}_2)$ so that the moduli spaces are regular using Proposition \ref{fundtr}. The higher families $(l_n,\mathfrak{s}_n)$ are defined inductively is an analogous way. The maps $h_n$ are then defined by taking fibered products with the moduli spaces of $Z_n$ parametrized by $\mathfrak{q}_n$, after restricting to the suitable subcomplex on which it is defined. The map induced in homology by $f_1$ is just the map induced by the cobordism, so the result follows.
\end{proof}
\vspace{0.3cm}

\begin{remark}\label{ijp}
We have only discussed the \textit{from} version of the theory, but it is clear that the same construction leads to higher compositions (defined only on a subcomplex such that the inclusion is a quasi-isomorphism)
\begin{align*}
\check{m}_n&: \check{C}_{\bullet}^{\jmath}(Y)\otimes \Cj(S^3)\rightarrow \check{C}_{\bullet}^{\jmath}(Y)\\
\bar{m}_n&: \bar{C}_{\bullet}^{\jmath}(Y)\otimes \Cj(S^3)\rightarrow \bar{C}_{\bullet}^{\jmath}(Y)
\end{align*}
satisfying the $\Ainf$-relations. Furthermore the usual maps $i_*, j_*$ and $p_*$ are the first component of $\Ainf$-morphisms $\{i_n\}$, $\{j_n\}$ and $\{p_n\}$. The higher morphisms are obtained by considering the moduli spaces on the cobordism $W_n$ parametrized by our data, via the analogue of the formulas defining the usual maps (which are the case $W_0$, i.e. the product $I\times Y$). Here, to define the map $\check{m}_n$ it is essential that there are no irreducible solutions.
\end{remark}

\vspace{0.5cm}

Finally we discuss functoriality under a cobordism between based three manifolds $Y_0$, $Y_1$ with the respective embeddings of three balls $\iota^U_0$ and $\iota^U_1$. Suppose we are given a cobordism $X$ between them, which is also equipped with a proper embedding $\iota$ of $I\times B^3$ that restrictics to $\iota^U_i$ on the boundary of $I$. Using this additional data we can define (referring to a model independent of $Y$) the hypersurfaces on the cobordism with $n$ balls removed as in Figure \ref{cob}: $n$ of these are canonically identified with $Y_0$ (using the given embedding) and $n$ with $Y_1$.

\begin{figure}
  \centering
\def\svgwidth{\textwidth}
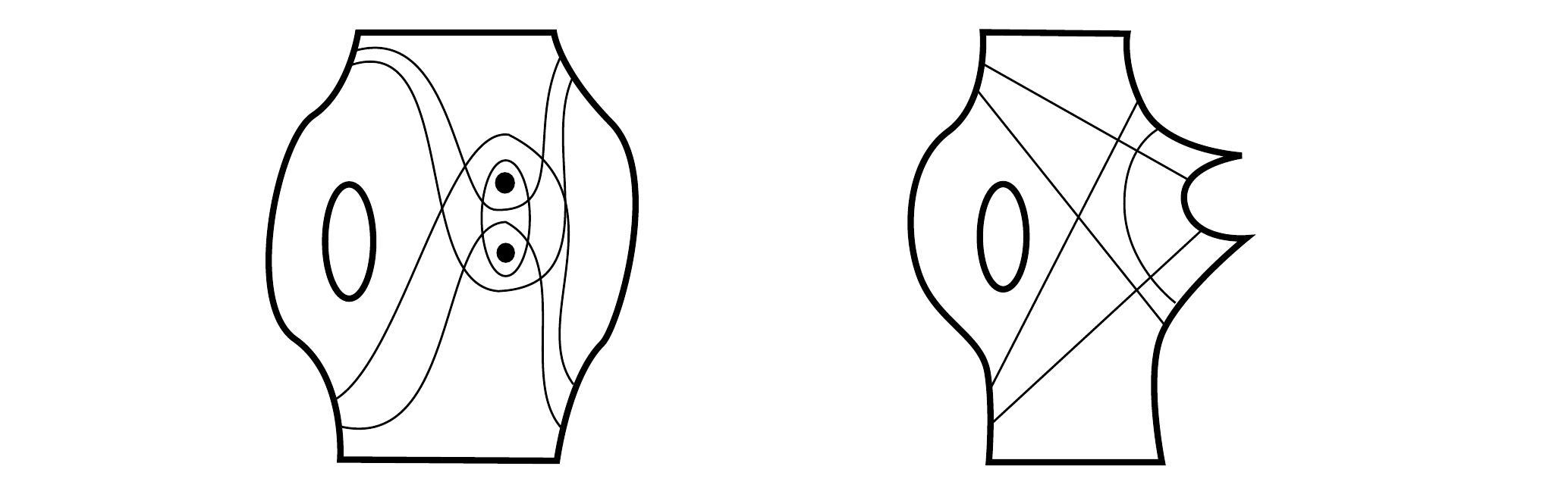
    \caption{The hypersurfaces in the cobordism $X$ with two balls removed.}
    \label{cob}
\end{figure} 

Again we can define a family of metrics and perturbations parametrized by the associahedron $K_n$ inductively, and by taking fibered products the maps (define on some suitable subcomplex whose inclusion induces a quasi-isomorphism)
\begin{equation*}
\hat{m}(X)_n:\Cj(Y_0, n)\rightarrow \Cj(Y_1)
\end{equation*} 
which satisfy the relations of an $\Ainf$-morphism of $\Ainf$-modules over $\Cj(S^3)$.

\vspace{1cm}
\section{The connected sum formula}\label{formula}

In this section we discuss the main result of the paper, Theorem \ref{conn}. We start by defining the suitable version of the $\Ainf$-tensor product that will be used. This is based on the construction we have discussed in Section \ref{algebra}, which has to be adapted to deal with transversality issues.
\par
Given two three manifolds $Y_0,Y_1$ equipped with self-conjugate spin$^c$ structures $\spin_0$ and $\spin_1$, fix data $\mathscr{D}_0$ and $\mathscr{D}_1$ so that the higher compositions are defined and the data on $Z_n$ agrees for every $n$. Define the subcomplex
\begin{equation*}
\Cj(Y_0,Y_1,n)\subset\Cj(Y_0)\otimes \hat{C}(S^3)^{\otimes n}\otimes \Cj(Y_1)
\end{equation*}
generated by elements $\sigma_0\otimes \tau_1\otimes \cdots\otimes \tau_n\otimes \sigma_1$ so that on which proper substring the corresponding higher composition is well defined. Of course the inclusion is a quasi-isomorphism. We define the chain complex $\hat{C}(Y_1)\boxtimes\left(\hat{C}(Y_2)\right)^{\mathrm{opp}}$ to be the vector space
\begin{equation*}
\bigoplus_{n=0}^{\infty} \Cj(Y_0,Y_1,n)
\end{equation*}
equipped with the differential
\begin{align*}
\partial(\sigma_0\otimes \tau_1\otimes\cdots\tau_n\otimes \sigma_2)&=\sum_{i=0}^{n} m_{i+1}(\sigma_0\otimes \tau_1\otimes\cdots\tau_{i-1})\otimes \tau_i\otimes \cdots \otimes\tau_n\otimes \sigma_1\\
&=\sum_{k=1}^{n}\sum_{l=1}^{n+1-l}\sigma_0\otimes \tau_1\otimes\cdots\otimes \mu_l(\tau_k\otimes\cdots\otimes \tau_{k+l-1})\otimes \otimes \tau_n\otimes \tau_1\\
&=\sum_{i=0}^{n}\sigma_0\otimes\tau_1\otimes\cdots\otimes m_{i+1}^{\mathrm{opp}}(\tau_{n-i+1}\otimes\cdots\otimes \tau_n\otimes \sigma_1).
\end{align*}
The algebraic results in Section \ref{algebra} hold with the same proofs for this modified version of the tensor product.
\\
\par
Given two objects in the based surgery precategory we now construct a chain map
\begin{equation*}
\Psi: \Cj(Y_1)\boxtimes\left(\Cj(Y_2)\right)^{\mathrm{opp}}\rightarrow \Cj(Y_1\#Y_2).
\end{equation*}
Again this will be defined on a subcomplex such that the inclusion induces a quasi-isomorphism. Consider the cobordism $X^{\#}$ described in Section \ref{HMconn} by attaching a $1$-handle to $I\times(Y_1\amalg Y_2)$. Fix regular metrics and perturbations on it coinciding with the given ones on the incoming ends. We also consider the cobordism $X^{\hash}_n$ obtained by removing $n$ standard balls from the attached $1$-handle.
\par
Following the scheme of the previous sections, we define a family of metrics and perturbations on $X^{\#}_n$ parametrized by the associahedron $K_{n+1}$ inductively as follows. For $n=0$ we have already defined the data on $W^{\#}$. Inductively, the cobordism $X^{\hash}_n$ contains a set of hypersurfaces dividing it into cobordisms on which the data has already been defined, see Figure \ref{family}. These can be constructed with a canonical identification with $Y_0$, $Y_1$ and $S^3$ using the same approach of Section \ref{HMconn}. In particular, we have defined the data on the boundary of $K_{n+1}$, and we can extend it so that near each face it corresponds to stretching along the corresponding hypersurface.
\begin{figure}
  \centering
\def\svgwidth{0.9\textwidth}
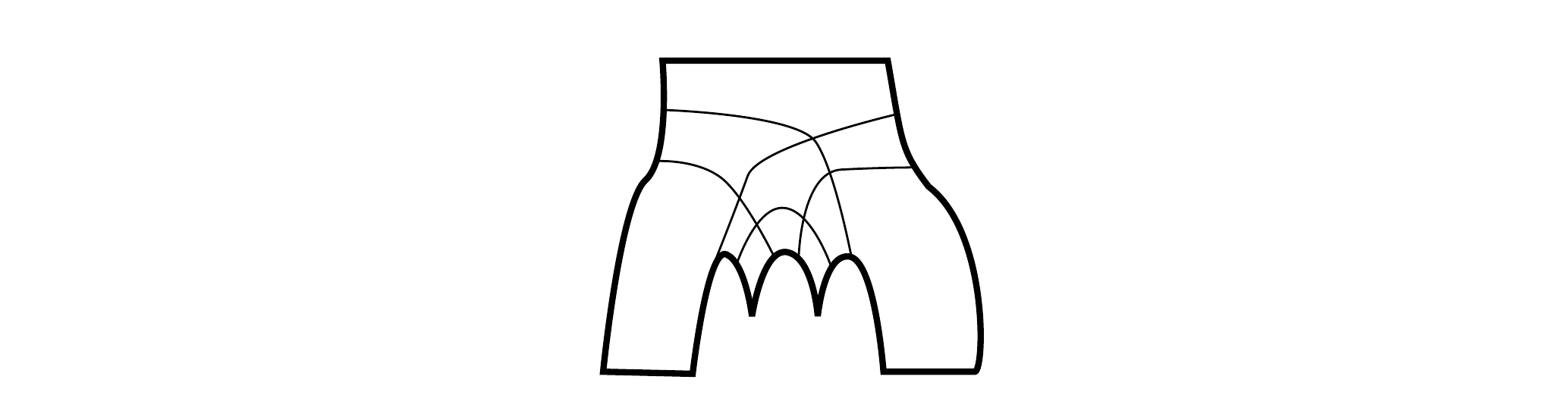
    \caption{The separating hypersurfaces in the cobordism $X^{\#}_2$.}
    \label{family}
\end{figure} 
\par
We define the map
\begin{equation*}
\Psi_n: \Cj(Y_0,Y_1,n)\rightarrow \Cj(Y_0\#Y_1).
\end{equation*}
by taking fibered products with the compactified moduli spaces on $(X^{\#}_n)^*$ parametrized by the family of metrics and pertubations. As usual we restrict to the subcomplex generated by the products of chains which are transverse to all the moduli spaces.
\par
It is straightforward to check at this point that $\Psi$ is actually a chain map, and our goal is to prove that this map is a quasi-isomorphism. To show this will take the approach outlined in Section \ref{HMconn}. In particular, for a fixed self-conjugate spin$^c$ structure $\spin_0$ on a three manifold $(Z,\jmath^U,\jmath^{\hash})$, we have two functors
\begin{align*}
\mathscr{F}(Y)&= H_*\left( \Cj(Y)\boxtimes \Cj(Z,\spin_0)^{\mathrm{opp}}\right)\\
\mathscr{G}(Y)&= \bigoplus_{[\spin\lvert_Z]=\spin_0}\HSf_{\bullet}(Y\# Z, [\spin]).
\end{align*}
Here on the $Y$ summand we sum over all spin$^c$ structures (even not self-conjugate ones), while we consider only the given on $Z$. It is straightforward to check that the construction above is compatible with these restrictions.
The map $\Psi$ defined above determines for each $Y$ a map
\begin{equation*}
\Psi(Y): \mathscr{F}(Y)\rightarrow \mathscr{G}(Y).
\end{equation*}
The main result of the paper, Theorem \ref{conn}, follows if we can show that the two conditions in Theorem \ref{floerf} hold. The first one is straightforward.
\begin{prop}
The map $\Psi(S^3)$ is an isomorphism.
\end{prop}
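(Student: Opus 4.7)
The plan is to identify both $\mathscr{F}(S^3)$ and $\mathscr{G}(S^3)$ abstractly with $\HSf_{\bullet}(Z,\spin_0)$, and then to verify that $\Psi(S^3)$ realizes this common identification. The right-hand side is immediate: the standard local model on the connected sum region determined by $\jmath^{\#}$ gives a canonical identification $S^3 \# Z \cong Z$ carrying the induced self-conjugate spin$^c$ structure to $\spin_0$, and the direct sum defining $\mathscr{G}(S^3)$ collapses to the single term $\HSf_{\bullet}(Z,\spin_0)$.

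For the left-hand side, I apply the Eilenberg--Moore spectral sequence of Lemma~\ref{algEM} to $\Cj(S^3) \boxtimes \Cj(Z,\spin_0)^{\mathrm{opp}}$. Its $E^2$-page is
\[
\Tor^{\Rin}_{*,*}\bigl(\HSf_{\bullet}(S^3),\HSf_{\bullet}(Z,\spin_0)\bigr) = \Tor^{\Rin}_{*,*}\bigl(\Rin,\HSf_{\bullet}(Z,\spin_0)\bigr),
\]
where $\Rin$ is regarded as a right module over itself. Since $\Rin$ is free of rank one over itself, this $\Tor$ vanishes for $p \neq 0$ and reduces to $\HSf_{\bullet}(Z,\spin_0)$ in homological degree $0$. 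The spectral sequence thus collapses at $E^2$, yielding $\mathscr{F}(S^3) \cong \HSf_{\bullet}(Z,\spin_0)$.

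It remains to verify that $\Psi(S^3)$ induces this algebraic identification. The map $\Psi$ is compatible with the tensor-length filtration on the domain (equipped with the trivial filtration on the target), and the leading component $\Psi_0$ is the chain map induced by the $1$-handle cobordism $X^{\#}$ from $S^3 \amalg Z$ to $Z$ with no balls removed. At the $E^2$ level only the column $p=0$ survives, and the induced map is the Seiberg--Witten cobordism map
\[
\HSf_{\bullet}(S^3) \otimes_{\Rin} \HSf_{\bullet}(Z,\spin_0) \to \HSf_{\bullet}(Z,\spin_0)
\]
of $X^{\#}$. Under the canonical isomorphism $\Rin \otimes_{\Rin} \HSf_{\bullet}(Z,\spin_0) \cong \HSf_{\bullet}(Z,\spin_0)$ this is the identity, provided that $\Psi_0$ realizes the module action.

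The main obstacle is this last geometric step: verifying that the map on Floer homology induced by $X^{\#}$ really is the module action, so that pairing with the unit of $\Rin$ yields the identity. I would establish it by a neck-stretching argument on $X^{\#}$ along the hypersurface separating the incoming $S^3$ end from the rest of the cobordism: in the limit the cobordism degenerates into a ``cap'' on $S^3$ glued to a product cobordism $I \times Z$, the moduli spaces factor accordingly, and the resulting count recovers the standard formula (\ref{chainmod}) for the module structure on $\HSf_{\bullet}(Z,\spin_0)$. The transversality framework of Proposition~\ref{fundtr} ensures that the one-parameter family realizing the degeneration can be chosen so that all the moduli spaces are regular, completing the identification and hence the proof that $\Psi(S^3)$ is an isomorphism.
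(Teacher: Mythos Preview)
Your proof is correct and follows essentially the same route as the paper: both use the Eilenberg--Moore spectral sequence (Lemma~\ref{algEM}), observe that it collapses at $E^2$ because $\HSf_{\bullet}(S^3)\cong\Rin$ is free over itself, and then check that $\Psi_0$ sends $1\otimes x$ to $x$. The only difference is in this last step: the paper simply asserts that $\Psi_0$ is the module multiplication (since for $Y_0=S^3$ the one-handle cobordism $X^{\#}$ is diffeomorphic, as a cobordism from $S^3\amalg Z$ to $Z$, to $(I\times Z)\setminus B^4$), whereas your neck-stretching paragraph is a somewhat more roundabout way of reaching the same conclusion---note that your degeneration into a ``cap'' plus a product does not literally recover formula~(\ref{chainmod}), which is defined on the punctured cylinder, so the direct identification of cobordisms is the cleaner justification.
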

\begin{proof}
As $\HSf_{\bullet}(S^3)$ is $\Rin$, the Eilenberg-Moore spectral sequence associated to $\Cj(Y)\boxtimes \Cj(Z,\spin_0)^{\mathrm{opp}}$ from Lemma \ref{algEM} collapses at the second page, and we have that the final result is the zero filtration group $\Rin\otimes_{\Rin} \HSf_{\bullet}(Z,\spin_0)$. In particular each homology class is represented by an element of the form $1\otimes x$. As the map induced in homology by $\Psi_0$ is just the module multiplication, this element is mapped to $x$, and the result follows.
\end{proof}
\vspace{0.5cm}

The second condition requires more work, and we first briefly recall how the exact triangles in $\Pin$-monopole Floer homology are constructed (see \cite{Lin2} for more details). Let $Y_1,Y_2$ and $Y_3$ a surgery triple. Of the three surgery cobordisms $W_i$ from $Y_i$ to $Y_{i+1}$ exactly one is not spin, let us assume without loss of generality that it is $W_3$. Denote by $({C}_i,\partial_i)$ the from $\Pin$-monopole Floer complex of $Y_i$, and let $\{f_n^i\}_{n\in\mathbb{N}}$ be the family induced by $W_i$, so that in particular $f_1^i$ is the usual map induced by the cobordism. Then the composition $f_1^2\circ f_1^1$ is nullhomotopic by a nullhomotopy $h$ obtained by looking at a one parameter family of metrics on the composite $W_2\circ W_1$, see Figure \ref{h1}. The key point is that the composite is the blow up of a non-spin cobordism (indeed, $\bar{W}_3$), and in fact the analogous statement is false (in general) for the other two composites. This is because when stretching the blow-up to infinity there are no self-conjugate spin$^c$ structures involved so we can use Morse perturbations on $S^3$ and the contributions of the moduli spaces on $\overline{\mathbb{C}P}^2$ cancel in pairs.
\begin{figure}
  \centering
\def\svgwidth{0.9\textwidth}
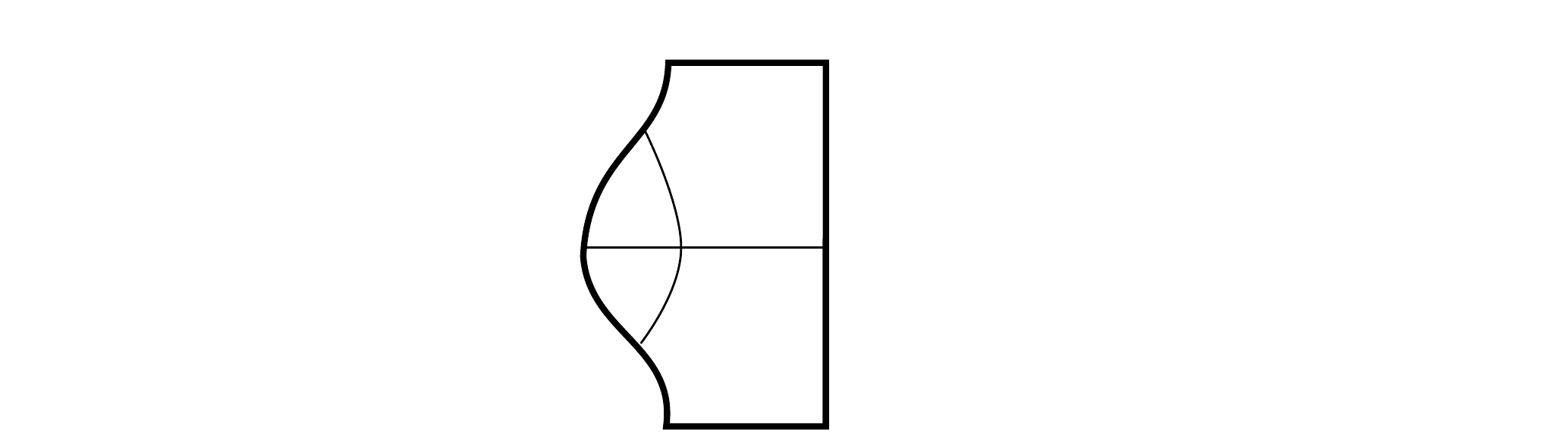
    \caption{The composite $W_2\circ W_1$ contains a separating hypersurface diffeomorphic to $S^3$. Indeed it is diffeomorphic to the blowup of $\bar{W}_3$.}
    \label{h1}
\end{figure} 

We can form the mapping cone $M_{f_1^1}$ whose underlying space is ${C}_2\oplus {C}_1$ with differential
\begin{equation*}
\partial=\begin{pmatrix}
\partial_2 & f_1^1\\
0 & \partial_1.
\end{pmatrix}
\end{equation*}
This fits in an exact triangle 
\begin{center}\label{cone1}
\begin{tikzpicture}
\matrix (m) [matrix of math nodes,row sep=2em,column sep=1.5em,minimum width=2em]
  {
  H_*({C}_2) && H_{*}(M_{f_1^1})\\
  &H_*(C_1) &\\};
  \path[-stealth]
  (m-1-1) edge node [above]{$i_*$} (m-1-3)
  (m-2-2) edge node [left]{$(f_1^1)_*$} (m-1-1)
  (m-1-3) edge node [right]{$p_*$} (m-2-2)  
  ;
\end{tikzpicture}
\end{center}
We can also form the iterated mapping cone $C$ whose underlying vector space is ${C}_3\oplus{C}_2\oplus {C}_1$ and has differential
\begin{equation*}
\partial=\begin{pmatrix}
\partial_3 & f_1^2 & h\\
0 & \partial_2 & f_1^1\\
0 & 0 & \partial_1.
\end{pmatrix}
\end{equation*}
This is a differential because $h$ is a nullhomotopy of $f_1^2\circ f_1^1$. The key result of \cite{Lin2} is that this chain complex is acyclic, so that the connecting homomorphism
\begin{equation*}
\delta_*: H(M_{f_1^1})\rightarrow H(C_3)
\end{equation*}
is an isomorphism, and we obtain the triangle
\begin{center}\label{cone2}
\begin{tikzpicture}
\matrix (m) [matrix of math nodes,row sep=2em,column sep=1.5em,minimum width=2em]
  {
  H_*({C}_2) && H_{*}(C_3)\\
  &H_*(C_1) &\\};
  \path[-stealth]
  (m-1-1) edge node [above]{$(f_1^2)_*$} (m-1-3)
  (m-2-2) edge node [left]{$(f_1^1)_*$} (m-1-1)
  (m-1-3) edge node [right]{$p_*\circ \delta_*^{-1}$} (m-2-2)  
  ;
\end{tikzpicture}
\end{center}
To show that the iterated mapping cone is acyclic, we define a map $G$ defined blockwise whose bottom left entry is $f_3^1$, the lower diagonal terms are the analogues of the homotopy $h$ for the other two composites, and the diagonal terms are obtained by counting solutions on the triple composites parametrized by a pentagon of metrics. We then show that the chain map $\partial\circ G+G\circ \partial$ (which is of course also nullhomotopic) is a quasi-isomorphism.
\\
\par
The homotopy $h$ described above can be enhanced to an actual $\Ainf$ nullhomotopy $\{h_i\}$ for the $\Ainf$-composition $f_1^2\circ f_1^1$. For example the higher homotopy $h_2$ is obtained by constructing a family of data parametrized by the associahedron $K_4$ by stretching along the hypersurfaces shown in Figure \ref{h2}. Again in the definition of these we use the extra data encoded in the morphisms of the based surgery precategory. Taking fibered products with the parametrized moduli spaces induces map on a subcomplex whose inclusion is a quasi-isomorphism. The main observation is that the stratum corresponding to $S^3$ being stretched to infinity does not contribute because the moduli spaces on $\overline{\mathbb{C}P}^2$ with a ball removed cancel in pairs (again because $W_3$ is not spin). The other four strata of $\partial K_4$ correspond to the sum
\begin{equation*}
h_1(m_2(\x,r))+m_2(h_1(\x),r)+f_2^2(f_1^1(\x),r)+f_1^2(f_2^1(\x,r)),
\end{equation*}
so $h_2$ provides a nullhomotopy of this map.
\begin{figure}
  \centering
\def\svgwidth{0.9\textwidth}
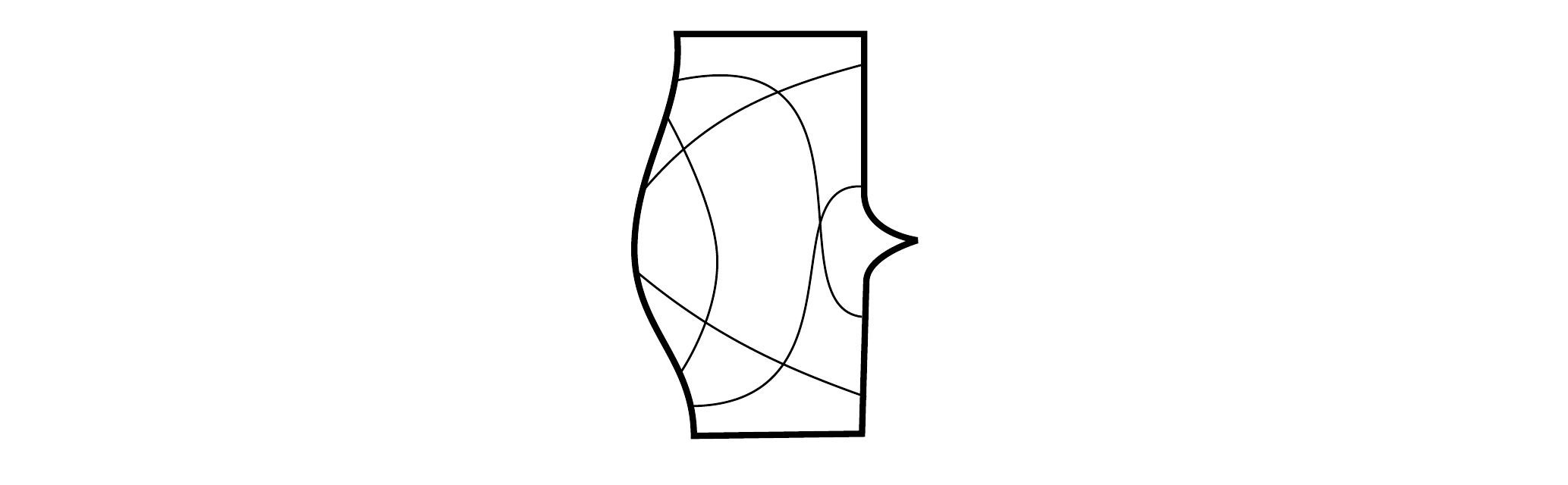
    \caption{The five hypersurfaces defining the family of metrics and perturbations of the composition $W_2\circ W_1$ with a ball removed.}
    \label{h2}
\end{figure} 
\\
\par
The next is the last thing to check in order to prove Theorem \ref{conn}.
\begin{prop}
The functors $\mathscr{F}$ and $\mathscr{G}$ are Floer functors and $\Psi$ is a natural transformation between them.
\end{prop}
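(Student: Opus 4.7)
The three assertions to verify are: (i) $\mathscr{G}$ sends surgery triples to exact triangles, (ii) $\mathscr{F}$ does as well, and (iii) $\Psi$ is natural. I address them in order.

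For (i), a surgery triple $(Y_1,Y_2,Y_3)$ in the based surgery precategory arises from Dehn fillings of a torus boundary of a manifold $N$ embedded in each $Y_i$ away from the balls $\iota^U,\iota^{\hash}$. Performing the connected sum with $(Z,\jmath^{\hash})$ in the $\iota^{\hash}$-regions therefore yields a surgery triple $(Y_1\# Z,Y_2\# Z,Y_3\# Z)$, and by construction the morphisms $\mathscr{G}(W_i)$ are precisely the cobordism maps for this new triple (after summing over the relevant spin$^c$ structures). The exact triangle then follows from the $\Pin$-monopole Floer surgery triangle of \cite{Lin2}.

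For (ii), the surgery cobordism $W_i$ induces an $\Ainf$-module homomorphism $\{f^i_n\}$ by Theorem~\ref{ainf}, and as in the discussion around Figure~\ref{h2} the composition $f^2\circ f^1$ admits an $\Ainf$-nullhomotopy $\{h^{(12)}_n\}$, since a positive scalar curvature metric on the intermediate $S^3$ kills the irreducibles and the reducible contributions on $\CPbar$ cancel in pairs (as $W_3$ is not spin). Analogous higher-order data, parametrized by pentagon-shaped families of metrics on triple composites as in \cite{Lin2}, assemble the iterated $\Ainf$-mapping cone $\mathcal{C}$ of the $\{f^i\}$, $\{h^{(ij)}\}$, etc., into an acyclic $\Ainf$-module over $\Cj(S^3)$. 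The crucial observation is then purely algebraic: iterated cone formation commutes with the $\Ainf$-tensor product on the right, so $\mathcal{C}\boxtimes \Cj(Z,\spin_0)^{\mathrm{opp}}$ is precisely the iterated mapping cone assembled from the tensored chain maps $\tilde{f}^i$ and tensored nullhomotopies $\tilde{h}^{(ij)}$ that define the candidate triangle for $\mathscr{F}$. By the Eilenberg-Moore spectral sequence of Lemma~\ref{algEM} applied to $\mathcal{C}\boxtimes \Cj(Z,\spin_0)^{\mathrm{opp}}$, the vanishing of $H_*(\mathcal{C})$ forces the $E^2$-page to vanish, whence the tensored iterated cone is acyclic, which is equivalent to the exact triangle for $\mathscr{F}$. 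The main technical obstacle is verifying the acyclicity of the $\Ainf$-iterated cone $\mathcal{C}$ itself, which requires lifting the map $G$ from \cite{Lin2} to the full $\Ainf$ setting by including all higher composition data on the triple composite cobordisms.

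For (iii), given a surgery cobordism $W$ from $Y_0$ to $Y_1$, naturality amounts to a chain homotopy between $\mathscr{G}(W)\circ \Psi(Y_0)$ and $\Psi(Y_1)\circ \mathscr{F}(W)$. Both maps are defined by counting solutions on the composite cobordism $X^{\#}_n\cup (W\natural(I\times Z))$, topologically equal to $(W\amalg(I\times Z))\cup X^{\#}_n$, equipped with two limiting families of metrics and perturbations: one in which the intermediate $Y_0\#Z$-hypersurface is stretched to infinity, the other in which the intermediate $Y_1\amalg Z$-hypersurface is stretched. These two limits occupy opposite facets of a new parametrizing polytope obtained by adjoining an interval to the associahedron $K_{n+1}$ controlling $\Psi$; the remaining facets correspond to stretching along already-treated hypersurfaces and carry the admissible data previously chosen. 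Extending to the interior via Proposition~\ref{fundtr} and counting parametrized moduli spaces produces the desired chain homotopy, which descends to the $\Ainf$-tensor products and thus to naturality on homology.
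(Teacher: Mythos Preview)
Your approach is essentially the paper's, with one clean variation and one genuine omission.

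\textbf{On (ii).} Your spectral-sequence argument is a tidy alternative to the paper's. The paper identifies the iterated mapping cone for the $\mathscr{F}$-triangle with $C\boxtimes N$ (where $C$ is the ordinary iterated cone of \cite{Lin2} endowed with its $\Ainf$-module structure) and then re-runs the $G$-argument directly on the tensor product. You instead observe that the acyclicity of $C$ as a chain complex is already the main result of \cite{Lin2}, and then the Eilenberg--Moore spectral sequence of Lemma~\ref{algEM} forces $C\boxtimes N$ to be acyclic because its $E^2$-page is $\Tor^{\Rin}(0,H_*(N))=0$. This is correct and slightly cleaner. However, your stated ``main technical obstacle'' is misplaced: you do \emph{not} need to lift $G$ to the $\Ainf$ setting. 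The acyclicity you need is acyclicity of the underlying chain complex of $\mathcal{C}$, and that complex is literally the iterated cone of \cite{Lin2}; the $\Ainf$-module structure on $\mathcal{C}$ (which requires the higher $h_n$ and $f^i_n$) is needed only to form $\mathcal{C}\boxtimes N$, not to prove acyclicity.

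\textbf{On (iii).} There is a gap. Showing that $\Psi$ commutes up to homotopy with the cobordism-induced maps $f^i$ establishes naturality with respect to morphisms in $\textsc{sur}_*$, but the five-lemma argument in Theorem~\ref{floerf} requires $\Psi$ to induce a morphism of \emph{exact triangles}, and the third map in each triangle is $p_*\circ\delta_*^{-1}$, built from the connecting homomorphism of the iterated cone rather than from a morphism in the precategory. The paper handles this by packaging the homotopies $K=\{k_n\}$ into a chain map $\mathrm{Cone}(\Psi):\mathrm{Cone}(f^1\boxtimes 1_N)\to\mathrm{Cone}(F^1)$, then extending the same construction to the iterated cones to show that the square
\[
\begin{CD}
H_*(\mathrm{Cone}(f^1\boxtimes 1_N)) @>{\delta_*}>> \mathscr{F}(Y_3)\\
@V{\mathrm{Cone}(\Psi)}VV @VV{\Psi(Y_3)}V\\
H_*(\mathrm{Cone}(F^1)) @>{\Delta_*}>> \mathscr{G}(Y_3)
\end{CD}
\]
commutes. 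Your sketch stops at the homotopy $K$ and does not address this compatibility with the connecting maps; without it the five lemma cannot be invoked.
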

\begin{proof}
It is clear that $\mathscr{G}$ is a Floer functor, as it follows from the standard surgery exact triangle by restricting to a fixed self-conjugate spin$^c$ structure on $Z$. To show that $\mathscr{F}$ is also a Floer functor we adopt the same approach as the proof of exactness of the surgery exact triangle. We will denote for $\Cj(Z,\spin_0)^{\mathrm{opp}}$ by $N$ and treat our objects as actual $\Ainf$-algebras and modules for simplicity of notation. The mapping cone of the chain map $f^1_1$ has a natural $\Ainf$-module structure given by
\begin{equation*}
m_n\left( (\x_1,\x_2), r_1,\cdots, r_{n-1}  \right)=\left(m_n(\x_1,r_1,\dots, r_{n-1}), f_n^1(\x_1,r_1,\dots, r_{n-1})+m_n(\x_2,r_1,\dots, r_{n-1})\right)
\end{equation*} 
There is a natural identification between the chain complexes $M_{(f^1\boxtimes 1_N)}$ and $M_{f^1}\boxtimes N$. Similarly the composition of the maps $f^2\boxtimes 1_N$ and $f^1\boxtimes 1_N$ is nullhomotopic, the nullhomotopy given by $h\boxtimes 1_N$.  Hence we can form the iterated mapping cone, which can also be identified with $C\boxtimes N$, where we consider the natural $\Ainf$-structure on $C$. Now the same construction outlined above shows that this tensor product is acyclic, so that the connecting homomorphism
\begin{equation*}
H_*(M_{f^1\boxtimes 1_N})\rightarrow H_*(C_3\boxtimes N)
\end{equation*}
is an isomorphism. Hence we get an exact triangle between the Floer groups
\begin{center}\label{coneN}
\begin{tikzpicture}
\matrix (m) [matrix of math nodes,row sep=2em,column sep=1.5em,minimum width=2em]
  {
  H_*({C}_2\boxtimes N) && H_{*}(C_3\boxtimes N)\\
  &H_*(C_1\boxtimes N) &\\};
  \path[-stealth]
  (m-1-1) edge node [above]{$(f^2\boxtimes 1_N)_*$} (m-1-3)
  (m-2-2) edge node [left]{$(f^1\boxtimes 1_N)_*$} (m-1-1)
  (m-1-3) edge node [right]{$p_*\circ \delta_*^{-1}$} (m-2-2)  
  ;
\end{tikzpicture}
\end{center}
It is customary to show that the construction is well defined (i.e. independent of the additional choices in a canonical way) so that $\mathscr{F}$ is a Floer functor.
\par
We need now to show that the map $\Psi$ is a natural transformation. Consider the diagram
\begin{center}\label{cone2}
\begin{tikzpicture}
\matrix (m) [matrix of math nodes,row sep=2.5em,column sep=2.5em,minimum width=2em]
  {
  \hat{C}_{\bullet}(Y_1\#Z) & \hat{C}_{\bullet}(Y_2\#Z)\\
  C_1\boxtimes N & C_2\boxtimes N\\};
  \path[-stealth]
  (m-1-1) edge node [above]{$F^1$} (m-1-2)
  (m-2-1) edge node [left]{$\Psi(Y_1)$} (m-1-1)
  (m-2-1) edge node [above]{$f^1\boxtimes 1_N$} (m-2-2)
  (m-2-2) edge node [right]{$\Psi(Y_2)$} (m-1-2)  
  ;
\end{tikzpicture}
\end{center}
where the top map is the one induced by the cobordism $X^{\hash}_1\# (I\times Z)$ (recall the the connected sum is performed away from the knot on which we do surgery). Here we use the embeddings $\iota^{\hash}$ and $\jmath^{\hash}$ to perform the connected sum. This diagram commutes up to a homotopy $K$ that we now describe, so that the maps induced in homology commute.
 The map
\begin{equation*}
k_n: C_1\otimes \Cj(S^3)\otimes N
\end{equation*}
is defined by looking at moduli spaces parametrized by a family of metrics and perturbations on the cobordism $(W_1\hash I\times Z)\circ X_{\#}$ with $n-1$ open balls removed, see Figure \ref{nat1} for when $n$ is $1$.
\begin{figure}
  \centering
\def\svgwidth{0.9\textwidth}
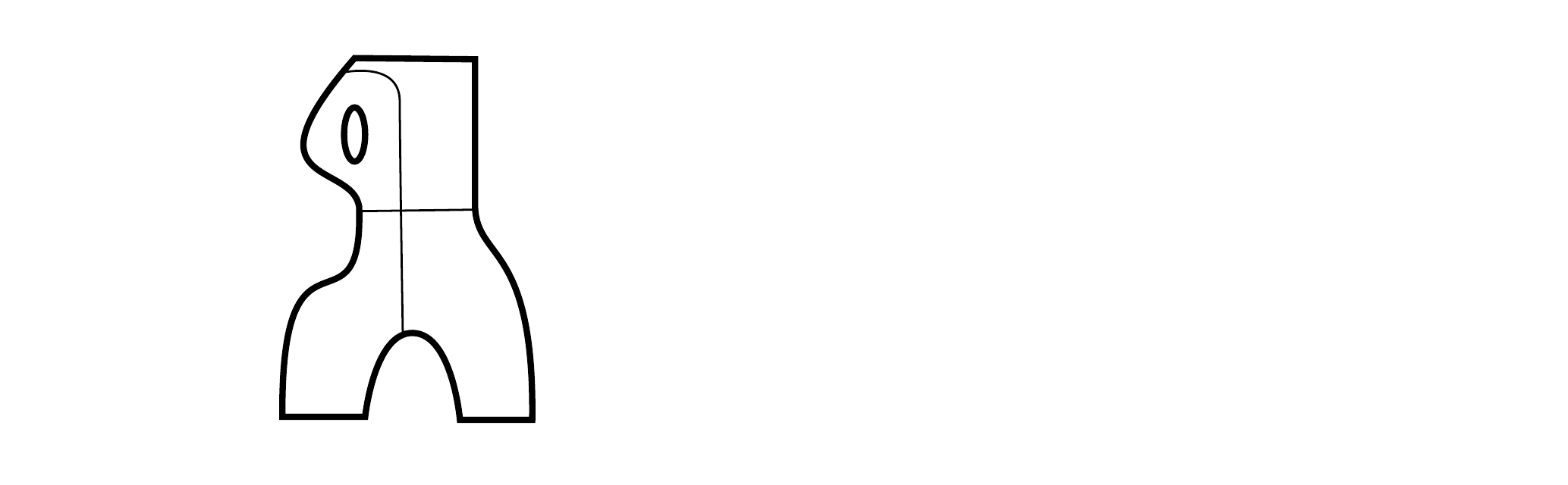
    \caption{The hypersurfaces on the composite $(W_1\hash I\times Z)\circ X^{\#}_n$. The cobordism $W_1\hash I\times Z$ is the upper half in the pictures.}
    \label{nat1}
\end{figure} 
The first case on the right defines a map $k_1$ satisfying
\begin{equation*}
\partial k_1(\x,\mathbf{z})+k_1(\partial \x,\mathbf{z})+k_1(\x,\partial \mathbf{z})=F^1(\Psi(Y_1)(\x,\mathbf{z}))+\Psi(Y_2)(f_1^1(\x),\mathbf{z}),
\end{equation*}
i.e. it is a homotopy among the composition of the chain maps. Similarly the figure on the right gives rise to the relation
\begin{align*}
\partial k_2(\x,r,\mathbf{z})&= k_2(\partial \x,r,\mathbf{z})+k_2(\x,\partial r,\mathbf{z})+k_2(\x,r,\partial \mathbf{z})\\
&+ k_1(m_2(\x,r),\mathbf{z})+k_1(\x,m_2(r,\mathbf{z}))\\
&+ \Psi(Y_2)(f_1^1(\x),r,\mathbf{y})+ \Psi(Y_2)(f_2^1(\x,r),\mathbf{z})+ F^1(\Psi_1(Y_1)(\x,r,\mathbf{z})).
\end{align*}
Notice that the first two lines involve the differential of $\x\otimes r\otimes \mathbf{z}$ in the chain complex so that we have a chain homotopy between the two composites. \par
Using this map we can we can construct the chain map
\begin{align*}
\mathrm{Cone}(\Psi)&:\mathrm{Cone}(f_1\boxtimes 1_N)\rightarrow \mathrm{Cone}(F^1)\\
\mathrm{Cone}(\Psi)&=
\begin{pmatrix}
\Psi(Y_2) & K \\
0  & \Psi(Y_1)
\end{pmatrix}.
\end{align*}
The map induced in homology is independent of the choices made, and it commutes with the others in the mapping cone exact triangles. This again follows because our construction are performed using the extra data provided by the morphisms in the based surgery precategory and the space of metrics and perturbations is contractible.
\begin{center}\label{cone2}
\begin{tikzpicture}
\matrix (m) [matrix of math nodes,row sep=2.5em,column sep=2.5em,minimum width=2em]
  {
   \cdots&\mathscr{G}(Y_2)& H_*(\mathrm{Cone}(F^1))& \mathscr{G}(Y_1) & \cdots\\
  \cdots & \mathscr{F}(Y_2) & H_*(\mathrm{Cone}(f_1\boxtimes 1_N)) & \mathscr{F}(Y_1) &\cdots \\ };
  \path[-stealth]
  (m-1-1) edge node [above]{$(F^1)_*$} (m-1-2)
  (m-1-2) edge node [above]{} (m-1-3)
	  (m-1-3) edge node [above]{} (m-1-4)  
  (m-1-4) edge node [above]{$(F^1)_*$} (m-1-5)
  (m-2-4) edge node [left]{$\Psi(Y_1)$} (m-1-4)
  (m-2-3) edge node [left]{$\mathrm{Cone}(\Psi)$} (m-1-3)
  (m-2-2) edge node [left]{$\Psi(Y_2)$} (m-1-2)  
  (m-2-1) edge node [above]{$(f^1)_*$} (m-2-2)
  (m-2-2) edge node [above]{} (m-2-3)
	  (m-2-3) edge node [above]{} (m-2-4)  
  (m-2-4) edge node [above]{$(f^1)_*$} (m-2-5)
 
  ;
\end{tikzpicture}
\end{center}
The same identical construction applies to the iterated mapping cones of the maps. In particular this shows that the connecting homomorphisms form commutative diagram
\begin{center}
\begin{tikzpicture}
\matrix (m) [matrix of math nodes,row sep=2.5em,column sep=2.5em,minimum width=2em]
  {
  H_*(\mathrm{Cone}(F^1))& \mathscr{G}(Y_3)\\
  H_*(\mathrm{Cone}(f^1\boxtimes 1_N)) & \mathscr{F}(Y_3)\\};
  \path[-stealth]
  (m-1-1) edge node [above]{$\Delta_*$} (m-1-2)
  (m-2-1) edge node [left]{$\mathrm{Cone}(\Psi)$} (m-1-1)
  (m-2-1) edge node [above]{$\delta_*$} (m-2-2)
  (m-2-2) edge node [right]{$\Psi(Y_3)$} (m-1-2)  
  ;
\end{tikzpicture}
\end{center}
so that $\Psi$ is a natural transformation.
\end{proof}

\vspace{1cm}
\section{The Eilenberg-Moore spectral sequence and Massey products}\label{massey}
In this section we discuss some basic properties of the Eilenberg-Moore spectral sequence described in Corollary \ref{EM}. This arises from Theorem \ref{conn} by taking the natural filtration in the $\Ainf$-tensor product, see Lemma \ref{algEM}.
Our goal is to describe the higher differentials and the module structure terms of Massey (bi)products. This fact should not be surprising and in fact it has already interesting application in classic topics in algebraic topology (see \cite{McC}). Before doing so, we quickly discuss the proof of Corollary \ref{image}.
\begin{proof}[Proof of Corollary \ref{image}.]
This readily follows by identifying the map
\begin{equation*}
\hat{m}(X^{\#}):\Cj(Y_1)\otimes \Cj(Y_2)\rightarrow\Cj(Y_1\# Y_2)
\end{equation*}
under the isomorphism of Theorem \ref{conn} as the inclusion in the $\Ainf$-tensor product $\hat{C}(Y_1)\boxtimes \left(\hat{C}(Y_2)\right)^{\mathrm{opp}}$, and looking at the induced map on the associated spectral sequence. From this the inequalities for the correction terms in the case of homologies spheres readily follows because the maps in the \textit{bar} flavor
\begin{equation*}
\HSb(W^{\#}):\HSb_{\bullet}(Y_1)\otimes \HSb_{\bullet}(Y_2)\rightarrow\HSb_{\bullet}(Y_1\# Y_2)
\end{equation*}
is identified (after a grading shift) with the multiplication in the ring $\Rin$.
\end{proof}

\vspace{0.8cm}

For simplicity, we will focus on the differentials $d_2$ and $d_3$ which are the ones that will arise in our explicit computations. The general case is analogous. From this perspective, it is good to consider the $E^2$-page of the Eilenberg-Moore spectral sequence as the homology of the bar resolution of the two modules, see Section \ref{algebra}.
\begin{prop}\label{masseyprod}
Suppose we are given an element in the $E^2$ page which has a representative of the form $\x\otimes r_1\otimes\cdots\otimes r_n\otimes \y$ so that the triple Massey products of any three consecutive terms is defined (i.e. all products of consecutive terms vanish). Then we have
\begin{align*}
d_2([\x\otimes r_1\otimes\cdots\otimes r_n\otimes \y])&=[ \langle\x,r_1,r_2\rangle\otimes r_3\otimes \cdots r_n\otimes \y=\\
&+\sum_{i=1}^{n-2}\x\otimes r_1\otimes \cdots\otimes \langle r_i,r_{i+1},r_{i+2}\rangle\otimes \cdots\otimes \y=\\
&+\x\otimes r_1\otimes\cdots\otimes \langle r_{n-1}, r_{n},\y\rangle].
\end{align*}
Similarly, if the fourfold Massey products of any four consecutive elements is defined, then the element survives to the $E^3$ page and we have
\begin{align*}
d_3([\x\otimes r_1\otimes\cdots\otimes r_n\otimes \y])&= \langle \x,r_1,r_2,r_3\rangle\otimes r_4\otimes \cdots r_n\otimes \y+\\
&+\sum_{i=1}^{n-3}\x\otimes r_1\otimes \cdots\otimes \langle r_i,r_{i+1},r_{i+2},r_{i+3}\rangle \otimes \cdots\otimes y\\
&+\x\otimes r_1\otimes\cdots\otimes \langle r_{n-2}, r_{n-1},r_{n},\y\rangle.
\end{align*}
\end{prop}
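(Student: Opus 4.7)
The plan is to unpack the spectral sequence via its bar-complex description and to produce explicit representatives in the total $\Ainf$-tensor product by a sequence of lifts, comparing the result to the inductive definitions of Massey products in Section \ref{algebra}. First I would recall that the filtration $F_p=\bigoplus_{i\leq p} M\otimes (A[1])^{\otimes i}\otimes N$ decomposes the differential $\partial$ on $\mathcal{M}\boxtimes\mathcal{N}$ as $\partial_0+\partial_1+\partial_2+\cdots$, where $\partial_k$ drops filtration by $k$ and applies the $\Ainf$-operations $m_{k+1}$, $\mu_{k+1}$ and $m_{k+1}^{\mathrm{opp}}$ to any $k+1$ consecutive letters of a bar word. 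Consequently the $E^1$-page is the bar complex with differential $\partial_1$ using only $m_2$, $\mu_2$ and $m_2^{\mathrm{opp}}$, and the general $d_r$ is computed by the usual ``zig-zag'' recipe on lifts in the filtered complex.

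Given a representative $c=[\x|r_1|\cdots|r_n|\y]$ of an $E^2$-class as in the statement, every consecutive product vanishes in homology, so one can pick chain-level primitives $s_0$ with $m_1(s_0)=m_2(\x,r_1)$, $s_i$ with $\mu_1(s_i)=\mu_2(r_i,r_{i+1})$ for $1\leq i\leq n-1$, and $s_n$ with $m_1(s_n)=m_2^{\mathrm{opp}}(r_n,\y)$. I would then form the correction
\[
c_1=[s_0|r_2|\cdots|r_n|\y]+\sum_{i=1}^{n-1}[\x|r_1|\cdots|s_i|\cdots|r_n|\y]+[\x|r_1|\cdots|r_{n-1}|s_n],
\]
so that by construction $\partial_0(c_1)=\partial_1(c)$, and hence $c+c_1$ projects to $[c]$ in $E^1$ while becoming a cycle modulo $F_{n-2}$. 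By the standard recipe, $d_2[c]$ is the class of $\partial_2(c)+\partial_1(c_1)$ in $F_{n-2}/F_{n-3}$. Grouping contributions at each interior consecutive triple $(r_{i-1},r_i,r_{i+1})$, the term $\mu_3(r_{i-1},r_i,r_{i+1})$ coming from $\partial_2 c$ and the terms $\mu_2(s_{i-1},r_{i+1})$ and $\mu_2(r_{i-1},s_i)$ coming from $\partial_1 c_1$ add up to exactly the chain representative of $\langle r_{i-1},r_i,r_{i+1}\rangle$; the endpoint triples $(\x,r_1,r_2)$ and $(r_{n-1},r_n,\y)$ are handled identically using the bimodule operations $m_3$ and $m_3^{\mathrm{opp}}$. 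Summing yields the stated formula for $d_2$.

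For $d_3$, the hypothesis that each consecutive fourfold Massey product is defined means that, beyond the vanishing of double products, specific triple Massey representatives are themselves null-homologous, so one can pick further primitives $t_{i,i+1,i+2}$ bounding each triple Massey chain at the interior quadruples (and the analogous ones at the ends). I would extend the lift to $c+c_1+c_2$, where $c_2$ contains both the letters $t_{i,i+1,i+2}$ replacing three consecutive $r$'s and all the two-letter insertions $[\cdots|s_i|\cdots|s_j|\cdots]$ with $\{i,i+1\}$ and $\{j,j+1\}$ disjoint, chosen so that $\partial_0(c_2)+\partial_1(c_1)+\partial_2(c)$ lies in $F_{n-3}$. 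Again $d_3[c]$ equals the class of $\partial_3(c)+\partial_2(c_1)+\partial_1(c_2)$ modulo $F_{n-4}$, and at each consecutive quadruple the assembled expression matches term-for-term the defining chain of $\langle r_{i-1},r_i,r_{i+1},r_{i+2}\rangle$ as recalled in Section \ref{algebra}.

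The main obstacle is the last combinatorial verification: one must check that the $\mu_2(s_i,s_j)$ cross terms together with the mixed $\mu_3$-terms of type $\mu_3(s_i,r_{i+2},r_{i+3})$, $\mu_3(r_{i-1},s_i,r_{i+2})$, $\mu_3(r_{i-1},r_i,s_{i+1})$ and the pure $\mu_4$-contribution conspire at each consecutive quadruple precisely into the defining sum for the fourfold Massey product, with no leftover terms outside $F_{n-4}$. This bookkeeping is the same pattern used classically for higher differentials of the Eilenberg--Moore spectral sequence (see \cite{McC}), and the exact match is forced by the coincidence between the $\Ainf$-relations of Section \ref{algebra} and the inductive formulas defining Massey products there; the extension to higher $d_r$ follows along the same lines by lifting through further primitives.
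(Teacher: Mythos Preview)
Your proposal is correct and follows essentially the same staircase/zig-zag argument as the paper: lift the representative by inserting primitives $s_i$ for the vanishing consecutive products, then read off $d_r$ from the lower-filtration piece of the total differential and match it with the chain-level definition of the Massey products. The paper only spells out the case $n=2$ for $d_2$ (writing the corrected chain $c=\x\otimes r_1\otimes r_2\otimes \y+s_1\otimes r_2\otimes \y+\x\otimes s_2\otimes \y+\x\otimes r_1\otimes s_3$, computing $\partial c$, and noting the leftover $\partial_0(s_1\otimes s_3)$ term dies in $E^1$), declaring the general case and the $d_3$ statement to be only notationally harder; your treatment of the general $n$ and of the $c_2$-lift for $d_3$ is more explicit than what the paper provides, but the underlying mechanism is identical.
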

Notice that the image of the differential is not necessarily a well defined homology class itself. Indeed, the statement of the result includes implicitly the fact that this element is well defined in the respectively $E^2$ or $E^3$ page of the spectral sequence.
\begin{proof}
We discuss the proof in the first statement in the case $n=2$, as the other ones are more complicated only from a notational point of view. This is analogous to the standard staircase argument for the spectral sequence of a double complex, see for example Lemma 8.30 in \cite{McC}. Consider a chain representative $\x\otimes r_1\otimes r_2\otimes \y$, so that in particular $\x$, $r_1$, $r_2$ and $\y$ are cycles. As consecutive products are zero in homology, there exist classes $s_1,s_2,s_3$ so that
\begin{equation*}
\partial_0 s_1=m_2(\x,r_1),\quad \partial_0 s_2=\mu_2(r_1,r_2),\quad \partial_0 s_3=m_2(r_2,\y).
\end{equation*}
Consider the chain
\begin{equation*}
c=\x\otimes r_1\otimes r_2\otimes \y+ s_1\otimes r_2\otimes \y+ \x\otimes s_2\otimes \y+\x\otimes r_1\otimes s_3,
\end{equation*}
This defines the same class in the $E^2$-page and has total boundary
\begin{align*}
\partial c&= \left(\mu_2(s_1,r_2)+m_2(\x,s_2)+m_3(\x,r_1,r_2)\right)\otimes \y\\
&+\x\otimes\left(m_2(s_2,\y)+\mu_2(r_1,s_3)+m_3(r_1,r_2,\y)\right)+\partial_0 (s_1\otimes s_3)
\end{align*}
The result then follows by tracking the definition of the differential in the $E^2$ page of the spectral sequence.
\end{proof} 

\vspace{0.5cm}

We now discuss the module structure. To be concise, we will treat our objects as if they were genuine $\Ainf$-bimodules, so forgetting about transversality issues. These can be dealt with as in the previous sections. We have the following result.
\begin{prop}
For an admissible choice of data, for each $i,j\geq 1$ there are compositions
\begin{equation*}
m_{i,j}:\Cj(S^3)^{\otimes i-1}\otimes \Cj(Y)\otimes \Cj(S^3)^{\otimes j-1}
\end{equation*}
making $\Cj(Y)$ into an $\Ainf$-bimodule. This structure is well defined up to $\Ainf$-equivalence. Furthermore, the isomorphism $\Psi$ of Theorem \ref{conn} is an isomorphism of $\Ainf$-bimodules.
\end{prop}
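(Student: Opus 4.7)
The plan is to repeat almost verbatim the construction of Section \ref{higher}, with the single new ingredient that the distinguished $Y$-end is placed in the \emph{interior} of the ordered set of incoming ends rather than at its left extreme. For each pair $(i,j)$ with $i,j\geq 1$, consider the cobordism obtained by removing $i+j-2$ standard balls from $I\times Y$, with ordered incoming ends $S^3_{-i+1},\dots,S^3_{-1},Y,S^3_1,\dots,S^3_{j-1}$, using the embeddings $\iota^U$ applied fiberwise to fixed models independent of $Y$. The separating hypersurfaces $\Sigma_{k,l}$ are indexed by intervals $\{k,\dots,l\}$ of consecutive ends, and are canonically identified with $Y$ if the interval contains the central $Y$-slot and with $S^3$ otherwise. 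Parametrize a family of metrics and perturbations by the associahedron $K_{i+j-1}$, with each codimension-one face corresponding to stretching along one such hypersurface; the nested intersection structure of these hypersurfaces matches the face poset of $K_{i+j-1}$ (indeed the same combinatorial argument of Lemma \ref{associaneigh} applies here). Inductively building the family on $\partial K_{i+j-1}$ from the previously constructed data on the subcobordisms and extending to the interior via Proposition \ref{fundtr} yields admissible data, and $m_{i,j}$ is defined by taking fibered products with the resulting parametrized compactified moduli spaces on a subcomplex where all intermediate chains satisfy the appropriate transversality, as in Section \ref{higher}.

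The $\Ainf$-bimodule relations of Section \ref{algebra} are then read off from the codimension-one boundary strata of these parametrized moduli spaces. A face corresponding to $\Sigma_{k,l}$ decomposes the cobordism into two pieces: the one containing the central slot is again a bimodule cobordism (contributing a composition $m_{i',j'}$, where the right-module case $m_{1,j}=m_j$ and the new left-module case $m_{i,1}$ appear on either extreme), while the other is one of the algebra cobordisms $Z_{l-k+1}$ already equipped with $\mathscr{D}$, contributing a $\mu_{l-k+1}$. Summing over faces gives precisely the three sums in the bimodule axioms. Well-definedness up to $\Ainf$-bimodule equivalence is proved by the exact analogue of Theorem \ref{invariance}: given two admissible choices of data, one constructs, by the same inductive scheme parametrized now by bimodule multiplihedra (built out of cubes just as in Lemma \ref{associaneigh}), a family of chain maps $\{f_{i,j}\}$ realizing an $\Ainf$-bimodule homomorphism, and homotopies between any two such families.

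For the last assertion, we promote $\Psi$ to an $\Ainf$-bimodule homomorphism by constructing higher components
\begin{equation*}
\Psi_{i,j}:\Cj(S^3)^{\otimes i-1}\otimes \Cj(Y_0)\otimes \Cj(S^3)^{\otimes n}\otimes \Cj(Y_1)\otimes \Cj(S^3)^{\otimes j-1}\rightarrow \Cj(Y_0\# Y_1),
\end{equation*}
where now the $n$ ``middle'' $S^3$-inputs are removed from the one-handle of $X^{\hash}$ as in Section \ref{formula}, while the extra $i-1$ and $j-1$ balls are removed respectively from the $Y_0$- and $Y_1$-sides of $X^{\hash}$ by analogy with the bimodule cobordism of the first paragraph. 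Parametrize by an associahedron of the appropriate dimension, inductively matching previously constructed data on its boundary. The verification that the collection $\{\Psi_{i,j,n}\}$ satisfies the defining relations of an $\Ainf$-bimodule morphism reduces again to a boundary analysis: the faces of the parametrizing polytope correspond either to stretching along hypersurfaces cutting off $W$-type or $Z$-type cobordisms (giving terms with $m_{*,*}$ or $\mu_*$ acting on inputs or outputs) or to stretching along $Y_0\#Y_1$ (giving the outgoing module action on $\Cj(Y_0\#Y_1)$).

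The main technical obstacle is bookkeeping: ensuring that the families of metrics and perturbations built inductively on $K_{i+j-1}$ (and its multiplihedron variants) are globally coherent, so that every codimension-one stratum is matched with a unique term in the relevant $\Ainf$-bimodule identity. Once the parametrizing polytopes have been given the cubical covering of Lemma \ref{associaneigh} and the inductive extension scheme of Proposition \ref{fundtr} is applied uniformly, the identification of strata is mechanical and all terms cancel in matching pairs as in the proofs of Theorem \ref{ainf} and Theorem \ref{conn}.
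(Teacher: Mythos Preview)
Your proposal is correct and follows essentially the same approach as the paper: the paper's proof simply introduces the cobordism $W_{i,j}$ obtained by removing balls on both sides of the $Y$-slot, and defines $m_{i,j}$ by fibered products with moduli spaces parametrized by associahedra whose faces correspond to the separating hypersurfaces, pointing to a figure. Your write-up is a faithful and considerably more detailed elaboration of this same construction, and in fact you go further than the paper's proof by spelling out the inductive scheme, the boundary-strata identification giving the bimodule relations, the invariance via multiplihedra, and the promotion of $\Psi$ to an $\Ainf$-bimodule morphism via the extra balls on the $Y_0$- and $Y_1$-sides of $X^{\hash}$ --- the last point the paper's proof does not address explicitly.
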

\begin{proof}
Consider the cobordism $W_{i,j}$ obtained by removing $i+j-1$ open balls, $i-1$ of which we consider on the left and $j-1$ on the right. The compositions $m_{ij}$ are defined by taking fibered products with the moduli spaces parametrized by associahedra whose faces correspond to separating hypersurfaces, see Figure \ref{bimodule}.
\end{proof}
\begin{figure}
  \centering
\def\svgwidth{0.9\textwidth}
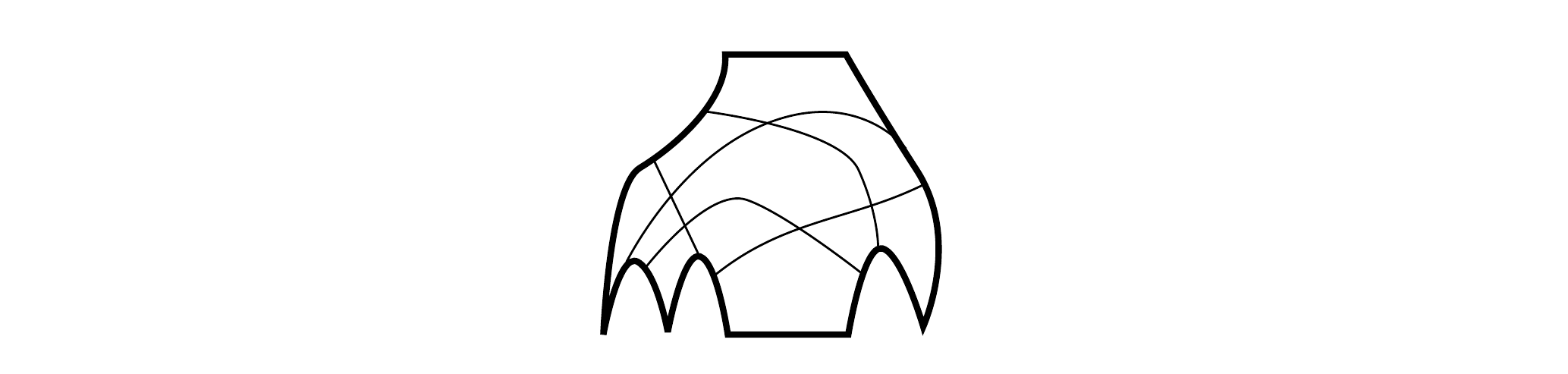
    \caption{The separating hypersurfaces in the cobordism $W_{2,1}$. These correspond to the faces of the associahedron parametrizing the data used to define the composition $m_{2,1}$.}
    \label{bimodule}
\end{figure} 

As a particular case we can consider the structure of $\Rin$-module induced in homology. A class $[r]\in\Rin$ induces a filtration preserving chain map
\begin{equation*}
\Psi(r): \Cj(Y_1)\boxtimes \Cj(Y_2)\rightarrow \Cj(Y_1)\boxtimes \Cj(Y_2)
\end{equation*}
given by
\begin{equation*}
\x\otimes r_1\otimes \cdots \otimes r_n\otimes \y\mapsto\sum_{i=1}^n m_{1,n+1} (r,\x,r_1,\cdots , r_i)\otimes r_{i+1}\otimes \cdots\otimes \y,
\end{equation*}
so we can try to understand the module structure by means of the Eilenberg-Moore spectral sequence. An immediate result is that the filtration on $\HSf_{\bullet}(Y_1\# Y_2)$ is a filtration of $\Rin$-submodules. On the other hand, by taking the associated graded module much information about the $\Rin$-module structure is lost. A way to recover it is by means of the following result, which is proved in the same way as Proposition \ref{masseyprod}. We state the simplest cases that will be useful in our computations, but the statement is not hard to generalize.
\begin{lemma}\label{moduleEM}
Suppose that an element in the $E^{\infty}$ page is represented by $\x\otimes r_1\otimes\y$. If all the products of consecutive elements in the list $r,\x,r_1,\y$ are zero, then the action of $r$ on it is given by $\langle r, \x, r_1\rangle\otimes \y$. 
\end{lemma}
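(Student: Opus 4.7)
The plan is to construct an explicit chain-level primitive witnessing the claimed identity in $H_*(\Cj(Y_0)\boxtimes \Cj(Y_1)^{\mathrm{opp}})$, mirroring the staircase argument in the proof of Proposition \ref{masseyprod}. The first task is to promote the given $E^\infty$-representative $\x\otimes r_1\otimes \y$ to a genuine cycle in the total complex. Using the vanishing hypotheses, I fix chain-level primitives $t_1,t_2\in\Cj(Y_0)$ and $t_3\in\Cj(Y_1)$ satisfying $m_1(t_1)=m_2(r,\x)$, $m_1(t_2)=m_2(\x,r_1)$ and $m_1(t_3)=m_2^{\mathrm{opp}}(r_1,\y)$, where $m_2(r,\x)$ denotes the left bimodule action and $m_2(\x,r_1)$ the right module action. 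A direct computation with the formula for the differential on the $\boxtimes$-complex, using the $\Ainf$-relations and the fact that $\x,r_1,\y$ are cycles, shows that
\begin{equation*}
C:=\x\otimes r_1\otimes \y+t_2\otimes \y+\x\otimes t_3
\end{equation*}
is a cycle in the total complex representing the given $E^\infty$-class.

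Next I would compute $\Psi(r)(C)$ using the formula for the left $\Ainf$-module structure on $\Cj(Y_0)\boxtimes \Cj(Y_1)^{\mathrm{opp}}$ displayed just before the lemma. Writing $m_3(r,\x,r_1)$ for the first higher bimodule composition, this yields
\begin{equation*}
\Psi(r)(C)=m_1(t_1)\otimes r_1\otimes \y+m_3(r,\x,r_1)\otimes \y+m_2(r,t_2)\otimes \y+m_1(t_1)\otimes t_3,
\end{equation*}
after substituting $m_2(r,\x)=m_1(t_1)$.

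The final step is to exhibit the correcting primitive $\tilde{C}:=t_1\otimes r_1\otimes \y+t_1\otimes t_3$. Its total boundary, computed with $\mu_1(r_1)=m_1(\y)=0$ and $m_2^{\mathrm{opp}}(r_1,\y)=m_1(t_3)$, and after canceling the two resulting $t_1\otimes m_1(t_3)$ terms over $\ztwo$, equals
\begin{equation*}
\partial\tilde{C}=m_1(t_1)\otimes r_1\otimes \y+m_2(t_1,r_1)\otimes \y+m_1(t_1)\otimes t_3.
\end{equation*}
Hence $\Psi(r)(C)+\partial\tilde{C}$ is
\begin{equation*}
\bigl[m_3(r,\x,r_1)+m_2(t_1,r_1)+m_2(r,t_2)\bigr]\otimes \y,
\end{equation*}
which is by construction a representative for $\langle r,\x,r_1\rangle\otimes \y$. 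The two sides are therefore homologous, and the lemma follows.

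The main obstacle is verifying that no higher-filtration corrections to the cycle $C$ are required and that the higher bimodule operations do not contribute any further terms to $\Psi(r)(C)$ than the four listed. The vanishing hypotheses are exactly what is needed for the former: the only obstructions to $\x\otimes r_1\otimes \y$ being a total cycle live in filtration zero and are killed by $t_2$ and $t_3$. The bound on bimodule operations comes from the scarcity of algebra slots: with a single $r_1$ in the middle there is no room for bimodule operations of order higher than $m_3$. Once these points are dispatched, the rest is bookkeeping with the explicit formula for the $\boxtimes$-differential, streamlined by the absence of signs over $\ztwo$.
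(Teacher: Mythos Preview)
Your proof is correct and is precisely the staircase argument the paper invokes when it says the lemma ``is proved in the same way as Proposition~\ref{masseyprod}.'' The paper gives no further details, so your explicit construction of the cycle $C$, the computation of $\Psi(r)(C)$, and the correcting primitive $\tilde{C}$ are exactly what is being left to the reader.
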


\vspace{1cm}
\section{An explicit computation}\label{compute}
We consider the Brieskorn homology sphere $Y=\Sigma(2,3,11)$. From \cite{Lin2} that we have that the isomorphism of graded $\Rin$-modules
\begin{equation*}
\HSt_{\bullet}(Y)=\V^+_2\oplus \V^+_1\oplus\V^+_4=\mathcal{S}^+_{2,0,0}
\end{equation*}
where the action of $Q$ is a surjection from the first tower onto the second and from the second tower onto the third. In particular the Manolescu correction terms are $2,0,0$.
\begin{prop}\label{2Y}
We have the isomorphism of graded $\Rin$-modules
\begin{equation*}
\HSt_{\bullet}(2Y)=(\V^+_2\oplus \V^+_5\oplus\V^+_4)\oplus \ztwo^2\langle3\rangle
\end{equation*}
where the action of $Q$ is a surjection from the first tower to the second and from the second tower to the third. Furthermore the $Q$ action from the last summand to the first tower is not trivial. In particular the Manolescu's correction terms are $2,2,0$.
\end{prop}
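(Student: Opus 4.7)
The plan is to apply the Eilenberg-Moore spectral sequence of Corollary \ref{EM} with both summands equal to $Y=\Sigma(2,3,11)$. From the given description of $\HSt_\bullet(Y)$ and the long exact sequence (\ref{longex}), the \emph{from} flavor $\HSf_\bullet(Y)$ has precisely the three-tower shape of the module $M$ appearing in Example \ref{S2311}, with the $Q$-action going from the first tower to the second and from the second to the third. Thus the $E^2$-page of the spectral sequence is $\Tor^{\Rin}_{*,*}(\HSf_\bullet(Y),\HSf_\bullet(Y))$, which I would compute directly using the two-periodic projective resolution of $M$ written down in Example \ref{S2311}: tensoring $M$ with the periodic complex yields a bigraded group that is essentially two copies of $\Tor^{\Rin}(M,M)$ shifted, with a manageable periodic pattern.

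Next I would analyze the higher differentials via Proposition \ref{masseyprod}. Since the algebra $\Rin$ has nontrivial Massey products (as is clear from the bar resolution description in Example \ref{FF}, and as stated in Corollary \ref{nonformal}), and the $\Ainf$-module structure on $\HSf_\bullet(Y)$ inherits compatible higher operations, many pairs of classes on the $E^2$-page will cancel under $d_2$, and further surviving classes will cancel under $d_3$. Concretely, I would identify each bigraded generator of $E^2$ as a bar expression $\mathbf{x}\otimes r_1\otimes\cdots\otimes r_n\otimes\mathbf{y}$ and apply Proposition \ref{masseyprod} to rewrite $d_2$ and $d_3$ in terms of the triple and fourfold Massey products of $Q$ and $V$ in $\Rin$ acting on the generators of $\HSf_\bullet(Y)$. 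The surviving $E^\infty$ page should match the claimed $\Rin$-module up to extensions.

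To promote this graded computation to an actual $\Rin$-module structure on $\HSf_\bullet(2Y)$ I would invoke Lemma \ref{moduleEM}: the nontrivial $Q$-action from the finite summand $\ztwo^2\langle3\rangle$ to the top tower is detected as a Massey product $\langle Q, \mathbf{x}, r_1\rangle$ on a suitable $E^\infty$ representative. Corollary \ref{image} additionally pins down the image of the connect-sum cobordism map, which forces the tower part to be $\V^+_2\oplus\V^+_5\oplus\V^+_4$ with the correct $Q$-action and correct Manolescu correction terms $(\alpha,\beta,\gamma)=(2,2,0)$. Finally, I would recover $\HSt_\bullet(2Y)$ from $\HSf_\bullet(2Y)$ via the long exact sequence (\ref{longex}) together with the Gysin sequence of \cite{Lin2}, which determines the bar flavor of a connect sum and hence the connecting maps.

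The main obstacle will be step two: explicitly identifying the Massey products with values in $\HSf_\bullet(Y)$ (rather than only in $\Rin$) that control $d_2$ and $d_3$, and checking that exactly the right classes survive to produce the finite $\ztwo^2\langle3\rangle$ summand — in particular verifying that no further differential $d_r$ for $r\geq 4$ kills it. This will require using the periodicity of the resolution and a careful degree-by-degree comparison with the total rank constraints imposed by Corollary \ref{image}.
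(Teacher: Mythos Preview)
Your plan has a circularity problem. You propose to determine the differentials $d_2$ and $d_3$ by feeding in the Massey products of $\Rin$, citing Corollary \ref{nonformal} as input. But in the paper's logical order Corollary \ref{nonformal} is a \emph{consequence} of Proposition \ref{2Y}: one first computes $\HSf_{\bullet}(2Y)$ by other means, observes that the spectral sequence must therefore have a nonzero $d_3$, and only then deduces that $\langle Q^2,Q,Q^2,Q\rangle$ is nontrivial. The bar description in Example \ref{FF} gives the $\Tor$ groups, not the Massey products; it does not by itself tell you which of $d_2,d_3,d_4$ carries the cancellation. So as written you have no independent access to the higher products you need, and your step two cannot be carried out. (A smaller slip: $\HSf_{\bullet}(Y)$ is $N\langle 1\rangle$, not $M$, in the notation of Example \ref{S2311}.)

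The paper's argument sidesteps all of this. It first uses the ordinary connected sum formula (Theorem \ref{connectusual}) to write down $\HMf_{\bullet}(2Y)$, and then the Gysin exact sequence of \cite{Lin2} reduces the possibilities for $\HSf_{\bullet}(2Y)$ to exactly two candidate $\Rin$-modules. These two candidates differ in the rank of $\HSf_{3}(2Y)$, so it suffices to compute that single rank. On the $E^2$-page the only contributions on the antidiagonal of total degree $3$ sit in bidegrees $(0,3)$ and $(2,1)$, and a glance at the table shows that no differential $d_r$ can possibly hit or leave those slots; hence they survive to $E^{\infty}$ untouched and $\HSf_3(2Y)$ has rank two. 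No Massey products, and no analysis of $d_2$ or $d_3$, are needed for the proposition itself. The differentials are analyzed afterwards, once the answer is already known, precisely to extract Corollary \ref{nonformal}.
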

\begin{proof}
We have that $\HSf_{\bullet}(Y)=N\langle1\rangle$ where $N$ is the $\Rin$-module we discussed in Example \ref{S2311}. Similarly, we know that
\begin{equation*}
\HMf_{\bullet}(Y)=\ztwo[[U]]\langle1\rangle\oplus \ztwo\langle1\rangle.
\end{equation*}
The usual connect sum formula in Theorem \ref{connectusual} implies then that
\begin{equation*}
\HMf_{\bullet}(2Y)=\ztwo[[U]]\langle3\rangle\oplus \ztwo^3\langle3\rangle\oplus\ztwo\langle2\rangle.
\end{equation*}
It follows by looking at the Gysin exact sequence (see the last Section of \cite{Lin2} for the details) that there are only two possibilities for $\HSf_{\bullet}(2Y)$, namely
\begin{equation*}
(\ztwo[[V]]\langle-1\rangle\oplus \ztwo[[V]]\langle2\rangle\oplus \ztwo[[V]]\langle1\rangle)\oplus \ztwo^2\langle3\rangle,
\end{equation*}
where the action of $Q$ sends the first tower to the second and from the second to the third, or
\begin{equation*}
\Rin\langle 3\rangle\oplus \ztwo^2\langle 3\rangle\oplus \ztwo\langle 2\rangle.
\end{equation*}
We claim that $\HSf_3(2Y)$ has rank two so that the first case holds. The main result follows then by Poincar\'e duality (the additional $Q$ action follows by applying the Gysin exact sequence). To prove the claim, we look at the $E^2$-page of the Eilenberg-Moore spectral sequence. We already described a simple two periodic projective resolution of $N\langle1\rangle$. By tensoring it over $\Rin$ with $N\langle1\rangle$ itself and taking homology we can identify $\Tor_{\Rin}(N\langle1\rangle, N\langle1\rangle)$ to be as follows in bidegrees near $(0,3)$.
\begin{center}
\begin{tikzpicture}
\matrix (m) [matrix of math nodes,row sep=0.1em,column sep=0.5em,minimum width=0.1em]
  { \ztwo& \cdot & \cdot &\cdot& \cdot& \cdot\\
  \cdot & \ztwo &\cdot & \cdot & \cdot & \cdot\\
  {\ztwo^2} & \cdot & \cdot & \cdot & \cdot & \cdot\\
  \cdot &\cdot& \ztwo &\cdot &\cdot&\cdot\\
  \ztwo & \ztwo & \cdot & \ztwo & \cdot & \cdot\\
  \ztwo &\cdot& \ztwo &\cdot &\cdot&\cdot\\
  \ztwo &\cdot&\cdot&\cdot&\ztwo&\cdot\\
  \cdot& \cdot&\cdot&\ztwo &\cdot& \ztwo \\
    \ztwo &\cdot&\cdot&\cdot&\ztwo&\cdot\\
    \ztwo & \cdot & \cdot & \cdot & \cdot & \cdot\\
    \ztwo & \cdot & \cdot & \cdot & \cdot & \ztwo\\
};
\end{tikzpicture}
\end{center}
Here the top left element has bidegree $(0,3)$, and up to grading shift all the remaining group, except the first column, is isomorphic to the corresponding page of $\Tor_{\Rin}(\ztwo,\ztwo)$ in Example \ref{FF}. On the other hand, we know from general facts that the first column can be identified as $N\otimes_{\Rin}N\langle3\rangle$, so it can be identified as a $\Rin$-module with
\begin{equation*}
\left(\ztwo[[V]]\langle-1\rangle\oplus \ztwo[[V]]\langle-2\rangle\oplus \ztwo[[V]]\langle1\rangle\right)\oplus \ztwo\langle3\rangle\oplus \ztwo\langle 1\rangle,
\end{equation*}
where the $Q$ action is an isomorphism from the first tower to the second and is injective from the second to the third. Notice that the disposition of the non trivial groups implies that the $\ztwo$ summands in bidegree $(0,3)$ and $(2,1)$ are not involved in differentials, so that they survive till the $E^{\infty}$ page. They are the only non trivial groups in their antidiagonal, so that the rank of $\HSf_3(2Y)$ is $2$ and the result follows.
\end{proof}
\vspace{0.5cm}
Unlike the case of $2Y$, in the case of $3Y$ the Manolescu's invariants can be computed by means of the Gysin exact sequence (even though the group itself cannot be), and in particular they are $4,2,2$. This follows from the fact that
\begin{equation*}
\HMf_{\bullet}(3Y)=\ztwo[[U]]\langle5\rangle\oplus \ztwo^7\langle5\rangle\oplus\ztwo^5\langle4\rangle\oplus\ztwo\langle3\rangle,
\end{equation*}
and the fact that in this case the parity is right to imply that there is an element in the top grading in the tower. These computations are sufficient to prove Theorem \ref{nonadditive}. 

\begin{proof}[Proof of Theorem \ref{nonadditive}]
Suppose $\varepsilon=a\alpha+b\beta+c\gamma$ is a linear combination that defines a homomorphism. By $\varepsilon(-Y)=-\varepsilon(Y)$ and evaluating is some basic examples (see \cite{Lin2}) we obtain that $a=c$. Applying then $\varepsilon(2Y)=2\varepsilon(Y)$ we have then that also $a=b$. The fact that $a(\alpha+\beta+\gamma)$ is not an homomorphism then follows from the fact that $\varepsilon(3Y)\neq 3\varepsilon(Y)$.
\end{proof}

\vspace{0.5cm}

As we knot the group the spectral sequence converges to, we can tell that there has to be some differentials. Recall that $d_n$ has bidegree $(-n-1,n)$. In particular:
\begin{itemize}
\item of the two $\ztwo$ summands at $(2,0)$ and $(3,-1)$ exactly one cancels with a generator of the $\ztwo^2$ at $(0,1)$. This follows from the fact that both $\HSf_2(2Y)$ and $\HSf_1(2Y)$ have rank one.
\item the other groups in the first column are not involved by the differentials because their image in the $E^{\infty}$ corresponds to the image of the map
\begin{equation*}
\hat{m}(X^{\hash}): \HSf_{\bullet}(Y)\otimes \HSf_{\bullet}(Y)\rightarrow \HSf_{\bullet}(2Y),
\end{equation*}
as in Corollary \ref{image}. In particular we have that in the (infinitely many) configurations of four groups
\begin{center}\label{four}
\begin{tikzpicture}
\matrix (m) [matrix of math nodes,row sep=0.1em,column sep=0.5em,minimum width=0.1em]
  {   \ztwo & \cdot & \cdot & \cdot & \cdot\\
   \cdot& \ztwo &\cdot &\cdot&\cdot\\
   \cdot&\cdot&\cdot&\ztwo&\cdot\\
   \cdot&\cdot&\cdot &\cdot& \ztwo \\
};
\end{tikzpicture}
\end{center}
the two upmost cancel with the two bottommost.
\end{itemize}
To determine the actual differentials, we need the following result.
\begin{lemma}\label{m3vanish}
Let $Y$ be a homology sphere, and fix an identification of $\Rin$-modules of $\HSb_{\bullet}(Y)$ with $\ztwo[V^{-1},V]][Q]/(Q^3)$. Then the for each $i\in\mathbb{Z}$ and $j,k\in \mathbb{N}$ the triple Massey products
\begin{equation*}
\langle Q^2V^i, Q^2 V^j, Q V^k\rangle,\quad\langle Q^2V^i, Q V^j, Q^2 V^k\rangle,\quad \langle QV^i, Q^2 V^j, Q^2 V^k\rangle
\end{equation*}
are well defined in $\HSb_{\bullet}(Y)$ and zero.
\end{lemma}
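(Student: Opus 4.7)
The plan is to bypass any chain-level computation of $\mu_3$ or choice of bounding cochains and instead show that in each of the three cases the indeterminacy subgroup already exhausts the one-dimensional ambient group, forcing the Massey coset to be zero. The underlying observation is that in the graded ring $A := \ztwo[V^{-1},V][Q]/(Q^3)$ with $\deg V = -4$ and $\deg Q = -1$, each homogeneous component has rank at most one over $\ztwo$: the degrees $-4n$, $-4n-1$, $-4n-2$ are spanned by $V^n$, $QV^n$, $Q^2V^n$ respectively, while degrees $\equiv 1 \pmod 4$ vanish.

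First I would verify definedness: every defining pairwise product vanishes because $Q^3=0$. For instance in $\langle Q^2V^i, Q^2V^j, QV^k\rangle$ one has $Q^2V^i \cdot Q^2V^j = Q^4V^{i+j} = 0$ and $Q^2V^j \cdot QV^k = Q^3V^{j+k} = 0$, and the other two triples are handled identically. A direct degree count then gives
\[
\deg(a_1)+\deg(a_2)+\deg(a_3)-1 = -4(i+j+k+1)-2
\]
in all three cases, so the ambient group is the one-dimensional $\ztwo$ generated by $Q^2V^{i+j+k+1}$.

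It therefore suffices to exhibit a single nonzero element of the indeterminacy $[a_1] \cdot \HSb_{\deg(a_2)+\deg(a_3)-1}(Y) + \HSb_{\deg(a_1)+\deg(a_2)-1}(Y) \cdot [a_3]$ equal to $Q^2V^{i+j+k+1}$. In each case the intermediate degrees $\deg(a_i)+\deg(a_{i+1})-1$ lie in $\{-4(\cdot)-4,\,-4(\cdot)-5\}$, hence are $\equiv 0$ or $-1 \pmod 4$, so they are nontrivial pieces of $A$ and admit the required multiplication. Explicitly: in $\langle Q^2V^i,Q^2V^j,QV^k\rangle$ the left piece contains $Q^2V^i\cdot V^{j+k+1}=Q^2V^{i+j+k+1}$; in $\langle Q^2V^i,QV^j,Q^2V^k\rangle$ the right piece contains $V^{i+j+1}\cdot Q^2V^k=Q^2V^{i+j+k+1}$; and in $\langle QV^i,Q^2V^j,Q^2V^k\rangle$ the left piece contains $QV^i\cdot QV^{j+k+1}=Q^2V^{i+j+k+1}$. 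Hence the indeterminacy fills the ambient in each case and the Massey coset is zero. The only obstacle is the degree bookkeeping verifying that the intermediate degrees never land in the vanishing range $\equiv 1 \pmod 4$; this is immediate from the computation above.
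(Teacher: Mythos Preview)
Your argument hinges on a sign in the grading shift that is, unfortunately, exactly backwards in the Floer setting. In the actual chain complexes $\Cj$ the differential $\mu_1$ \emph{lowers} degree by one, so the bounding chains $s_i$ with $\mu_1(s_i)=\mu_2(a_i,a_{i+1})$ have degree $d_i+d_{i+1}+1$, and the Massey product lands in degree $d_1+d_2+d_3+1$, not $d_1+d_2+d_3-1$. (The formula you quoted from the algebra section is written in the cohomological convention; the line just before the paper's proof of the lemma, ``the triple Massey product is either zero or the element $V^{i+j+k+1}$'', confirms that in the Floer application the target degree is $-4(i+j+k+1)$.) With this correction your bookkeeping reverses completely: for $\langle Q^2V^i,Q^2V^j,QV^k\rangle$ the left indeterminacy is $Q^2V^i\cdot H_{-4(j+k)-2}=Q^2V^i\cdot\ztwo\{Q^2V^{j+k}\}=0$ and the right one sits in degree $-4(i+j)-3\equiv 1\pmod 4$, hence is also zero. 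The same happens in the other two cases. So the indeterminacy is trivial in every case, the Massey product is an honest element of the one--dimensional group $\ztwo\{V^{i+j+k+1}\}$, and nothing forces it to vanish formally.

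This is not a technicality you can patch: the whole point of the lemma is precisely that these products are \emph{genuine} (zero indeterminacy) and nevertheless vanish, so that via Proposition~\ref{masseyprod} one can conclude $d_2=0$ in the spectral sequence. The paper's proof does the real work: it factors $Q^2V^i=\mu_2(QV^i,Q)$, uses the $\Ainf$-relation for $\mu_4$ to rewrite $m_3(\mu_2(r_1,r_2),r_3,r_4)$ modulo boundaries, and reduces everything to Massey products such as $\langle Q,Q^2,Q\rangle$ that lie in a degree $\equiv 1\pmod 4$ and hence vanish for honest dimensional reasons. You will need an argument of that flavor.
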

Notice that for degree reasons the triple Massey product is either zero or the element $V^{i+j+k+1}$.
\begin{proof}
The heuristic behind the result is that for the first product we can factor the first element into $Q V^i\cdot Q$ and use the $\Ainf$-relations to write it as a sum of terms which vanish. For simplicity we will assume $i,j,k$ to be zero, as the proof will be the same in general. We only have to deal with reducible solutions. Fix cycles $r_1,r_2,r_3,r_4$ representing the classes $Q$, $Q$, $Q^2$ and $Q$. Notice that the cycles $\mu_2(r_2,r_3)$ and $\mu_2(r_3,r_4)$ are automatically zero at the chain level because in our definition of the Morse-Bott chain complex we are quotienting out by small chains, see the quick review in \cite{Lin2}. Hence, fixing a chain $s_1$ so that
\begin{equation*}
\partial s_1= (m_2(r_1,r_2),r_3),
\end{equation*}
the triple Massey product is the class of
\begin{equation*}
m_3(m_2(r_1,r_2),r_3,r_4)+ m_2(s_1,r_4).
\end{equation*}
Using the $\Ainf$ relations \ref{infalg} and the fact that the products involved vanish at the chain level we see that this cycle is cobordant to
\begin{equation*}
m_2\left((m_3(r_1,r_2,r_3)+s_1), r_4\right)+ m_2\left(r_1, \mu_3(r_2,r_3,r_4)\right).
\end{equation*}
The expression $m_3(r_1,r_2,r_3)+s_1$ is a cycle and defines the zero class in homology for dimensional reasons. Similarly $\mu_3(r_2,r_3,r_4)$ defines the triple Massey product $\langle Q,Q^2,Q\rangle$ so it vanishes because the group is trivial in that dimension.
\par
The rest of the result follows in a similar way. One shows for example that
\begin{equation*}
\langle Q^2V^i, Q V^j, Q^2 V^k\rangle=\langle Q^2V^i, Q^2 V^j, Q V^k\rangle,
\end{equation*}
hence is zero.
\end{proof}

From this we can easily determine the differentials.
\begin{prop}
In the spectral sequence of Proposition \ref{2Y}, the differential $d_2$ on the $E^2$ page vanishes, and the spectral sequence collapses at the $E^4$ page.
\end{prop}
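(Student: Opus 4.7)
My plan is to combine Proposition \ref{masseyprod}, which identifies $d_2$ and $d_3$ with triple and fourfold Massey products, with the vanishing Lemma \ref{m3vanish}, in order to show first that $d_2=0$; then to compare ranks against the known $\HSf_\bullet(2Y)$ from Proposition \ref{2Y} in order to rule out $d_r$ for $r\geq 4$.

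First I would verify $d_2=0$. A class on the $E^2$-page is represented in the bar model by a tensor $\x\otimes r_1\otimes\cdots\otimes r_n\otimes\y$, with $\x,\y\in N\langle 1\rangle=\HSf_\bullet(Y)$ and $r_i\in\Rin=\HSf_\bullet(S^3)$, and with all consecutive products under $m_2$ and $\mu_2$ vanishing, so that every triple Massey product appearing in Proposition \ref{masseyprod} is defined. In $\Rin$ the product of two homogeneous generators vanishes only when both have a $Q$-factor whose exponents sum to at least $3$; every middle Massey product $\langle r_i,r_{i+1},r_{i+2}\rangle$ is therefore (possibly after the factorization tricks used in the proof of Lemma \ref{m3vanish}) of exactly the shape treated by that lemma, and so vanishes. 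For the end Massey products $\langle\x,r_1,r_2\rangle$ and $\langle r_{n-1},r_n,\y\rangle$, which involve module elements, I would rerun the same factorization argument in the $\Ainf$-bimodule structure of $\Cj(Y)$ over $\Cj(S^3)$: rewrite $r_1$ or $r_{n-1}$ as a product of two generators whose pairwise composition already vanishes, apply the $\Ainf$-bimodule relations, and conclude that the resulting cycle defines the zero class in $\HSf_\bullet(Y)$ for dimensional reasons.

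Once $d_2=0$ we have $E^3=E^2$, so the only remaining differentials have bidegree $(-r-1,r)$ for $r\geq 3$. The complete rank of $\HSf_\bullet(2Y)$ in each degree is determined by Proposition \ref{2Y} and the Gysin exact sequence from \cite{Lin2}. Comparing these ranks with the explicit $E^2$-page computed in the proof of Proposition \ref{2Y}, the cancellations needed are exactly those listed in the discussion preceding the statement: the class at $(3,-1)$ must cancel one generator of the $\ztwo^2$ at $(0,1)$, and in each of the infinitely many four-generator configurations the two upper classes must cancel with the two lower ones. Every such pair has bidegree difference $(-4,3)$, matching the bidegree of $d_3$; no pair has bidegree difference $(-r-1,r)$ for any $r\geq 4$. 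Hence $d_r=0$ for $r\geq 4$ and the spectral sequence collapses at $E^4$.

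The main obstacle will be the first step: transcribing the factorization argument of Lemma \ref{m3vanish} to the $\Ainf$-bimodule setting of $\Cj(Y)$ over $\Cj(S^3)$, where the left or right entry of the Massey triple is now a module cycle rather than an algebra cycle. The dimension-counting half, by contrast, is essentially forced by the bidegree pattern of the $E^2$-page once $d_2$ is known to vanish.
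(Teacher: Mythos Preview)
Your overall outline is right, but the first step has a genuine gap that the paper handles differently. You propose to show that all triple Massey products vanish by adapting the factorization proof of Lemma~\ref{m3vanish} to the $\Ainf$-bimodule setting, and you correctly flag this as the main obstacle. The paper avoids this adaptation entirely. Rather than working with arbitrary bar-cycles, it names explicit generators for the relevant $E^2$-classes by labeling each module element of $\HSf_\bullet(Y)$ via its image under $p_*:\HSf_\bullet(Y)\to\HSb_\bullet(Y)$ (so the top-degree class is called $Q^2$, and for instance the generator at bidegree $(2,0)$ is $Q^2\otimes Q^2\otimes Q\otimes Q^2$). Because $p_*$ is the first component of an $\Ainf$-morphism of modules (Remark~\ref{ijp}), triple Massey products in $\HSf_\bullet(Y)$ are sent to triple Massey products in $\HSb_\bullet(Y)$, and there Lemma~\ref{m3vanish} applies verbatim --- no bimodule reworking is needed. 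Note also that Lemma~\ref{m3vanish} is stated for $\HSb_\bullet(Y)$, not for $\Rin=\HSf_\bullet(S^3)$; even your ``middle'' Massey products $\langle r_i,r_{i+1},r_{i+2}\rangle$ require this transport via $p_*$ rather than a direct appeal to the lemma.

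A secondary difference: the paper does not attempt to prove $d_2=0$ on every $E^2$-class. It checks $d_2$ only on the explicit generators $Q^2\otimes(Q^2\otimes Q)^{\otimes n}\otimes Q^2$, these being the only sources from which a nonzero $d_2$ could land somewhere nontrivial once one knows (Corollary~\ref{image}) that the first column survives. Your blanket claim that every cycle on the $E^2$-page admits a representative on which all consecutive products and all triple Massey products vanish is stronger than needed and harder to justify. The second half of your argument --- deducing collapse at $E^4$ from the known $E^\infty$ ranks and the bidegree pattern --- matches the paper's reasoning.
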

\begin{proof}
We can fix the identification in Lemma \ref{m3vanish} so that the element in $\HSf_{\bullet}(Y)$ of top degree is mapped to $Q^2$, and denote each element by its image under $p_*$. It is the straighforward to check that the element $Q^2\otimes Q^2\otimes Q\otimes Q^2$ is a generator of the $\ztwo$ summand of bidegree $(2,0)$. The triple Massey products of consecutive terms vanish because of Lemma \ref{m3vanish} and functoriality (the map $p_*$ is a morphism of $\Ainf$-modules, see Remark \ref{ijp}), so by Lemma \ref{masseyprod} the differential $d_2$ is zero. More in general, the $\ztwo$ summand in position $(2+2n, 3n)$ is generated by
\begin{equation*}
Q^2\otimes (Q^2\otimes Q)^{\otimes n}\otimes Q^2
\end{equation*}
so again its differential $d_2$ is zero by Lemma \ref{masseyprod} and the fact that $\langle Q,Q^2,Q\rangle$ vanishes for dimensional reasons. This implies that the differential $d_3$ is non zero on both the bottom groups in the configuration described above.
\end{proof}

\begin{proof}[Proof of Corollary \ref{nonformal}]
A non vanishing Massey product is $\langle Q^2,Q,Q^2,Q\rangle$(which is $V$). This follows from the fact that the $\ztwo$ summand of bidegree $(3,-1)$ is generated by $Q\otimes Q\otimes Q^2\otimes Q\otimes Q$. It follows from Lemma \ref{masseyprod} that $d_3$ of this class is
\begin{equation*}
\langle Q^2,Q,Q^2,Q\rangle\otimes Q^2+Q^2\otimes \langle Q,Q^2,Q,Q^2\rangle.
\end{equation*}
As this has to be non zero, the fourfold Massey product $\langle Q^2,Q,Q^2,Q\rangle$ (which is well defined as an actual homology class) is non zero, hence the product $\langle Q^2,Q,Q^2,Q\rangle $ is $V$ in $\HSb_{\bullet}(Y)$ by functoriality. On the other hand the computation of this only relies on reducible solutions, so the same result holds for $S^3$.
\end{proof}

\vspace{0.5cm}
We conclude by discussing the $\Rin$-module structure. We know that the element $Q^2\otimes Q^2\otimes Q\otimes Q^2$ is a generator of the bottom element of the middle tower, so the action of $Q$ and $V$ on it are the respectively the generators of the $\ztwo$ summands in bidegree $(0,1)$ and $(0,-2)$. This are respectively given by $[V\otimes Q^2]=[Q^2\otimes V]$ and $[V\otimes QV]=[QV\otimes V]$.
\par
The action of $Q$ readily follows from Lemma \ref{moduleEM} and the non vanishing of the Massey $(2,3)$-byproduct $\langle Q,Q^2,Q,Q^2\rangle$. The last fact follows by functoriality from the non-vanishing of $\mu_4(Q,Q^2,Q,Q^2)$, as when dealing uniquely with reducibles we can identify the moduli spaces involved in the construction.
\par
For action of $V$ we cannot rely on Lemma \ref{moduleEM} or its generalizations as the class $V$ has non-zero product with $Q^2$. On the other hand we can understand the action in a similar way. On the $E^1$-page the action of $V$ sends $Q^2\otimes Q^2\otimes Q\otimes Q^2$ to $Q^2V\otimes Q^2\otimes Q\otimes Q^2$. This element is $d_1$ of $QV\otimes Q\otimes Q^2\otimes Q\otimes Q^2$. As the triple Massey products of each consecutive triple genuinely vanishes, the same staircase argument of Lemma \ref{masseyprod} lets us identify the action of $V$ on the class we started with with $d_4$ of this class, which is given by $QV\otimes \langle Q,Q^2,Q,Q^2\rangle$, hence the result.

\vspace{1cm}
\bibliographystyle{alpha}
\bibliography{biblio}

\begin{thebibliography}{KMOS07}

\bibitem[BB]{BB}
John Baldwin and Jonathan Bloom.
\newblock {T}he monopole category.
\newblock {\em in preparation}.

\bibitem[BD95]{BD}
P.~J. Braam and S.~K. Donaldson.
\newblock Floer's work on instanton homology, knots and surgery.
\newblock In {\em The {F}loer memorial volume}, volume 133 of {\em Progr.
  Math.}, pages 195--256. Birkh\"auser, Basel, 1995.

\bibitem[Blo13]{Blo}
Jonathan Bloom.
\newblock The combinatorics of {M}orse theory with boundary.
\newblock {\em Proceedings of 19th Gokova Geometry-Topology Conference}, pages
  44--88, 2013.

\bibitem[BMO]{BMO}
Jonathan Bloom, Tomasz Mrowka, and Peter Ozsvath.
\newblock Connected sums in monopole {F}loer homology.
\newblock {\em in preparation}.

\bibitem[Cul14]{Cul}
Lucas Culler.
\newblock 3-manifolds {M}odulo {S}urgery {T}riangles.
\newblock {\em Preprint}, arXiv:math/1410.3755, 2014.

\bibitem[Fur90]{Fur}
Mikio Furuta.
\newblock Homology cobordism group of homology {$3$}-spheres.
\newblock {\em Invent. Math.}, 100(2):339--355, 1990.

\bibitem[Kel01]{Kel}
Bernhard Keller.
\newblock Introduction to {$A$}-infinity algebras and modules.
\newblock {\em Homology Homotopy Appl.}, 3(1):1--35, 2001.

\bibitem[KLS16]{KLS}
Tirasan Khandhawit, Jianfeng Lin, and Hirofumi Sasahira.
\newblock Unfolded {S}eiberg-{W}itten {F}loer spectra, {I}: {D}efinition and
  invariance.
\newblock {\em Preprint}, arXiv:math/1604.08240, 2016.

\bibitem[KM07]{KM}
Peter Kronheimer and Tomasz Mrowka.
\newblock {\em Monopoles and three-manifolds}, volume~10 of {\em New
  Mathematical Monographs}.
\newblock Cambridge University Press, Cambridge, 2007.

\bibitem[KMOS07]{KMOS}
P.~Kronheimer, T.~Mrowka, P.~Ozsv{\'a}th, and Z.~Szab{\'o}.
\newblock Monopoles and lens space surgeries.
\newblock {\em Ann. of Math. (2)}, 165(2):457--546, 2007.

\bibitem[Lin15a]{Lin}
Francesco Lin.
\newblock {A} {M}orse-{B}ott approach monopole {F}loer homology and the
  {T}riangulation {C}onjecture.
\newblock {\em preprint}, arXiv:math/1404.4561, 2015.

\bibitem[Lin15b]{Lin2}
Francesco Lin.
\newblock {T}he surgery exact triangle in {P}in(2)-monopole {F}loer homology.
\newblock {\em preprint}, arXiv:math/1504.01993, 2015.

\bibitem[Lin16]{Lin3}
Francesco Lin.
\newblock {L}ectures on monopole {F}loer homology.
\newblock {\em To appear in the Proceedings of the G\"okova Geometry and
  Topology conference}, 2016.

\bibitem[LOT14]{LOT}
Robert Lipshitz, Peter Ozsv{\'a}th, and Dylan~P. Thurston.
\newblock Notes on bordered {F}loer homology.
\newblock In {\em Contact and symplectic topology}, volume~26 of {\em Bolyai
  Soc. Math. Stud.}, pages 275--355. J\'anos Bolyai Math. Soc., Budapest, 2014.

\bibitem[Man13a]{Man3}
Ciprian Manolescu.
\newblock The {C}onley index, gauge theory, and triangulations.
\newblock {\em J. Fixed Point Theory Appl.}, 13(2):431--457, 2013.

\bibitem[Man13b]{man2}
Ciprian Manolescu.
\newblock {P}in(2)-equivariant {S}eiberg-{W}itten {F}loer homology and the
  {T}riangulation {C}onjecture.
\newblock {\em preprint}, arXiv:math/1303.2354, 2013.

\bibitem[McC01]{McC}
John McCleary.
\newblock {\em A user's guide to spectral sequences}, volume~58 of {\em
  Cambridge Studies in Advanced Mathematics}.
\newblock Cambridge University Press, Cambridge, second edition, 2001.

\bibitem[OS03]{OSd}
Peter Ozsv{\'a}th and Zolt{\'a}n Szab{\'o}.
\newblock Absolutely graded {F}loer homologies and intersection forms for
  four-manifolds with boundary.
\newblock {\em Adv. Math.}, 173(2):179--261, 2003.

\bibitem[OS04]{OS2}
Peter Ozsv{\'a}th and Zolt{\'a}n Szab{\'o}.
\newblock Holomorphic disks and three-manifold invariants: properties and
  applications.
\newblock {\em Ann. of Math. (2)}, 159(3):1159--1245, 2004.

\bibitem[Sei08]{Sei}
Paul Seidel.
\newblock {\em Fukaya categories and {P}icard-{L}efschetz theory}.
\newblock Zurich Lectures in Advanced Mathematics. European Mathematical
  Society (EMS), Z\"urich, 2008.

\bibitem[Sto15]{Sto2}
Matthew Stoffregen.
\newblock Manolescu invariants of connected sums.
\newblock {\em preprint}, 2015.

\bibitem[Val12]{Val}
Bruno Vallette.
\newblock Algebra + {Homotopy}={O}perad.
\newblock {\em Preprint}, arXiv:math/1202.3245, 2012.

\end{thebibliography}

\end{document}